\documentclass[11pt]{article}

\usepackage[letterpaper,margin=1in]{geometry}
\usepackage[T1]{fontenc}
\usepackage[utf8]{inputenc}
\usepackage{amsmath,amssymb,amsthm,mathtools}
\usepackage{newpxtext}
\usepackage{newpxmath}
\usepackage{microtype}
\usepackage[dvipsnames]{xcolor}
\usepackage{aliascnt}
\usepackage[hypertexnames=false]{hyperref}
\usepackage[nameinlink,noabbrev]{cleveref}

\hypersetup{
  colorlinks=true,
  linkcolor=NavyBlue,
  citecolor=ForestGreen,
  urlcolor=NavyBlue,
  pdftitle={Extremal Spanning Trees in Product Grid Graphs},
  pdfauthor={}
}

\numberwithin{equation}{section}

\newtheorem{theorem}{Theorem}[section]

\newaliascnt{corollary}{theorem}
\newtheorem{corollary}[corollary]{Corollary}
\aliascntresetthe{corollary}

\newaliascnt{conjecture}{theorem}
\newtheorem{conjecture}[conjecture]{Conjecture}
\aliascntresetthe{conjecture}

\newaliascnt{lemma}{theorem}
\newtheorem{lemma}[lemma]{Lemma}
\aliascntresetthe{lemma}

\newaliascnt{proposition}{theorem}
\newtheorem{proposition}[proposition]{Proposition}
\aliascntresetthe{proposition}

\theoremstyle{definition}
\newaliascnt{definition}{theorem}
\newtheorem{definition}[definition]{Definition}
\aliascntresetthe{definition}

\theoremstyle{remark}
\newaliascnt{remark}{theorem}
\newtheorem{remark}[remark]{Remark}
\aliascntresetthe{remark}

\crefname{theorem}{Theorem}{Theorems}
\crefname{corollary}{Corollary}{Corollaries}
\crefname{conjecture}{Conjecture}{Conjectures}
\crefname{lemma}{Lemma}{Lemmas}
\crefname{proposition}{Proposition}{Propositions}
\crefname{definition}{Definition}{Definitions}
\crefname{remark}{Remark}{Remarks}
\Crefname{theorem}{Theorem}{Theorems}
\Crefname{corollary}{Corollary}{Corollaries}
\Crefname{conjecture}{Conjecture}{Conjectures}
\Crefname{lemma}{Lemma}{Lemmas}
\Crefname{proposition}{Proposition}{Propositions}
\Crefname{definition}{Definition}{Definitions}
\Crefname{remark}{Remark}{Remarks}

\DeclareMathOperator{\arcosh}{arcosh}
\DeclareMathOperator{\arsinh}{arsinh}
\DeclareMathOperator{\Spec}{Spec}
\DeclareMathOperator{\dist}{dist}
\DeclareMathOperator{\Tr}{Tr}
\newcommand{\Z}{\mathbb{Z}}
\newcommand{\R}{\mathbb{R}}
\newcommand{\dd}{\,d}
\newcommand{\Path}{P}
\newcommand{\Cycle}{C}

\newcommand{\eps}{\varepsilon}

\title{\bfseries Extremal Spanning Trees in Product Grid Graphs}
\author{Jiechen Zhang\footnote{EPFL\@. Email: \href{mailto:jiechen.zhang@epfl.ch}{jiechen.zhang@epfl.ch}.}}
\date{}

\begin{document}
\maketitle

\begin{abstract}
We study how fixed-volume spanning-tree extremality changes when product-grid boundary factors are free, periodic, or mixed. In two dimensions, extremality depends sharply on the boundary type. The free/free and periodic/periodic products both obey a closest-to-square principle: among fixed-area rectangles, $P_r\square P_s$ and $C_r\square C_s$ are maximized by the closest-to-square admissible side lengths. The mixed free/periodic cylinder $P_r\square C_s$ is different: closest-to-square fails, and in the divisor-rich case the optimizing cyclic circumference has scale $N^{1/3}$ when the area is $N=rs$. In arbitrary dimension we prove pairwise balancing theorems for pure free products and pure periodic products, and then strengthen them by a heat-trace Schur-concavity theorem in logarithmic side lengths. At perfect-power volume this gives the unique maximizers $P_n^{\square d}$ and, for $n\ge3$, $C_n^{\square d}$. These product-grid comparisons motivate perfect-power conjectures for connected induced lattice subgraphs and periodic analogues.
\end{abstract}

\clearpage

\section{Introduction}

Let $\tau(G)$ denote the number of spanning trees of a finite connected graph $G$.  Throughout, a path factor $\Path_r$ is a free boundary factor, while a cycle factor $\Cycle_r$ is a periodic boundary factor.  Thus $\Path_r\square\Path_s$ is the basic rectangular grid, $\Cycle_r\square\Cycle_s$ is the torus, and $\Path_r\square\Cycle_s$ is the cylinder.  The central question is: among product grids of fixed volume, which side lengths maximize $\tau$, and how does the answer depend on the boundary factors?

The answer is boundary-sensitive even in two dimensions.  The free/free case was settled quantitatively by the rectangular-grid balancing theorem of Zhang~\cite{Zhang2026Balancing}: among equal-area rectangular grids, moving the side lengths toward a square gives a sharp linear lower bound for the logarithmic spanning-tree gain, with optimal constant $\arsinh(1)$, and the closest-to-square divisor rectangle uniquely maximizes $\tau$ up to rotation.  We study what survives under periodic and mixed boundary conditions, and extend the theory to higher-dimensional pure products.

For periodic/periodic products, square-balancing survives: fixed-area tori $\Cycle_r\square\Cycle_s$ are also maximized by the closest-to-square admissible divisor pair.  For mixed free/periodic products, it fails.  At area $36$, $\tau(\Path_9\square\Cycle_4)>\tau(\Path_6\square\Cycle_6)$, and at area $120$, $\tau(\Path_{20}\square\Cycle_6)>\tau(\Path_{12}\square\Cycle_{10})$.  The cylinder is governed instead by a competition between a free-boundary penalty and a periodic finite-size correction, and the long-cylinder scale developed below is not $N^{1/2}$ but $N^{1/3}$ for the periodic circumference.

This gives a finite-size classification for the symmetric boundary cases and an exact/asymptotic theory for the mixed cylinder case.  The sharp constants distinguish the boundary conditions.  Rectangles have the best positive linear side-sum constant $\alpha=\arsinh(1)$.  Tori balance strictly but have no positive universal linear side-sum constant: the best such constant is $0$.  Cylinders have an exact free-boundary penalty $-\log\sinh(\alpha s)$ in the cyclic circumference $s$; the same $\alpha$ is sharp, but it competes with a periodic correction of order $N/s^2$ rather than with another side length.

In higher dimensions the main balancing results concern the two pure products
\[
  \Path_{\mathbf n}=\Path_{n_1}\square\cdots\square\Path_{n_d}
  \quad\text{and}\quad
  \Cycle_{\mathbf n}=\Cycle_{n_1}\square\cdots\square\Cycle_{n_d},
\]
We prove that every nontrivial product-preserving balancing of two coordinates strictly increases the spanning-tree count.  Thus fixed-volume maximizers are pairwise balanced.  For general $N$ and $d\ge3$, terminal pairwise-balanced divisor tuples need not be unique, so this is a finite reduction rather than a universal closest-to-hypercube uniqueness theorem.  This arithmetic obstruction can occur even at a perfect power: the tuple $(50,54,64,75)$ has product $60^4$ and is already pairwise balanced.
The heat-trace method gives a stronger global comparison.  After extending the one-dimensional heat traces from integer side lengths to positive real side lengths, they become strictly log-convex in logarithmic side length.  Karamata's inequality then gives a Schur-concavity theorem for $\log\tau$ in logarithmic side lengths: among pure path products, and among pure cycle products with all lengths at least $3$, a tuple whose logarithmic side lengths are more balanced has more spanning trees.  The perfect-power theorem is the uniform special case.

\subsection{Our Contributions}

We give a boundary-condition extremal theory for spanning trees in product grid graphs with free and periodic boundary factors.  The main contributions are the following.

\begin{enumerate}
\setlength{\itemsep}{1pt}
\setlength{\parsep}{0pt}
\setlength{\topsep}{2pt}
\item[(1)] \emph{Boundary-condition classification in two dimensions} (\textbf{\Cref{sec:two-dimensional-products,sec:rectangles-tori,sec:cylinders}}).  The free/free and periodic/periodic products are closest-to-square extremal, while the mixed free/periodic cylinder is not.  Thus changing one boundary factor changes the extremal shape.
\item[(2)] \emph{Torus and cylinder exact theory} (\textbf{\Cref{thm:torus-exact-gain,prop:torus-no-linear-constant,prop:cylinder-exact-gain,prop:cylinder-sharp-alpha,thm:cylinder-real-height,thm:cylinder-divisor-rich}}).  For tori we prove an exact balancing-gain identity and show that the best universal positive linear side-sum constant is $0$.  For cylinders we isolate the exact free-boundary penalty with sharp coefficient $\arsinh(1)$ and derive the $N^{1/3}$ cyclic-circumference scale in the divisor-rich case.
\item[(3)] \emph{Arbitrary-dimensional pure-boundary products} (\textbf{\Cref{thm:D-balancing,thm:E-balancing,thm:path-higher,thm:cycle-higher,cor:pairwise-maximizers}}).  Massive determinant inequalities imply that every nontrivial product-preserving balancing of two coordinates strictly increases $\tau$ for pure free products and pure periodic products.  Consequently, fixed-volume maximizers are pairwise balanced; uniqueness follows when the terminal tuple is unique.
\item[(4)] \emph{Schur-concavity by heat traces} (\textbf{\Cref{prop:perfect-power-obstruction,thm:log-majorization,thm:perfect-power-heat}}).  Pairwise balancing alone cannot prove perfect-power uniqueness, as shown by the pairwise terminal tuple $(50,54,64,75)$ at volume $60^4$.  A heat-trace convexity argument using real side lengths and Karamata's inequality gives a logarithmic majorization principle, hence Schur-concavity of $\log\tau$ in logarithmic side lengths for pure path and pure cycle products.
\end{enumerate}

\subsection{Related Work}

The product formulas used here go back to Kirchhoff's Matrix--Tree Theorem~\cite{Kirchhoff1847} and the classical spectral analysis of lattice spanning trees~\cite{Temperley1974,Wu1977,ShrockWu2000}.  Tzeng and Wu obtained closed-form hypercubic formulas under free, periodic, and mixed boundary conditions~\cite{TzengWu2000}, and Golin, Leung, Wang, and Yong surveyed transfer-matrix formulas for grids, cylinders, and tori~\cite{GolinLeungWangYong2005}.  Izmailian and Kenna express square-lattice spanning-tree partition functions under free, cylindrical, toroidal, M\"obius, and Klein-bottle boundary conditions in terms of twisted principal partition functions and derive exact finite-size expansions~\cite{IzmailianKennaGuoWu2014,IzmailianKenna2015}.  Periodic grids and discrete tori have also been studied through zeta, heat-kernel, and circulant methods~\cite{ChintaJorgensonKarlsson2010,ChintaJorgensonKarlsson2012,Louis2015}.

The asymptotic formulas below recover standard finite-size results.  The two-dimensional product formulas are special cases of the Tzeng--Wu formulas for hypercubic lattices with free, periodic, and mixed boundary conditions~\cite{TzengWu2000}.  The torus bounded-aspect formula recovers the two-dimensional rectangular-torus determinant, or complexity, asymptotic of Chinta, Jorgenson, and Karlsson~\cite{ChintaJorgensonKarlsson2010,ChintaJorgensonKarlsson2012}; the torus and cylinder expansions also match the corresponding boundary-condition corrections of Izmailian and Kenna~\cite{IzmailianKenna2015}.

The free/free baseline is the rectangular-grid balancing theorem of Zhang~\cite{Zhang2026Balancing}, which gives the sharp fixed-area comparison for products with two free factors.  Procaccia and Tucker-Foltz recorded a two-dimensional square-maximality conjecture for connected induced grid subgraphs~\cite{ProcacciaTuckerFoltz2022}; the product-extremal conjectures in \Cref{sec:discussion} are higher-dimensional product-grid analogues.  Tapp's boundary-sensitive bounds for square-lattice subgraphs give broader extremal context for how boundary size affects spanning-tree counts~\cite{Tapp2024}, but they do not imply the exact finite product-grid comparisons proved here.

We use standard majorization terminology, including Karamata's inequality and Schur-convexity/concavity~\cite{MarshallOlkinArnold2011}; in \Cref{sec:higher-dimensional} it is applied to logarithmic side lengths.

\paragraph{Organization.}
\Cref{sec:preliminaries} records spectra, Matrix--Tree normalization, and the one-dimensional polynomial identities.  \Cref{sec:two-dimensional-products} derives the product formulas for rectangles, cylinders, and tori.  \Cref{sec:periodic-trapezoids} proves the trapezoidal identities and monotonicity estimates used in the periodic arguments.  \Cref{sec:rectangles-tori} treats the two symmetric boundary cases.  \Cref{sec:cylinders} proves the cylinder normal form, counterexamples to square balancing, bounded-aspect asymptotics, and the $N^{1/3}$ circumference result.  \Cref{sec:massive} proves the massive determinant inequalities.  \Cref{sec:higher-dimensional} applies them to pure products and proves Schur-concavity and the perfect-power result by heat traces.  \Cref{sec:discussion} discusses the scope of the methods, mixed boundary products, and product-extremal conjectures.

\section{Preliminaries}\label{sec:preliminaries}

For $r\ge1$ define $\lambda_i^P(r)=2-2\cos(\pi i/r)$ for $0\le i\le r-1$, and for $r\ge3$ define $\lambda_i^C(r)=2-2\cos(2\pi i/r)$ for $0\le i\le r-1$.  These are the Laplacian spectra of $\Path_r$ and $\Cycle_r$, respectively.  The Laplacian spectrum of a Cartesian product is the multiset of all sums of eigenvalues from the factors.  Kirchhoff's Matrix--Tree Theorem~\cite{Kirchhoff1847} gives
\[
  |V(G)|\,\tau(G)=\prod_{\lambda\in\Spec^+(G)}\lambda,
\]
where $\Spec^+(G)$ denotes the positive Laplacian eigenvalues counted with multiplicity.

\begin{lemma}\label{lem:oned-products}
For $r\ge1$, $\prod_{i=1}^{r-1}\lambda_i^P(r)=r$, with the empty product convention when $r=1$.  For $r\ge3$, $\prod_{i=1}^{r-1}\lambda_i^C(r)=r^2$.
\end{lemma}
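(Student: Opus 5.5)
The plan is to derive both identities from the factorization of $z^n-1$ over the $n$th roots of unity, or, equivalently, from the Matrix--Tree Theorem already recorded above applied to a path and a cycle. The second route is the shortest, so I would use it. Note that $\lambda_0^P(r)=0$ and $\lambda_0^C(r)=0$, and that the remaining eigenvalues are positive: for the path, $0<\pi i/r<\pi$ when $1\le i\le r-1$; for the cycle, $0<2\pi i/r<2\pi$. Hence $\Spec^+(\Path_r)=\{\lambda_i^P(r)\}_{i=1}^{r-1}$ and $\Spec^+(\Cycle_r)=\{\lambda_i^C(r)\}_{i=1}^{r-1}$, counted with multiplicity.

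Since $\Path_r$ is a tree, $\tau(\Path_r)=1$, so Kirchhoff's identity $|V|\tau=\prod\lambda$ gives $\prod_{i=1}^{r-1}\lambda_i^P(r)=r\cdot1=r$. When $r=1$, both sides reduce to the empty-product convention. For $r\ge3$, the spanning trees of $\Cycle_r$ are obtained by deleting exactly one of its $r$ edges, so $\tau(\Cycle_r)=r$ and $\prod_{i=1}^{r-1}\lambda_i^C(r)=r\cdot r=r^2$.

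The one point needing care is that these spectra are stated rather than derived in the paper, so the argument relies on the paper's claim that they are the Laplacian spectra. If a self-contained trigonometric check is preferred, I would write
\[
2-2\cos\theta=|1-e^{i\theta}|^2 .
\]
For the cycle this gives $\prod_{i=1}^{r-1}|1-\omega^i|^2$ with $\omega=e^{2\pi i/r}$, and since $\prod_{i=1}^{r-1}(z-\omega^i)=1+z+\cdots+z^{r-1}$, evaluating at $z=1$ yields $r^2$.

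For the path I would use the $2r$th roots of unity with $\zeta=e^{\pi i/r}$. The product $\prod_{j=1}^{2r-1}|1-\zeta^j|=2r$ splits into the terms $j$ and $2r-j$ for $1\le j\le r-1$, which pair up as conjugates, together with the single term $j=r$, which contributes $|1-(-1)|=2$. This gives $\prod_{i=1}^{r-1}|1-\zeta^i|^2=2r/2=r$. This pairing is the only step requiring any bookkeeping.
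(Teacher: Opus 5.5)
Your proof is correct, and your main route differs from the paper's.

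The paper treats the lemma as a pure trigonometric identity. It writes $\lambda_i^P(r)=\bigl(2\sin(\pi i/(2r))\bigr)^2$ and $\lambda_i^C(r)=\bigl(2\sin(\pi i/r)\bigr)^2$, then quotes the classical products $\prod_{i=1}^{r-1}2\sin(\pi i/(2r))=\sqrt r$ and $\prod_{i=1}^{r-1}2\sin(\pi i/r)=r$.

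You instead apply Kirchhoff's identity backwards to the one-dimensional factors, reading the products off as $r\,\tau(\Path_r)=r$ and $r\,\tau(\Cycle_r)=r^2$. Your check that only the $i=0$ eigenvalue vanishes is what identifies $\Spec^+$ with multiplicities, and the case $r=1$ is consistent. This is the shortest proof. It also explains why, in \Cref{prop:two-dimensional-formulas}, the zero-coordinate factors cancel the Kirchhoff prefactor: they are just $|V|\tau$ of the factor graphs. The cost is that it takes as input the identification of these numbers as the Laplacian spectra and the eigenvalue form of the Matrix--Tree theorem, both of which the paper only asserts. The paper's route needs neither, since it is a statement about the explicit numbers $2-2\cos\theta$ with no graph involved.

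Your fallback computation is the paper's route made self-contained. Since $|1-e^{i\theta}|=2|\sin(\theta/2)|$, your cyclotomic evaluation $\prod_{i=1}^{r-1}(1-\omega^i)=r$ proves the first sine product the paper cites. Likewise, your pairing of the $2r$th roots of unity, with the unpaired factor $|1-\zeta^r|=2$, proves the second one.
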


\begin{proof}
The path identity follows from $\prod_{i=1}^{r-1}2\sin(\pi i/(2r))=\sqrt r$, and the cycle identity from $\prod_{i=1}^{r-1}2\sin(\pi i/r)=r$.  Since the path and cycle Laplacian eigenvalues are the squares of the corresponding sine factors, these identities give the displayed products.
\end{proof}

We shall use two one-dimensional characteristic polynomial identities.

\begin{lemma}\label{lem:path-cycle-polynomials}
Let $z>0$ and $x=2\cosh z-2$.  Then
\[
  \prod_{i=1}^{r-1}\bigl(x+\lambda_i^P(r)\bigr)
  =
  \frac{\sinh(rz)}{\sinh z},
  \qquad r\ge1,
\]
and
\[
  \prod_{i=1}^{r-1}\bigl(x+\lambda_i^C(r)\bigr)
  =
  \left[
  \frac{\sinh(rz/2)}{\sinh(z/2)}
  \right]^2,
  \qquad r\ge3.
\]
Moreover,
\[
  \prod_{i=0}^{r-1}\bigl(x+\lambda_i^C(r)\bigr)
  =
  4\sinh^2(rz/2).
\]
\end{lemma}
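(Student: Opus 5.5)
The plan is to prove the third (full cycle) identity first and deduce the second from it, then treat the path case by a parallel argument. The common tool is the product identity
\[
  \prod_{k=0}^{m-1}\Bigl(w^2-2w\cos\tfrac{2\pi k}{m}+1\Bigr)=(w^m-1)^2,
\]
which follows from factoring $w^m-1=\prod_k(w-\omega^k)$ over the $m$th roots of unity $\omega^k$ and multiplying by the complex conjugate factorization.

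\emph{Cycle, all eigenvalues.} With $x=2\cosh z-2$ we have $x+\lambda_i^C(r)=2\cosh z-2\cos(2\pi i/r)=e^{-z}\bigl(e^{2z}-2e^{z}\cos(2\pi i/r)+1\bigr)$. Take $w=e^z$ and $m=r$. The product over $0\le i\le r-1$ is then $e^{-rz}(e^{rz}-1)^2=(e^{rz/2}-e^{-rz/2})^2=4\sinh^2(rz/2)$. This argument needs only $r\ge1$, so the restriction $r\ge3$ is harmless.

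\emph{Cycle, nonzero eigenvalues.} Divide the previous result by the $i=0$ factor, which is $x=2\cosh z-2=4\sinh^2(z/2)$. This gives $[\sinh(rz/2)/\sinh(z/2)]^2$ immediately. Division is legitimate because $z>0$ ensures $\sinh(z/2)\neq0$.

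\emph{Path.} Here the factors are $2\cosh z-2\cos(\pi i/r)$ for $1\le i\le r-1$. I would reuse the root-of-unity identity with $m=2r$, writing
\[
  \prod_{k=0}^{2r-1}\bigl(2\cosh z-2\cos(\pi k/r)\bigr)=e^{-2rz}(e^{2rz}-1)^2=4\sinh^2(rz).
\]
The terms $k$ and $2r-k$ coincide, so this full product equals $F_0\,F_r\,\prod_{i=1}^{r-1}F_i^2$, where $F_k=2\cosh z-2\cos(\pi k/r)$. The two unpaired terms are $F_0=4\sinh^2(z/2)$ and $F_r=2\cosh z+2=4\cosh^2(z/2)$, so $F_0F_r=4\sinh^2 z$. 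Hence
\[
  \Bigl(\prod_{i=1}^{r-1}F_i\Bigr)^2=\frac{\sinh^2(rz)}{\sinh^2 z}.
\]
Every $F_i$ is positive, so taking positive square roots gives $\sinh(rz)/\sinh z$. The case $r=1$ gives the empty product $1$, which is consistent.

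\emph{Main obstacle.} The only delicate point is the bookkeeping in the path case: correctly pairing $k$ with $2r-k$, isolating the unpaired terms $k=0$ and $k=r$, and justifying the square root by positivity. An alternative would be a Chebyshev $U_{r-1}$ argument, via the tridiagonal recurrence $D_r=(x+2)D_{r-1}-D_{r-2}$ for the characteristic polynomial of the path Laplacian with its zero mode removed. That route seems more cumbersome, so I would use the doubling trick.
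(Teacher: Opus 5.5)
Your proof is correct. For the two cycle identities it is essentially the paper's argument. The paper quotes $\prod_{i=0}^{r-1}\bigl(2\cosh z-2\cos(2\pi i/r)\bigr)=2\cosh(rz)-2$ as a standard identity and then divides by the $i=0$ factor. You derive that identity from the factorization of $w^r-1$ over the $r$th roots of unity, and then divide in the same way.

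The genuine difference is the path identity. The paper uses the three-term recurrence $q_{r+1}=(x+2)q_r-q_{r-1}$, with $q_1=1$ and $q_2=x+2$. This identifies $q_r(x)=\prod_{i=1}^{r-1}(x+\lambda_i^P(r))$ with a Chebyshev polynomial of the second kind in $(x+2)/2$, and the paper then substitutes $x=2\cosh z-2$. You instead place the path frequencies $\pi k/r$ on the $2r$-point cycle grid. You pair $F_k$ with $F_{2r-k}$, remove the unpaired factors $F_0F_r=4\sinh^2 z$, and take a positive square root, which your positivity remark correctly justifies.

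What your route buys:
\begin{itemize}
\item It is self-contained: all three identities reduce to one cyclotomic factorization. The paper only asserts the recurrence for the root product. Justifying it implicitly requires knowing that the zeros of the degree-$(r-1)$ Chebyshev polynomial of the second kind are $\cos(\pi i/r)$, $1\le i\le r-1$.
\item It is the same ``path as half of a doubled cycle'' device the paper itself uses later. It appears in \Cref{lem:massive-trap}, where the half-weighted path sum equals $U_{2s}^{C}(m)/(2s)$, and in \Cref{lem:fractional-path-heat}.
\end{itemize}

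What the paper's recurrence route buys:
\begin{itemize}
\item It gives a polynomial identity in $x$ directly, with no positivity argument or choice of square-root sign.
\item It is exactly the recursion the paper reuses for the exact integer evaluations in \Cref{prop:cylinder-counterexamples}.
\end{itemize}
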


\begin{proof}
For paths, the monic polynomial $q_r(x)=\prod_{i=1}^{r-1}(x+\lambda_i^P(r))$ satisfies the Chebyshev recurrence $q_{r+1}(x)=(x+2)q_r(x)-q_{r-1}(x)$, with $q_1(x)=1$ and $q_2(x)=x+2$.  Substituting $x=2\cosh z-2$ gives $q_r(x)=\sinh(rz)/\sinh z$.

For cycles, the standard identity
\[
  \prod_{i=0}^{r-1}\left(2\cosh z-2\cos\frac{2\pi i}{r}\right)
  =
  2\cosh(rz)-2
  =
  4\sinh^2(rz/2)
\]
gives the product including $i=0$.  Dividing by the $i=0$ factor $x=4\sinh^2(z/2)$ gives the positive-mode formula.
\end{proof}

Set
\[
  c(x)=\arcosh(2-\cos\pi x)=2\arsinh\left(\sin\frac{\pi x}{2}\right),
  \qquad
  d(x)=\arcosh(2-\cos2\pi x)=2\arsinh|\sin\pi x|,
\]
for $0\le x\le1$.
We write $\alpha=\arsinh(1)$.  Thus $c(1)=2\alpha$, $d(1/2)=2\alpha$, $d(x)=d(1-x)$, and $d$ is increasing on $[0,1/2]$.

\begin{lemma}\label{lem:G}
Let $t>1$ and
\[
  G_t(y)=\log\frac{1-e^{-2ty}}{1-e^{-2y}},
  \qquad y>0.
\]
Then $G_t(y)>0$ and $G_t$ is strictly decreasing.
\end{lemma}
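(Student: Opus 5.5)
Positivity is immediate. Since $t>1$ and $y>0$ we have $2ty>2y>0$, so $e^{-2ty}<e^{-2y}<1$. Hence $0<1-e^{-2y}<1-e^{-2ty}$, the ratio inside the logarithm exceeds $1$, and $G_t(y)>0$.

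For monotonicity I will differentiate and reduce the claim to a one-variable monotonicity statement. Write $G_t(y)=\log(1-e^{-2ty})-\log(1-e^{-2y})$. Then
\[
  G_t'(y)=\frac{2t\,e^{-2ty}}{1-e^{-2ty}}-\frac{2e^{-2y}}{1-e^{-2y}}
  =\frac{1}{y}\bigl(\phi(2ty)-\phi(2y)\bigr),
  \qquad \phi(u)=\frac{u}{e^{u}-1}.
\]
Because $2ty>2y>0$, it suffices to show that $\phi$ is strictly decreasing on $(0,\infty)$. Then $\phi(2ty)<\phi(2y)$, so $G_t'(y)<0$ for every $y>0$, and $G_t$ is strictly decreasing.

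For $\phi$, we have
\[
  \phi'(u)=\frac{e^u-1-ue^u}{(e^u-1)^2},
\]
so we need $h(u)=e^u-1-ue^u<0$ for $u>0$. Since $h(0)=0$ and $h'(u)=-ue^u<0$ for $u>0$, $h$ is strictly negative on $(0,\infty)$. Hence $\phi$ is strictly decreasing there.

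An equivalent route is to write $G_t(y)=\int_{2y}^{2ty}\frac{du}{u}\,\phi(u)\cdot\frac{1}{1}$-type representations, but the derivative argument above is cleaner and is the one I will use.

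I expect no real obstacle. The only step needing care is rewriting $G_t'$ in terms of the single function $\phi$ evaluated at $2ty$ and $2y$. The factor $t$ must be absorbed correctly: $2t e^{-2ty}/(1-e^{-2ty})=\phi(2ty)/y$.
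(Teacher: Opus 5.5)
Your proof is correct and follows the same route as the paper: positivity comes from $e^{-2ty}<e^{-2y}$, and monotonicity from $G_t'(y)=y^{-1}\bigl(\phi(2ty)-\phi(2y)\bigr)$ with $\phi(u)=u/(e^u-1)$ strictly decreasing, which you check directly via $e^u-1-ue^u<0$ where the paper instead cites monotonicity of $(e^u-1)/u$. The garbled aside about an integral representation is unused and should be deleted, although the identity $G_t(y)=\int_{2y}^{2ty}\dd u/(e^u-1)$ it gestures at is in fact true.
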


\begin{proof}
Positivity is immediate from $t>1$.  Differentiating gives $G_t'(y)=2t/(e^{2ty}-1)-2/(e^{2y}-1)$, equivalently
\[
  G_t'(y)=\frac1y\left(\frac{2ty}{e^{2ty}-1}-\frac{2y}{e^{2y}-1}\right).
\]
The function $(e^u-1)/u$ is strictly increasing on $(0,\infty)$, so its reciprocal $u/(e^u-1)$ is strictly decreasing.  Since $2ty>2y$, this gives $G_t'(y)<0$.
\end{proof}

\section{Two-Dimensional Product Formulas}\label{sec:two-dimensional-products}

The following formulas are the unweighted two-dimensional specializations with
two free factors, one free and one periodic factor, and two periodic factors
of the hypercubic boundary-condition products of Tzeng and Wu~\cite{TzengWu2000}.  We record
them in a form adapted to the later balancing arguments, after the one-
dimensional zero-mode factors have been separated and evaluated.

\begin{proposition}\label{prop:two-dimensional-formulas}
For $r,s\ge1$,
\[
  \tau(\Path_r\square\Path_s)
  =
  \prod_{i=1}^{r-1}\prod_{j=1}^{s-1}
  \bigl(\lambda_i^P(r)+\lambda_j^P(s)\bigr).
\]
For $r\ge1$ and $s\ge3$,
\[
  \tau(\Path_r\square\Cycle_s)
  =
  s\prod_{k=1}^{s-1}
  \frac{\sinh(r d(k/s))}{\sinh d(k/s)}.
\]
For $r,s\ge3$,
\[
  \tau(\Cycle_r\square\Cycle_s)
  =
  rs
  \prod_{k=1}^{s-1}
  \left[
  \frac{\sinh\left(\frac r2 d(k/s)\right)}
  {\sinh\left(\frac12 d(k/s)\right)}
  \right]^2.
\]
\end{proposition}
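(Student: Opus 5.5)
The plan is to apply the Matrix--Tree normalization $|V(G)|\,\tau(G)=\prod_{\lambda\in\Spec^+(G)}\lambda$ to each product. The product spectrum is the multiset of sums $\lambda_i+\mu_j$ of factor eigenvalues. The only zero eigenvalue is the pair $(0,0)$, so the positive spectrum splits into three pieces: the mixed pairs with $i,j\ge1$, the pairs with $i=0$, $j\ge1$, and the pairs with $i\ge1$, $j=0$. The last two pieces contribute the one-dimensional products $\prod_{j\ge1}\mu_j$ and $\prod_{i\ge1}\lambda_i$, which \Cref{lem:oned-products} evaluates. The mixed piece will be handled by organizing it by the periodic index $k$ and applying \Cref{lem:path-cycle-polynomials} at a suitable $z$.

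\emph{Free/free.} The two boundary pieces give $s\cdot r$ by \Cref{lem:oned-products}. This cancels $|V|=rs$ and leaves exactly the double product over $i,j\ge1$.

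\emph{Free/periodic.} Here I would use the periodic index $k$ as the outer variable and include $k=0$ in the inner organization. For fixed $k\ge1$, set $z_k=d(k/s)$. The definition of $d$ gives $2\cosh z_k-2=\lambda_k^C(s)$, and $z_k>0$ for $1\le k\le s-1$. The full product over the path modes $i=0,\dots,r-1$ is then $\lambda_k^C(s)\prod_{i\ge1}(\lambda_k^C(s)+\lambda_i^P(r))$. By \Cref{lem:path-cycle-polynomials} this equals $\lambda_k^C(s)\sinh(rz_k)/\sinh z_k$. The $k=0$ column contributes $\prod_{i\ge1}\lambda_i^P(r)=r$. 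Multiplying over $k$ and using $\prod_{k\ge1}\lambda_k^C(s)=s^2$ gives $|V|\tau = r\,s^2\prod_k \sinh(rz_k)/\sinh z_k$. Dividing by $rs$ yields the stated formula.

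\emph{Periodic/periodic.} The same organization applies, with the inner factor now $\Cycle_r$. For fixed $k\ge1$ I would use the third identity of \Cref{lem:path-cycle-polynomials} with $z=z_k$ and the cycle length $r$, which gives $\prod_{i=0}^{r-1}(\lambda_k^C(s)+\lambda_i^C(r))=4\sinh^2(rz_k/2)$. Writing $4\sinh^2(z_k/2)=2\cosh z_k-2=\lambda_k^C(s)$, this becomes $\lambda_k^C(s)\,[\sinh(rz_k/2)/\sinh(z_k/2)]^2$. The $k=0$ column contributes $r^2$, and the factors $\lambda_k^C(s)$ multiply to $s^2$. Hence $|V|\tau=r^2s^2\prod_k[\cdots]^2$, and dividing by $rs$ gives the formula.

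The main obstacle is bookkeeping rather than analysis: keeping the zero modes straight in each case, and confirming that $z_k=d(k/s)$ satisfies $2\cosh z_k-2=\lambda_k^C(s)$. This last point follows from $\cosh d(x)=2-\cos 2\pi x$. I would also check the degenerate case $r=1$ in the free/periodic formula, where it should reduce to $\tau(\Cycle_s)=s$. The product is then $s\cdot 1$, which is consistent, and the empty-product convention of \Cref{lem:oned-products} handles it.
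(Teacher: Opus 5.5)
Your proposal is correct and follows essentially the same route as the paper: Matrix--Tree normalization, evaluation of the zero-coordinate pieces by \Cref{lem:oned-products}, and \Cref{lem:path-cycle-polynomials} at $z=d(k/s)$ for the mixed modes. The only difference is cosmetic: in the torus case you apply the full-cycle identity including $i=0$ and then divide out $\lambda_k^C(s)=4\sinh^2(z_k/2)$, which is exactly how the paper derives the positive-mode identity it cites.
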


\begin{proof}
The spectra of the products are the pairwise sums of the one-dimensional spectra.  For $\Path_r\square\Path_s$, Kirchhoff's theorem gives a prefactor $(rs)^{-1}$.  The factors with one zero coordinate contribute
\[
  \prod_{i=1}^{r-1}\lambda_i^P(r)\prod_{j=1}^{s-1}\lambda_j^P(s)=rs
\]
by \Cref{lem:oned-products}, canceling the Kirchhoff prefactor.

For $\Path_r\square\Cycle_s$, the one-dimensional factors contribute
\[
  \prod_{i=1}^{r-1}\lambda_i^P(r)\prod_{k=1}^{s-1}\lambda_k^C(s)=rs^2.
\]
After division by $rs$, a factor $s$ remains.  For each $1\le k\le s-1$,
\[
  \lambda_k^C(s)=2\cosh d(k/s)-2,
\]
and the path polynomial identity in \Cref{lem:path-cycle-polynomials} gives the displayed product.

For $\Cycle_r\square\Cycle_s$, the one-dimensional factors contribute $r^2s^2$, and Kirchhoff leaves the prefactor $rs$.  The cycle polynomial identity with $z=d(k/s)$ gives the mixed product.
\end{proof}

\section{Periodic Trapezoidal Sums}\label{sec:periodic-trapezoids}

Let
\[
  \gamma(\theta)=\arcosh(2-\cos\theta),\qquad \theta\in\R,
\]
which is even and $2\pi$-periodic.  On $0\le\theta\le\pi$ it agrees with $d(\theta/2\pi)$.
Also let
\[
  I=\int_0^1d(x)\dd x=\frac1\pi\int_0^\pi\gamma(\theta)\dd\theta.
\]
For $s\ge1$ put
\[
  S_s=\sum_{k=1}^{s-1}d(k/s).
\]

\begin{lemma}\label{lem:periodic-trap}
For every $s\ge1$,
\[
  \frac{S_s}{s}
  =
  I+
  \frac{2}{\pi s}
  \int_0^\pi
  \log(1-e^{-s\gamma(\phi)})\dd\phi.
\]
Consequently $S_s/s$ is increasing in $s$, and
\[
  S_s=sI-\frac{\pi}{3s}+O(s^{-3}).
\]
\end{lemma}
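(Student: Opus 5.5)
Using the cycle identity in \Cref{lem:path-cycle-polynomials}, I will turn the sum $S_s$ into an integral in a "dual" variable. The key observation is that $d(k/s)=\gamma(2\pi k/s)$, and $\gamma$ is exactly the function with $2\cosh\gamma(\theta)=4-2\cos\theta$. Fix $\phi\in[0,\pi]$ and set $z=\gamma(\phi)$, so $x=2\cosh z-2=2-2\cos\phi$. The cycle identity including the zero mode gives
\[
  \prod_{k=0}^{s-1}\Bigl(2-2\cos\phi+2-2\cos\tfrac{2\pi k}{s}\Bigr)=4\sinh^2\bigl(s\gamma(\phi)/2\bigr).
\]
The left side is symmetric in the roles of $\phi$ and $2\pi k/s$. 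Writing each factor as $2\cosh\gamma(2\pi k/s)-2\cos\phi$ and applying the classical identity $\frac1\pi\int_0^\pi\log(2\cosh a-2\cos\phi)\dd\phi=a$ for $a\ge0$ then yields
\[
  \frac1\pi\int_0^\pi \log\bigl(4\sinh^2(s\gamma(\phi)/2)\bigr)\dd\phi=\sum_{k=0}^{s-1}\gamma(2\pi k/s)=S_s,
\]
since the $k=0$ term vanishes. Next I expand $\log(4\sinh^2(u/2))=u+2\log(1-e^{-u})$ with $u=s\gamma(\phi)$. The first term integrates to $sI$ by the definition of $I$. Dividing by $s$ gives the stated formula with coefficient $\frac{2}{\pi s}$. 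The only delicate point here is integrability at $\phi=0$, where $\gamma(0)=0$ and $\log(1-e^{-s\gamma})\sim\log(s\phi)$; this singularity is integrable, and the classical identity holds even at $a=0$.

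For monotonicity, I rewrite the correction as $\frac{2}{\pi}\int_0^\pi \frac{1}{s}\log(1-e^{-s\gamma})\dd\phi$. For fixed $y=\gamma>0$, I claim that $s\mapsto \frac1s\log(1-e^{-sy})$ is increasing. Substituting $u=sy$, it equals $y\cdot u^{-1}\log(1-e^{-u})$, and $h(u)=u^{-1}\log(1-e^{-u})$ is negative. Its derivative is
\[
  h'(u)=\frac{u/(e^u-1)-\log(1-e^{-u})}{u^2},
\]
and both terms in the numerator are positive because $\log(1-e^{-u})<0$. So $h$ is increasing, and integrating in $\phi$ gives that $S_s/s$ is increasing. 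Alternatively, this follows from \Cref{lem:G}-type reasoning.

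The asymptotic is the main obstacle. I will study $J_s=\int_0^\pi\log(1-e^{-s\gamma(\phi)})\dd\phi$, which is dominated by $\phi=O(1/s)$; away from $0$ the integrand is exponentially small. Near $0$, $\gamma(\phi)=\phi\sqrt{1-\phi^2/12+\cdots}$, more precisely an odd analytic expansion $\gamma=\phi-\phi^3/24+O(\phi^5)$. I will change variables to $w=\gamma(\phi)$, so $\phi=\gamma^{-1}(w)$ with $d\phi=(1+c_2w^2+O(w^4))\dd w$, where $\gamma^{-1}$ is an odd analytic function. This gives
\[
  J_s=\int_0^{\infty}\log(1-e^{-sw})\bigl(1+c_2w^2+\cdots\bigr)\dd w+O(e^{-cs}).
\]
The leading term is $\frac1s\int_0^\infty\log(1-e^{-u})\dd u=-\frac{\pi^2}{6s}$. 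This produces $S_s=sI+\frac{2}{\pi}\bigl(-\frac{\pi^2}{6s}\bigr)+\cdots=sI-\frac{\pi}{3s}+\cdots$, as claimed. The next term is $c_2 s^{-3}\int u^2\log(1-e^{-u})\dd u$, which after the factor $\frac{2}{\pi}$ is $O(s^{-3})$ in $S_s$. The even-power structure of $d\phi/dw$ rules out an $s^{-2}$ term. I will justify truncating the Taylor remainder by splitting the integral at $w=\delta$ and bounding the tail exponentially.
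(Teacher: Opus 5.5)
Your proposal is correct and is essentially the paper's argument. Both proofs evaluate the same symmetric kernel $\log(4-2\cos\theta-2\cos\phi)$: the paper writes $\gamma(2\pi k/s)$ as a $\phi$-integral and sums over the grid by Fourier filtering, which is your chain of equalities (the cycle product identity plus $\frac1\pi\int_0^\pi\log(2\cosh a-2\cos\phi)\dd\phi=a$) read in the opposite order, and your parity argument for $(\gamma^{-1})'$ does the job of the paper's estimate $\gamma(\phi)=\phi+O(\phi^3)$.

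There are two harmless slips. First, $\gamma(\phi)=2\arsinh(\sin(\phi/2))=\phi-\phi^3/12+O(\phi^5)$, not $\phi-\phi^3/24$, but you only use that $\gamma$ is odd with $\gamma'(0)=1$. Second, for $s=1,2$ you should cite $\prod_{k=0}^{s-1}(2\cosh z-2\cos(2\pi k/s))=2\cosh(sz)-2$ directly, since the paper defines $\lambda^C$ only for $s\ge3$.
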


\begin{proof}
For $a\ge1$,
\[
  \frac1\pi\int_0^\pi\log(a-\cos\phi)\dd\phi
  =
  \log\frac{a+\sqrt{a^2-1}}{2}.
\]
With $a=2-\cos\theta$ this gives
\[
  \gamma(\theta)=\log2+
  \frac1\pi\int_0^\pi\log(2-\cos\theta-\cos\phi)\dd\phi.
\]
Fix $\phi>0$ and write $q=e^{-\gamma(\phi)}$.  Then
\[
  2-\cos\theta-\cos\phi
  =
  \frac{1-2q\cos\theta+q^2}{2q}.
\]
The Fourier expansion
\[
  \log(1-2q\cos\theta+q^2)
  =
  -2\sum_{m\ge1}\frac{q^m}{m}\cos(m\theta)
\]
is filtered by the periodic grid $\theta=2\pi k/s$: only multiples of $s$ survive.  Hence
\[
  \frac1s\sum_{k=0}^{s-1}\gamma(2\pi k/s)
  =
  I+\frac{2}{\pi s}\int_0^\pi\log(1-e^{-s\gamma(\phi)})\dd\phi.
\]
The $k=0$ summand is zero, so the left side is $S_s/s$.

For fixed $z>0$, the function $p^{-1}\log(1-e^{-pz})$ is increasing in $p>0$.  The identity therefore implies that $S_s/s$ increases with $s$.

It remains to expand the integral.  Choose $\delta>0$ small.  On $[\delta,\pi]$ the function $\gamma$ is bounded below by a positive constant, so the contribution is $O(e^{-cs})$.  On $[0,\delta]$ we have $\gamma(\phi)=\phi+O(\phi^3)$ and $\gamma(\phi)\asymp\phi$.  With $u=s\phi$ this gives
\[
  s\gamma(u/s)=u+O(u^3/s^2),
  \qquad 0\le u\le \delta s.
\]
Moreover
\[
  \left|
  \log(1-e^{-s\gamma(u/s)})-\log(1-e^{-u})
  \right|
  \le
  \frac{C u^3}{s^2(e^{cu}-1)},
\]
which is integrable in $u$ on $[0,\infty)$.  After the change of variables $d\phi=du/s$, this majorant gives an $O(s^{-3})$ error, and replacing the truncated interval $[0,\delta s]$ by $[0,\infty)$ changes the integral only by an exponentially small tail.  Therefore
\[
  \int_0^\pi\log(1-e^{-s\gamma(\phi)})\dd\phi
  =
  \frac1s\int_0^\infty\log(1-e^{-u})\dd u+O(s^{-3}).
\]
The standard integral equals $-\pi^2/6$, which gives the expansion.  The constants in this estimate are absolute; this is the uniform input used later in the fixed-area cylinder expansion.
\end{proof}

We shall also need massive variants.  For $m>0$ define
\[
  \gamma_m(\theta)=\arcosh\left(2+\frac m2-\cos\theta\right),
  \qquad \theta\in\R.
\]
Set
\[
  U_s^{C}(m)=\sum_{k=0}^{s-1}\gamma_m(2\pi k/s),
  \qquad
  U_s^{P}(m)=\sum_{k=0}^{s-1}\gamma_m(\pi k/s).
\]

\begin{lemma}\label{lem:massive-trap}
For each $m>0$, both normalized sums $U_s^{C}(m)/s$ for $s\ge1$ and $U_s^{P}(m)/s$ for $s\ge1$ are increasing in $s$.
\end{lemma}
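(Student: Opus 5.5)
I will mirror the proof of \Cref{lem:periodic-trap}: write each trapezoidal sum as the continuum integral plus an explicit correction that is manifestly increasing in $s$.

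\emph{Integral representation.} Apply the identity $\frac1\pi\int_0^\pi\log(a-\cos\phi)\dd\phi=\log\frac{a+\sqrt{a^2-1}}{2}$ with $a=2+\frac m2-\cos\theta\ge1+\frac m2>1$. This gives
\[
  \gamma_m(\theta)=\log2+\frac1\pi\int_0^\pi\log\Bigl(2+\tfrac m2-\cos\theta-\cos\phi\Bigr)\dd\phi .
\]
Fix $\phi$ and set $q=e^{-\gamma_m(\phi)}$, where now $\gamma_m(\phi)\ge\arcosh(1+m/2)>0$ for every $\phi$, including $\phi=0$. Then
\[
  2+\tfrac m2-\cos\theta-\cos\phi=\frac{1-2q\cos\theta+q^2}{2q},
\]
and the Fourier expansion $\log(1-2q\cos\theta+q^2)=-2\sum_{n\ge1}\frac{q^n}{n}\cos(n\theta)$ converges absolutely and uniformly.

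\emph{Cyclic sum.} Average over $\theta=2\pi k/s$, $0\le k\le s-1$. Only the modes $n=js$ survive, and the $\log 2$ and $-\log(2q)$ constants combine into the mean $I_m=\frac1\pi\int_0^\pi\gamma_m$. This gives
\[
  \frac{U_s^C(m)}{s}=I_m+\frac{1}{\pi s}\int_0^\pi\log\bigl(1-e^{-s\gamma_m(\phi)}\bigr)\dd\phi\cdot 2,
\]
which is exactly as in \Cref{lem:periodic-trap}. Since $p\mapsto p^{-1}\log(1-e^{-pz})$ is increasing for fixed $z>0$, the claim follows. Here the integrand is bounded, so there are no integrability issues.

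\emph{Path sum.} The nodes $\theta=\pi k/s$ for $0\le k\le s-1$ form half of the grid $2\pi k/(2s)$, $0\le k\le 2s-1$. This is the step I expect to need care, because the node set is not symmetric: it omits $\theta=\pi$ but includes $\theta=0$.

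\emph{First approach: symmetrize the path sum.} Using evenness of $\gamma_m$, the full $2s$-grid sum is $2U_s^P(m)-\gamma_m(0)+\gamma_m(\pi)$. Hence
\[
  U_s^P(m)=\tfrac12 U_{2s}^C(m)+\tfrac12\bigl(\gamma_m(0)-\gamma_m(\pi)\bigr),
\]
and therefore
\[
  \frac{U_s^P(m)}{s}=\frac{U_{2s}^C(m)}{2s}-\frac{\gamma_m(\pi)-\gamma_m(0)}{2s}.
\]
The first term is increasing by the cyclic case. The second term is $-(\text{positive constant})/s$, since $\gamma_m$ is increasing on $[0,\pi]$, so it is increasing as well. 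The sum of the two is therefore increasing, which completes the path case.

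\emph{Second approach: filter the Fourier series directly.} This serves as a check on the first. Summing $\cos(n\pi k/s)$ over $0\le k\le s-1$ gives $s$ when $2s\mid n$, gives $1$ when $n$ is odd, and gives $0$ otherwise. This produces the same decomposition, with an extra $s^{-1}\times(\text{constant})$ term whose sign must be checked. The symmetrization in the first approach avoids that sign check, so I would present the first approach.
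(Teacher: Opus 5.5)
Your proposal is correct and is essentially the paper's proof. The cyclic case uses the same Fourier expansion and roots-of-unity filtering to get $U_s^{C}(m)/s=I_m+\frac{2}{\pi s}\int_0^\pi\log\bigl(1-e^{-s\gamma_m(\phi)}\bigr)\dd\phi$, and your identity $U_s^{P}(m)/s=U_{2s}^{C}(m)/(2s)-(\gamma_m(\pi)-\gamma_m(0))/(2s)$ is exactly the paper's half-weighted sum $T_s(m)=U_{2s}^{C}(m)/(2s)$ plus the correction $(\gamma_m(0)-\gamma_m(\pi))/(2s)$, which is increasing in $s$.
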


\begin{proof}
The cycle identity is exactly the proof of \Cref{lem:periodic-trap} with $\gamma$ replaced by $\gamma_m$.  Indeed,
\[
  2+\frac m2-\cos\theta-\cos\phi
  =
  \cosh\gamma_m(\phi)-\cos\theta,
\]
so, with $q=e^{-\gamma_m(\phi)}$, the same Fourier expansion and periodic-grid filtering give
\[
  \frac{U_s^{C}(m)}{s}
  =
  I_m+
  \frac{2}{\pi s}\int_0^\pi\log(1-e^{-s\gamma_m(\phi)})\dd\phi,
\]
where $I_m=\pi^{-1}\int_0^\pi\gamma_m(\phi)\dd\phi$.  Since $\gamma_m(\phi)>0$, monotonicity follows from the same pointwise monotonicity in $s$.

For paths, let
\[
  T_s(m)=\frac1s\left(
  \frac{\gamma_m(0)+\gamma_m(\pi)}2+
  \sum_{k=1}^{s-1}\gamma_m(\pi k/s)\right).
\]
Equivalently, by the symmetry $\gamma_m(2\pi-\phi)=\gamma_m(\phi)$, this half-weighted sum is the normalized cycle sum on the $2s$-point grid: $T_s(m)=U_{2s}^{C}(m)/(2s)$.
Substituting $2s$ into the cycle identity therefore gives
\[
  T_s(m)=I_m+
  \frac{1}{\pi s}\int_0^\pi\log(1-e^{-2s\gamma_m(\phi)})\dd\phi.
\]
Also
\[
  \frac{U_s^{P}(m)}s=T_s(m)+\frac{\gamma_m(0)-\gamma_m(\pi)}{2s}.
\]
The integral term is increasing in $s$, and the final term is increasing because $\gamma_m(0)<\gamma_m(\pi)$.  Hence $U_s^P(m)/s$ is increasing.
\end{proof}

\section{Rectangles and Tori}\label{sec:rectangles-tori}

\subsection{Rectangles with Two Free Factors}

We use the rectangular-grid theorem in the following form.

\begin{theorem}[{\cite[Theorem~1.2]{Zhang2026Balancing}}]\label{thm:rectangle-balancing}
Let $A,B,a,b$ be positive integers satisfying
\[
  AB=ab,\qquad A\le a\le b\le B.
\]
Then
\[
  \log\tau(\Path_a\square\Path_b)-\log\tau(\Path_A\square\Path_B)
  \ge
  \arsinh(1)\bigl((A+B)-(a+b)\bigr).
\]
If the rectangles are distinct, the inequality is strict.  The universal constant $\arsinh(1)$ is optimal.
\end{theorem}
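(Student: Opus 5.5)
The plan is to write $\log\tau$ of a rectangle as an extensive bulk term, plus an exact boundary term $-\alpha(a+b)$, plus a remainder that balancing does not decrease. Fix the smaller side in the spectral variable. By \Cref{prop:two-dimensional-formulas}, for each $1\le j\le b-1$ we write $\lambda_j^P(b)=2\cosh c(j/b)-2$. The path polynomial identity of \Cref{lem:path-cycle-polynomials} then gives
\[
  \tau(\Path_a\square\Path_b)=\prod_{j=1}^{b-1}\frac{\sinh(a\,c(j/b))}{\sinh c(j/b)}.
\]
Using $\log\sinh(u)=u-\log2+\log(1-e^{-2u})$, we obtain
\[
  \log\tau(\Path_a\square\Path_b)
  = a\sum_{j=1}^{b-1}c(j/b)-\sum_{j=1}^{b-1}\log\bigl(2\sinh c(j/b)\bigr)
  +\sum_{j=1}^{b-1}\log\bigl(1-e^{-2a\,c(j/b)}\bigr).
\]

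Next we treat $\sum_j c(j/b)$ by a trapezoidal identity of the type in \Cref{lem:periodic-trap}. Since $c(x)=\gamma(\pi x)$, the half-weighted sum over the grid $\pi j/b$ equals the cycle sum on the $2b$-point grid, exactly as in the proof of \Cref{lem:massive-trap} with $m=0$. Subtracting the endpoint half-weights $c(0)/2=0$ and $c(1)/2=\alpha$ gives
\[
  \sum_{j=1}^{b-1}c(j/b)=bI-\alpha+\frac1\pi\int_0^\pi\log\bigl(1-e^{-2b\gamma(\phi)}\bigr)\dd\phi .
\]
Multiplying by $a$ produces $NI-\alpha a$, so the bulk term $NI$ is common to all rectangles of area $N=ab$. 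The term $-\alpha b$ should come from the second sum, $-\sum_j\log(2\sinh c(j/b))$, which is again a Riemann sum with explicit endpoint behaviour. I would evaluate it exactly using $2\sinh c(x)=2\sin(\pi x/2)\sqrt{\sin^2(\pi x/2)+1}\cdot 2$ and the sine-product identities of \Cref{lem:oned-products}. The result should be $-\alpha b$ plus a term $R(b)$ that is bounded and explicitly controlled.

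This yields a normal form
\[
  \log\tau(\Path_a\square\Path_b)=NI-\alpha(a+b)+\Phi(a,b),
\]
where $\Phi$ collects the exponentially-weighted integral terms, the $\log(1-e^{-2ac})$ sum, and $R(b)$. The theorem then reduces to showing $\Phi(a,b)\ge\Phi(A,B)$ when $A\le a\le b\le B$ and $ab=AB$, with strict inequality when the rectangles differ.

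This monotonicity is the main obstacle. I would first symmetrize $\Phi$, which is possible since $\tau$ is symmetric in $a$ and $b$. Then I would show that $\Phi$ is increasing along the fixed-area curve as the smaller side grows. The tool is \Cref{lem:G}: differences of the form $\log(1-e^{-2ty})-\log(1-e^{-2y})$ are positive and decreasing, so each correction is dominated by its value at the more elongated shape. The delicate range is $A=1$ or $A$ very small. There the asymptotic corrections are not small, and I expect a finite direct check to be needed, using $\tau(\Path_1\square\Path_N)=1$ and the explicit formulas for $A=2,3$.

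Strictness comes from the strict monotonicity in \Cref{lem:G}. For optimality of $\arsinh(1)$, I would take families with $A,a\to\infty$ and $B/A$, $b/a$ bounded. In that regime $\Phi$ converges to a shape-dependent finite limit, so its differences stay bounded while $(A+B)-(a+b)\to\infty$. Hence the ratio of the logarithmic gain to the side-sum difference tends to $\alpha$, and no larger constant can work.
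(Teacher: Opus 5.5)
There is a genuine gap. Your normal form is essentially right. However, $\log\tau(\Path_a\square\Path_b)=NI-\alpha(a+b)+\Phi(a,b)$ is an identity, so the inequality $\Phi(a,b)\ge\Phi(A,B)$ is just a restatement of the theorem. All the content lies in that step, and your plan for it does not work as described.

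\begin{itemize}
\item In your decomposition, $\Phi(a,b)$ and $\Phi(A,B)$ live on different spectral grids ($j/b$ versus $j/B$) with different multipliers. So the quantities $G_t(y)=\log\frac{1-e^{-2ty}}{1-e^{-2y}}$ of \Cref{lem:G} never actually arise: there is no common $y$ at which to form such a ratio.
\item The proposed ``finite direct check'' for small $A$ is impossible. For $A=1$ or $A=2$ the claim must hold for every $B$, which is an infinite family.
\item $R(b)$ is not bounded. Using $\prod_{j=1}^{b-1}(4+\lambda_j^P(b))=\sinh(2\alpha b)/\sinh(2\alpha)$ one finds $R(b)=-\tfrac12\log b+\tfrac54\log2-\tfrac12\log(1-e^{-4\alpha b})$. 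Hence $\Phi$ does not converge in bounded aspect. Only its fixed-area differences stay bounded, which is all your optimality argument actually needs.
\end{itemize}

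The missing idea is to compare both rectangles with the intermediate rectangle $\Path_A\square\Path_b$. In each step, take the spectral variable along the side that does not change. The paper imports this theorem from \cite{Zhang2026Balancing}, but it uses exactly this mechanism for the massive analogue in \Cref{thm:D-balancing} and for tori in \Cref{thm:torus-exact-gain}. With $t=a/A=B/b$ this gives the exact identity
\[
  \log\frac{\tau(\Path_a\square\Path_b)}{\tau(\Path_A\square\Path_B)}
  =\sum_{k=1}^{b-1}H_t\bigl(A\,c(k/b)\bigr)-\sum_{j=1}^{A-1}H_t\bigl(b\,c(j/A)\bigr),
  \qquad H_t(y)=(t-1)y+G_t(y).
\]
Here your $R$ terms and the bulk terms cancel, because each comparison stays on a single grid.

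\begin{itemize}
\item Your trapezoidal identity turns the linear part into exactly $\alpha\bigl((A+B)-(a+b)\bigr)$ plus
\[
(t-1)\frac{Ab}{\pi}\int_0^\pi\bigl[\tfrac1b\log(1-e^{-2b\gamma})-\tfrac1A\log(1-e^{-2A\gamma})\bigr]\dd\phi .
\]
This extra term is nonnegative because $p^{-1}\log(1-e^{-pz})$ is increasing in $p$, as in \Cref{lem:periodic-trap}.
\item The residual $\sum_kG_t(Ac(k/b))-\sum_jG_t(bc(j/A))$ is positive by matching $j$ to $k_j=\lfloor jb/A\rfloor$. Since $c$ is increasing and $A\le b$, we get $Ac(k_j/b)\le bc(j/A)$. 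Since $G_t$ is decreasing, each matched term dominates its partner. The $b-A$ unmatched terms are positive.
\end{itemize}

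This argument works uniformly in $A$ (for $A=1$ the $j$-sum is empty), with no case checking. It also gives strictness whenever $t>1$.
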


\begin{corollary}\label{cor:rectangle-square}
Among divisor rectangles $\Path_r\square\Path_s$ with $rs=N$, the closest-to-square divisor pair uniquely maximizes $\tau$, up to rotation.
\end{corollary}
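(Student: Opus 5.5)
The corollary follows from \Cref{thm:rectangle-balancing} by comparing every divisor rectangle with the closest-to-square one. Since $\Path_r\square\Path_s\cong\Path_s\square\Path_r$, we may normalize every divisor pair as $r\le s$ with $rs=N$. Let $(a,b)$ be the closest-to-square pair, meaning $a$ is the largest divisor of $N$ with $a\le\sqrt N$ and $b=N/a$.

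Now take any other normalized divisor pair $(A,B)$ with $AB=N$ and $A\le B$. Since $A\le\sqrt N$ and $A$ divides $N$, maximality of $a$ gives $A\le a$. Then $B=N/A\ge N/a=b$. Also $a\le\sqrt N\le b$, so the hypotheses $AB=ab$ and $A\le a\le b\le B$ of \Cref{thm:rectangle-balancing} hold.

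The theorem then yields
\[
  \log\tau(\Path_a\square\Path_b)-\log\tau(\Path_A\square\Path_B)\ge\arsinh(1)\bigl((A+B)-(a+b)\bigr).
\]
The right side is nonnegative, since $x+N/x$ is decreasing on $(0,\sqrt N]$ and $A\le a$. If $(A,B)\ne(a,b)$, the two rectangles are distinct even up to rotation, because both pairs are normalized with first entry at most the second. The strict case of the theorem therefore gives $\tau(\Path_A\square\Path_B)<\tau(\Path_a\square\Path_b)$. This proves that the closest-to-square pair is the unique maximizer up to rotation.

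No step here is a real obstacle; the only point needing care is the reduction to normalized pairs. That reduction is what makes ``distinct'' in the theorem match ``distinct up to rotation'' in the corollary.
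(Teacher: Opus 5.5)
Your proof is correct and follows the route the paper intends. The paper gives no separate proof of this corollary because it is immediate from \Cref{thm:rectangle-balancing}; its proof of the analogous \Cref{cor:torus-square} uses the same comparison, except that it compares any two admissible divisors $d_1<d_2\le\sqrt N$ directly, which gives slightly more (strict increase of $\tau$ in the smaller side), whereas you compare every normalized pair against the closest-to-square one.
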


\subsection{Tori with Two Periodic Factors}

\subsubsection*{Exact gain identities}

\begin{theorem}\label{thm:torus-exact-gain}
Let
\[
  AB=ab,\qquad 3\le A\le a\le b\le B.
\]
Set $t=a/A=B/b$.  If $t=1$, the two tori are identical.  If $t>1$, then
\[
  \log\frac{\tau(\Cycle_a\square\Cycle_b)}
  {\tau(\Cycle_A\square\Cycle_B)}
  =
  (t-1)(AS_b-bS_A)+2\Gamma_T,
\]
where
\[
  \Gamma_T=
  \sum_{k=1}^{b-1}G_t\left(\frac A2d(k/b)\right)
  -
  \sum_{j=1}^{A-1}G_t\left(\frac b2d(j/A)\right).
\]
\end{theorem}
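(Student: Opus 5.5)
We express both torus counts through \Cref{prop:two-dimensional-formulas}, choosing for each torus the orientation that leads to the stated identity. Write $\sinh(x)=\tfrac12 e^{x}(1-e^{-2x})$. Then for $r,s\ge3$,
\[
  \log\tau(\Cycle_r\square\Cycle_s)=\log(rs)+2\sum_{k=1}^{s-1}\Bigl[\tfrac{r-1}{2}d(k/s)+\log\bigl(1-e^{-r d(k/s)}\bigr)-\log\bigl(1-e^{-d(k/s)}\bigr)\Bigr].
\]
This simplifies to
\[
  \log(rs)+(r-1)S_s+2\sum_{k}\log\frac{1-e^{-rd(k/s)}}{1-e^{-d(k/s)}}.
\]
The point is to choose orientations so that the ratio of the two exponential corrections is exactly a $G_t$. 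Since $\Cycle_a\square\Cycle_b\cong\Cycle_b\square\Cycle_a$, we may sum over whichever factor we like.

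For the numerator $\tau(\Cycle_a\square\Cycle_b)$ we sum over $k$ modulo $b$ with $r=a=tA$. For the denominator $\tau(\Cycle_A\square\Cycle_B)$ we sum over the $A$ side, so $r=B=tb$. The prefactors $\log(ab)$ and $\log(AB)$ cancel.

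The main terms give
\[
  (a-1)S_b-(B-1)S_A=(tA-1)S_b-(tb-1)S_A.
\]
We compare this with $(t-1)(AS_b-bS_A)=(tA-A)S_b-(tb-b)S_A$. The discrepancy is $(A-1)S_b-(b-1)S_A$. This must be absorbed by the correction terms, and it is exactly the torus identity with $t=1$ relating the same torus computed in two orientations.

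To close this, we introduce the intermediate torus $\Cycle_A\square\Cycle_b$. Computing it in both orientations and equating the results gives
\[
  (A-1)S_b+2\sum_{k=1}^{b-1}\log\frac{1-e^{-Ad(k/b)}}{1-e^{-d(k/b)}}=(b-1)S_A+2\sum_{j=1}^{A-1}\log\frac{1-e^{-bd(j/A)}}{1-e^{-d(j/A)}}.
\]

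Next we rewrite the correction terms of the numerator and denominator. Using
\[
  \log\frac{1-e^{-tAd}}{1-e^{-d}}=\log\frac{1-e^{-2t(Ad/2)}}{1-e^{-2(Ad/2)}}+\log\frac{1-e^{-Ad}}{1-e^{-d}}=G_t\Bigl(\tfrac A2 d\Bigr)+\log\frac{1-e^{-Ad}}{1-e^{-d}},
\]
the numerator correction splits as $2\sum_k G_t(\tfrac A2 d(k/b))$ plus the intermediate-torus correction on the $b$ side. In the same way, with $B=tb$, the denominator correction splits as $2\sum_j G_t(\tfrac b2 d(j/A))$ plus the intermediate-torus correction on the $A$ side.

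Subtracting, the two intermediate corrections combine with the discrepancy $(A-1)S_b-(b-1)S_A$. By the two-orientation identity above, this combination vanishes. What remains is $(t-1)(AS_b-bS_A)+2\Gamma_T$. The case $t=1$ is trivial.

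The main obstacle is bookkeeping: choosing opposite orientations for the two tori, and recognizing that the leftover mismatch is exactly the orientation-invariance identity for $\Cycle_A\square\Cycle_b$. The condition $A,b\ge3$ guarantees that this intermediate torus is admissible in \Cref{prop:two-dimensional-formulas}. The remainder is a routine algebraic check of the splitting identity for $G_t$ with $y=\tfrac A2 d$ and $y=\tfrac b2 d$.
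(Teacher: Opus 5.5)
Your proposal is correct and is essentially the paper's argument. Both proofs pivot through the intermediate torus $\Cycle_A\square\Cycle_b$, using the $b$-side product for $\Cycle_a\square\Cycle_b$ and the $A$-side product for $\Cycle_A\square\Cycle_B$. The paper works directly with the ratios $H_t(y)=\log\bigl(\sinh(ty)/\sinh y\bigr)=(t-1)y+G_t(y)$, so the orientation-invariance of $\tau(\Cycle_A\square\Cycle_b)$ is used implicitly, whereas you expand each torus into its normal form first and then invoke that invariance as an explicit cancellation identity.
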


\begin{proof}
The case $t=1$ is immediate.  Assume $t>1$.  The torus product formula gives, after comparing both tori to $\Cycle_A\square\Cycle_b$,
\[
  \log\frac{\tau(\Cycle_a\square\Cycle_b)}
  {\tau(\Cycle_A\square\Cycle_B)}
  =
  2\sum_{k=1}^{b-1}H_t\left(\frac A2d(k/b)\right)
  -
  2\sum_{j=1}^{A-1}H_t\left(\frac b2d(j/A)\right),
\]
because the vertex prefactors contribute $\log(a/A)-\log(B/b)$ in the two comparisons, which is zero since $a/A=B/b=t$.  Here
\[
  H_t(y)=\log\frac{\sinh(ty)}{\sinh y}
  =
  (t-1)y+G_t(y).
\]
Separating the linear and residual parts gives the claimed identity.
\end{proof}

\begin{lemma}\label{lem:torus-residual-matching}
Under the hypotheses of \Cref{thm:torus-exact-gain}, if $t>1$, then $\Gamma_T>0$.
\end{lemma}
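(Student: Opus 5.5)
The plan is a term-by-term matching argument. It uses only that $G_t$ is positive and strictly decreasing (\Cref{lem:G}) and that $d$ is symmetric about $1/2$ and increasing on $[0,1/2]$. The first sum in $\Gamma_T$ has $b-1$ terms and the second has $A-1$ terms, with $A\le b$. So I will construct an injection $\sigma\colon\{1,\dots,A-1\}\to\{1,\dots,b-1\}$ such that
\[
  \frac A2\,d(\sigma(j)/b)\le \frac b2\,d(j/A)\qquad(1\le j\le A-1).
\]
Since $G_t$ is decreasing, each matched term then satisfies $G_t(\frac A2 d(\sigma(j)/b))\ge G_t(\frac b2 d(j/A))$. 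The unmatched terms of the first sum are all positive.

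The matching is defined as follows. For $1\le j\le A/2$ put $\sigma(j)=j$. For $A/2<j\le A-1$ put $\sigma(j)=b-(A-j)$.

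\emph{Injectivity.} The first group of values lies in $[1,A/2]$. The second group lies in $[b-A/2+1,\,b-1]$, and since $A\le b$ we have $b-A/2+1>A/2$, so the two groups are disjoint. Each group is injective on its own, so $\sigma$ is injective.

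\emph{The inequality, first group.} Let $j\le A/2$. Then $j/b\le j/A\le 1/2$, so monotonicity of $d$ on $[0,1/2]$ gives $d(j/b)\le d(j/A)$. Combined with $A\le b$, this yields $A\,d(j/b)\le b\,d(j/A)$.

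\emph{The inequality, second group.} Let $j>A/2$. By symmetry, $d(\sigma(j)/b)=d((A-j)/b)$ and $d(j/A)=d((A-j)/A)$. Moreover $(A-j)/b\le (A-j)/A<1/2$, so the same argument applies.

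Summing over $j$ gives
\[
  \Gamma_T\ \ge\ \sum_{k\notin \sigma(\{1,\dots,A-1\})} G_t\left(\frac A2 d(k/b)\right)\ \ge\ 0,
\]
where the remaining sum has $b-A$ terms, each positive by \Cref{lem:G}.

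For strictness I only need $b>A$ whenever $t>1$. Suppose instead $A=b$. Then the chain $A\le a\le b\le B$ forces $a=A$, hence $t=a/A=1$, a contradiction. So $b>A$, and at least one positive unmatched term remains, giving $\Gamma_T>0$.

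I expect the main obstacle to be choosing a matching that respects the symmetry $d(x)=d(1-x)$. The naive matching $k=j$ fails for $j>A/2$, because there $j/b$ and $j/A$ sit on opposite sides of the peak of $d$. Reflecting the upper half of the indices to the top end of $\{1,\dots,b-1\}$ is what fixes this. Once that is done, the argument uses only monotonicity and needs no quantitative estimates.
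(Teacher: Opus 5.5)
Your proof is correct and is essentially the paper's argument: inject the $A-1$ terms of the second sum into the first, use $d(x)=d(1-x)$ and the monotonicity of $d$ on $[0,1/2]$ to get $\frac A2 d(k_j/b)\le \frac b2 d(j/A)$, apply the monotonicity of $G_t$, and keep the $b-A\ge1$ unmatched positive terms. The only difference is the choice of injection, plus one harmless slip: the paper uses the proportional indices $k_j=\lfloor bj/A\rfloor$, reflected as $b-\lfloor b(A-j)/A\rfloor$ for $j>A/2$, whereas your $\sigma(j)=j$ with reflection is simpler and equally valid; the slip is that for odd $A$ your reflected values start at $b-(A-1)/2$ rather than $b-A/2+1$, but $\sigma(j)>b-A/2\ge A/2$ still gives the disjointness you need.
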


\begin{proof}
Since $t>1$ and $a\le b$, we have $b>A$.  For $1\le j\le A-1$, define
\[
  k_j=
  \begin{cases}
  \lfloor bj/A\rfloor,& j\le A/2,\\
  b-\lfloor b(A-j)/A\rfloor,& j>A/2.
  \end{cases}
\]
These are distinct elements of $\{1,\ldots,b-1\}$.  On the first half they are strictly increasing and lie at or below $b/2$; on the second half they are strictly increasing and lie above $b/2$.  By construction, $\dist(k_j/b,\Z)\le \dist(j/A,\Z)$.  Using $d(x)=d(1-x)$ and the monotonicity of $d$ on $[0,1/2]$, we get $d(k_j/b)\le d(j/A)$, and therefore $(A/2)d(k_j/b)\le (b/2)d(j/A)$.  Since $G_t$ is decreasing, each matched term from the first sum in $\Gamma_T$ is at least the corresponding term from the second sum.  After these $A-1$ matches, the first sum still has $b-A$ unmatched terms, all positive by \Cref{lem:G}.  Hence $\Gamma_T>0$.
\end{proof}

\begin{theorem}\label{thm:torus-balancing}
Let
\[
  AB=ab,\qquad 3\le A\le a\le b\le B.
\]
Then
\[
  \tau(\Cycle_a\square\Cycle_b)>
  \tau(\Cycle_A\square\Cycle_B)
\]
unless the pairs are identical.
\end{theorem}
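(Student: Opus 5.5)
We apply \Cref{thm:torus-exact-gain}. If $t=a/A=1$ the pairs coincide and there is nothing to prove, so assume $t>1$. Then
\[
  \log\frac{\tau(\Cycle_a\square\Cycle_b)}{\tau(\Cycle_A\square\Cycle_B)}
  =
  (t-1)(AS_b-bS_A)+2\Gamma_T .
\]
By \Cref{lem:torus-residual-matching} the residual satisfies $\Gamma_T>0$. It therefore suffices to prove the linear term is nonnegative, that is, $AS_b\ge bS_A$, since $t-1>0$.

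This reduces to comparing normalized trapezoidal sums. Dividing by $Ab>0$, the inequality $AS_b\ge bS_A$ is equivalent to
\[
  \frac{S_b}{b}\ \ge\ \frac{S_A}{A}.
\]
As in the proof of \Cref{lem:torus-residual-matching}, $t>1$ and $a\le b$ give $b\ge a>A$. \Cref{lem:periodic-trap} states that $S_s/s$ is increasing in $s$, so the displayed inequality holds. In fact it is strict: the integrand $s^{-1}\log(1-e^{-s\gamma(\phi)})$ is strictly increasing in $s$ for each $\phi\in(0,\pi]$, since $\gamma(\phi)>0$ there. Strictness is not needed, however, because the strict positivity of $\Gamma_T$ already makes the total gain positive.

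Combining the two parts, the log-ratio is at least $2\Gamma_T>0$. Hence $\tau(\Cycle_a\square\Cycle_b)>\tau(\Cycle_A\square\Cycle_B)$ whenever the pairs differ. The main substantive input is the monotonicity of $S_s/s$, which is already established in \Cref{lem:periodic-trap}. The remaining checks are only that $t>1$ forces $b>A$ and that the admissibility hypothesis $A\ge3$ is exactly what the torus formula and \Cref{thm:torus-exact-gain} require.
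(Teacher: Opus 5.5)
Your proof is correct and follows the paper's own argument: you apply the exact gain identity of \Cref{thm:torus-exact-gain}, obtain $AS_b-bS_A\ge0$ from the monotonicity of $S_s/s$ in \Cref{lem:periodic-trap}, and obtain $\Gamma_T>0$ from \Cref{lem:torus-residual-matching}. Your extra remark that the main term is in fact strictly positive is also correct, but, as you note, it is not needed.
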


\begin{proof}
Let $t=a/A=B/b$.  If $t=1$, the pairs are identical.  If $t>1$, \Cref{thm:torus-exact-gain} gives
\[
  \log\frac{\tau(\Cycle_a\square\Cycle_b)}
  {\tau(\Cycle_A\square\Cycle_B)}
  =
  (t-1)(AS_b-bS_A)+2\Gamma_T.
\]
The periodic trapezoidal monotonicity in \Cref{lem:periodic-trap} gives $AS_b-bS_A\ge0$, and \Cref{lem:torus-residual-matching} gives $\Gamma_T>0$.
\end{proof}

\begin{corollary}\label{cor:torus-square}
Among admissible divisor pairs $rs=N$ with $r,s\ge3$, the closest-to-square torus $\Cycle_r\square\Cycle_s$ uniquely maximizes $\tau$, up to rotation.
\end{corollary}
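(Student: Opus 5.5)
We will derive \Cref{cor:torus-square} from \Cref{thm:torus-balancing} by a comparison-with-the-optimum argument. Since $\Cycle_r\square\Cycle_s\cong\Cycle_s\square\Cycle_r$, we normalize every admissible pair so that $r\le s$. The admissible pairs are then $(r,s)$ with $rs=N$ and $3\le r\le s$, and these are finite in number. If there are none, or only one, the statement is vacuous. Otherwise, let $(a,b)$ be the admissible pair with $a\le b$ and $a$ as large as possible. This is the closest-to-square pair: with the product fixed, $b-a=N/a-a$ is strictly decreasing in $a$, so the largest $a$ minimizes $b-a$, and equivalently it minimizes $a+b$.

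Now take any other admissible pair $(A,B)$ with $3\le A\le B$ and $AB=N$. We need the ordering $A\le a\le b\le B$.

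1. The inequality $A\le a$ holds by the maximality of $a$.
2. The inequality $a\le b$ holds by normalization.
3. For $b\le B$, note that $B=N/A\ge N/a=b$.

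So the hypotheses of \Cref{thm:torus-balancing} hold, with $3\le A$ as required. The theorem then gives $\tau(\Cycle_a\square\Cycle_b)>\tau(\Cycle_A\square\Cycle_B)$ unless $(A,B)=(a,b)$. Every admissible torus other than the rotations of $\Cycle_a\square\Cycle_b$ therefore has strictly fewer spanning trees. This shows both that the closest-to-square pair maximizes $\tau$ and that the maximizer is unique up to rotation.

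There is no real obstacle here. The only points needing care are that the closest-to-square pair is well defined (it is the largest admissible divisor $a\le\sqrt N$ with $N/a\ge3$) and that the nesting $A\le a\le b\le B$ follows from the product constraint, which is the one-line check in step 3.
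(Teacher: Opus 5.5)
Your proof is correct and is essentially the paper's argument: both reduce the corollary to \Cref{thm:torus-balancing} by deriving the nesting $3\le A\le a\le b\le B$ from the fixed product $N$. The only difference is cosmetic: the paper compares any two admissible smaller divisors $d_1<d_2\le\sqrt N$, which shows $\tau$ strictly increases with the smaller side, whereas you compare each competitor directly with the pair having the largest admissible $a\le\sqrt N$.
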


\begin{proof}
If $d_1<d_2\le\sqrt N$ are admissible divisors, set $A=d_1$, $B=N/d_1$, $a=d_2$, and $b=N/d_2$.  Then $3\le A<a\le b<B$, so \Cref{thm:torus-balancing} applies.
\end{proof}

\subsubsection*{Torus asymptotics and the absence of a linear boundary constant}

For $r,s\ge3$ put $R_{r,s}=\sum_{k=1}^{s-1}\log(1-e^{-r d(k/s)})$.

\begin{lemma}\label{lem:torus-normal-form}
For $r,s\ge3$,
\[
  \log\tau(\Cycle_r\square\Cycle_s)
  =
  rS_s+\log(r/s)+2R_{r,s}.
\]
\end{lemma}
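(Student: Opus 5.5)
Starting point: the torus product formula of \Cref{prop:two-dimensional-formulas},
\[
  \tau(\Cycle_r\square\Cycle_s)
  =
  rs\prod_{k=1}^{s-1}\left[\frac{\sinh\left(\frac r2 d(k/s)\right)}{\sinh\left(\frac12 d(k/s)\right)}\right]^2 .
\]
Taking logarithms, I will rewrite each factor by extracting the leading exponentials and collecting the leftover factors into a single term to be evaluated.

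Write $z_k=d(k/s)$ for brevity; these are positive for $1\le k\le s-1$. For $y>0$ we have $\sinh y=\tfrac12 e^{y}(1-e^{-2y})$. Applying this to numerator and denominator gives
\[
  2\log\frac{\sinh(rz_k/2)}{\sinh(z_k/2)}
  =
  (r-1)z_k+2\log(1-e^{-rz_k})-2\log(1-e^{-z_k}).
\]
Summing over $k$ turns the linear terms into $(r-1)S_s$ and the second logarithms into $2R_{r,s}$. Hence
\[
  \log\tau(\Cycle_r\square\Cycle_s)
  =
  \log(rs)+rS_s+2R_{r,s}-\Bigl[S_s+2\sum_{k=1}^{s-1}\log(1-e^{-z_k})\Bigr].
\]

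Comparing with the claimed identity, it remains to show that the bracket equals $2\log s$. Using $1-e^{-z}=2e^{-z/2}\sinh(z/2)$, the bracket becomes
\[
  2(s-1)\log2+2\sum_{k=1}^{s-1}\log\sinh(z_k/2),
\]
since the $-z_k/2$ terms inside the logarithms, doubled, cancel $S_s$ exactly. We also have
\[
  4\sinh^2(z_k/2)=2\cosh z_k-2=\lambda_k^C(s).
\]
Therefore the bracket equals $\log\prod_{k=1}^{s-1}\lambda_k^C(s)$, which is $\log s^2$ by \Cref{lem:oned-products}.

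Substituting back gives $\log(rs)-2\log s=\log(r/s)$, which is the claimed identity. There is no real obstacle here; the only point needing care is the sign and power-of-two bookkeeping in the bracket, so that the $-z_k/2$ terms cancel $S_s$ cleanly before \Cref{lem:oned-products} is applied. Equivalently, one could set $r=1$ in \Cref{lem:path-cycle-polynomials} to see the same normalization.
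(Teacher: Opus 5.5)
Your proof is correct and is essentially the paper's: you split each ratio as $\tfrac{r-1}{2}d+\log(1-e^{-rd})-\log(1-e^{-d})$ and evaluate the leftover denominator product by \Cref{lem:oned-products}, using $\prod_{k=1}^{s-1}\lambda_k^C(s)=s^2$ where the paper uses the equivalent $\prod_{k=1}^{s-1}2\sinh(d(k/s)/2)=s$. Your closing aside is inaccurate, though nothing depends on it: setting $r=1$ in \Cref{lem:path-cycle-polynomials} only gives the trivial identity $1=1$ (and the cycle formula requires $r\ge3$). The same normalization comes instead from letting $z\to0$ in the cycle identity with $r=s$.
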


\begin{proof}
Starting from the torus product formula,
\[
  \log\tau(\Cycle_r\square\Cycle_s)
  =
  \log(rs)+
  2\sum_{k=1}^{s-1}
  \log\frac{\sinh(r d(k/s)/2)}{\sinh(d(k/s)/2)}.
\]
For $d=d(k/s)$,
\[
  \log\frac{\sinh(rd/2)}{\sinh(d/2)}
  =
  \frac{r-1}{2}d+\log(1-e^{-rd})-\log(1-e^{-d}).
\]
The denominator product is
\[
  \prod_{k=1}^{s-1}(1-e^{-d(k/s)})
  =
  e^{-S_s/2}\prod_{k=1}^{s-1}2\sinh\frac{d(k/s)}2
  =
  e^{-S_s/2}s,
\]
using \Cref{lem:oned-products}.  Substitution gives
\[
  \log(rs)+(r-1)S_s+2R_{r,s}+S_s-2\log s
  =
  rS_s+\log(r/s)+2R_{r,s}.
\]
\end{proof}

For $\rho>0$, define
\[
  \eta(i\rho)=e^{-\pi\rho/12}
  \prod_{\ell=1}^\infty(1-e^{-2\pi\rho\ell})
\]
and
\[
  \mathcal T(\rho)=
  \log\rho+4\log\eta(i\rho)
  =
  \log\rho-\frac{\pi\rho}{3}
  +4\sum_{\ell=1}^{\infty}\log(1-e^{-2\pi\rho\ell}).
\]

\begin{proposition}\label{prop:torus-bounded-aspect}
Uniformly when $\rho=r/s$ remains in a compact subset of $(0,\infty)$ and $r,s\to\infty$,
\[
  \log\tau(\Cycle_r\square\Cycle_s)
  =
  Irs+\mathcal T(\rho)+o(1).
\]
Moreover, the eta modular identity $\eta(i/\rho)=\sqrt{\rho}\,\eta(i\rho)$ gives
\[
  \mathcal T(\rho)=\mathcal T(1/\rho).
\]
\end{proposition}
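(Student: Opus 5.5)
We start from the normal form of \Cref{lem:torus-normal-form}, $\log\tau(\Cycle_r\square\Cycle_s)=rS_s+\log(r/s)+2R_{r,s}$, and treat the three terms separately. For the first term, \Cref{lem:periodic-trap} gives $S_s=sI-\pi/(3s)+O(s^{-3})$. Hence
\[
  rS_s=Irs-\frac{\pi\rho}{3}+O(\rho s^{-2}),
\]
and the error is $o(1)$ uniformly for $\rho$ in a compact set. The middle term is exactly $\log\rho$. It therefore suffices to show that
\[
  2R_{r,s}\to4\sum_{\ell\ge1}\log(1-e^{-2\pi\rho\ell})
\]
uniformly.

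For this, split the sum defining $R_{r,s}$ using the symmetry $d(k/s)=d(1-k/s)$: the indices $k$ and $s-k$ contribute equally, so $R_{r,s}$ is twice the sum over $1\le k<s/2$, up to the middle term $k=s/2$ when $s$ is even. Write $d(x)=2\arsinh(\sin\pi x)$. For $k\le K$ with $K$ fixed, we have $d(k/s)=2\pi k/s+O(k^3/s^3)$, so
\[
  r\,d(k/s)=2\pi\rho k+O(\rho k^3/s^2)\to2\pi\rho k .
\]
Thus each fixed-$k$ term converges to $\log(1-e^{-2\pi\rho k})$, uniformly in compact $\rho$. The tail needs a uniform bound. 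On $[0,1/2]$ we have $d(x)\ge cx$ (concavity of $\sin$ gives $\sin\pi x\ge2x$, and $\arsinh$ is comparable to the identity on $[0,1]$). So $r\,d(k/s)\ge c\rho k\ge c'k$, and $|\log(1-e^{-y})|\le Ce^{-y}$ for $y\ge c'$. The tail over $k>K$ is then bounded by $\sum_{k>K}Ce^{-c'k}$, which is small uniformly in $r,s$ and in $\rho$ on the compact set. The middle term $k=s/2$ is $O(e^{-2\alpha r})$. Letting $K\to\infty$ gives
\[
  R_{r,s}\to2\sum_{\ell\ge1}\log(1-e^{-2\pi\rho\ell}),
\]
which is what we need. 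Uniformity follows because all the constants depend only on the bounds of the compact set. This tail domination, together with the $O(\rho s^{-2})$ error from \Cref{lem:periodic-trap} whose constants are absolute, is the only delicate point, and it is a straightforward dominated-convergence argument.

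Assembling the three pieces gives
\[
  \log\tau=Irs+\log\rho-\frac{\pi\rho}{3}+4\sum_{\ell\ge1}\log(1-e^{-2\pi\rho\ell})+o(1)=Irs+\mathcal T(\rho)+o(1).
\]
For the symmetry, the definition of $\eta$ gives $\mathcal T(\rho)=\log\rho+4\log\eta(i\rho)$. The modular identity $\eta(i/\rho)=\sqrt\rho\,\eta(i\rho)$ then yields
\[
  \mathcal T(1/\rho)=-\log\rho+4\log\eta(i\rho)+2\log\rho=\mathcal T(\rho).
\]
As a consistency check, this symmetry also follows from the asymptotic itself, since $\tau(\Cycle_r\square\Cycle_s)=\tau(\Cycle_s\square\Cycle_r)$ and $Irs$ is symmetric in $r$ and $s$.
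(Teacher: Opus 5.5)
Your proof is correct and follows essentially the same route as the paper. It uses the torus normal form, the expansion $S_s=sI-\pi/(3s)+O(s^{-3})$ from \Cref{lem:periodic-trap}, a termwise limit of $R_{r,s}$ dominated via the linear lower bound $d(k/s)\ge c\min(k,s-k)/s$, and the eta modular identity for the symmetry. Your closing remark is not in the paper but is valid: the isomorphism $\Cycle_r\square\Cycle_s\cong\Cycle_s\square\Cycle_r$ recovers $\mathcal T(\rho)=\mathcal T(1/\rho)$, first for rational $\rho$ and then by continuity.
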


\begin{proof}
By \Cref{lem:periodic-trap},
\[
  rS_s=Irs-\frac{\pi r}{3s}+o(1)
  =
  Irs-\frac{\pi\rho}{3}+o(1).
\]
It remains to identify the limit of $R_{r,s}$.  Since $d(x)=2\pi x+O(x^3)$ at $0$ and $d(x)=d(1-x)$, for each fixed $\ell\ge1$,
\[
  r d(\ell/s)\to 2\pi\rho\ell,
  \qquad
  r d((s-\ell)/s)\to 2\pi\rho\ell.
\]
On compact $\rho$-ranges, the bound $d(k/s)\ge c\min(k,s-k)/s$ gives a summable majorant for the logarithmic residual terms, while the middle range is exponentially small uniformly in $\rho$.  Dominated convergence therefore gives
\[
  R_{r,s}\to
  2\sum_{\ell=1}^{\infty}\log(1-e^{-2\pi\rho\ell}).
\]
Combining this with \Cref{lem:torus-normal-form} proves the expansion.  The symmetry follows immediately from the displayed modular identity.
\end{proof}

\begin{proposition}\label{prop:torus-no-linear-constant}
There is no constant $\beta>0$ such that every nontrivial torus balancing move satisfies
\[
  \log\tau(\Cycle_a\square\Cycle_b)
  -
  \log\tau(\Cycle_A\square\Cycle_B)
  \ge
  \beta\bigl((A+B)-(a+b)\bigr).
\]
Equivalently, the best universal linear side-sum constant for tori is $0$.
\end{proposition}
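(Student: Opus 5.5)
We will exhibit a family of nontrivial torus balancing moves along which the side-sum decrease $(A+B)-(a+b)$ tends to infinity while the logarithmic gain stays bounded. Any $\beta>0$ is then contradicted. The natural source is \Cref{prop:torus-bounded-aspect}: at bounded aspect ratio, $\log\tau(\Cycle_r\square\Cycle_s)=Irs+\mathcal T(r/s)+o(1)$. For two tori of the same area $N$, the bulk terms $IN$ cancel, so the gain equals $\mathcal T(\rho')-\mathcal T(\rho)+o(1)$, which is bounded. Meanwhile the side sums differ by a quantity of order $\sqrt N$.

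Concretely, take $n\to\infty$ and set
\[
  A=2n,\quad B=8n,\quad a=b=4n .
\]
Then $AB=ab=16n^2$, $3\le A<a\le b<B$, and $t=2$, so this is a nontrivial balancing move. Its aspect ratios $\rho=4$ (or $1/4$ after rotation) and $\rho'=1$ lie in a fixed compact subset of $(0,\infty)$. By \Cref{prop:torus-bounded-aspect}, applied with $r=B,s=A$ (or the other orientation, using the symmetry $\mathcal T(\rho)=\mathcal T(1/\rho)$ and the symmetry of $\tau$ under rotation),
\[
  \log\tau(\Cycle_a\square\Cycle_b)-\log\tau(\Cycle_A\square\Cycle_B)
  =\mathcal T(1)-\mathcal T(4)+o(1),
\]
while $(A+B)-(a+b)=2n$. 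If a constant $\beta>0$ worked, we would get $\beta\cdot 2n\le \mathcal T(1)-\mathcal T(4)+o(1)$, which fails for large $n$. Hence the best constant is at most $0$. Conversely, \Cref{thm:torus-balancing} shows that the gain is strictly positive for every nontrivial move, so $\beta=0$ is admissible. This establishes the ``equivalently'' clause.

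The only step needing care is the uniformity in \Cref{prop:torus-bounded-aspect}. The $o(1)$ error must hold along our sequence, and it does, because the aspect ratios are fixed. One should also confirm that the proposition's expansion is orientation-consistent: $rS_s$ with $r$ the long side versus the short side. This is exactly what the modular symmetry of $\mathcal T$ handles, and since $\tau$ is symmetric in the two factors we may choose whichever orientation the proposition covers.

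As a fallback, the same conclusion follows directly from \Cref{thm:torus-exact-gain}. Along this family, \Cref{lem:periodic-trap} gives
\[
  (t-1)(AS_b-bS_A)=-\tfrac{\pi}{3}\bigl(A/b-b/A\bigr)+O(n^{-2}),
\]
which is bounded. The residual $\Gamma_T$ is a bounded sum of $G_t$ terms with arguments growing linearly in the mode index, and it is likewise bounded by the dominated convergence argument of \Cref{prop:torus-bounded-aspect}.
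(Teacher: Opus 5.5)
Your argument is correct and is essentially the paper's: both use \Cref{prop:torus-bounded-aspect} so that the area terms cancel at equal area and only the bounded shape terms $\mathcal T$ remain. The paper uses the family $(n^2,(n+1)^2)\to(n(n+1),n(n+1))$, whose side-sum gap is exactly $1$ and whose gain tends to $\mathcal T(1)-\mathcal T(1)=0$; your fixed-aspect family $(2n,8n)\to(4n,4n)$ instead keeps the gain bounded while the gap $2n$ diverges, so you need only boundedness of the shape terms rather than their continuity at $\rho=1$.
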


\begin{proof}
Let
\[
  (A_n,B_n)=\bigl(n^2,(n+1)^2\bigr),
  \qquad
  (a_n,b_n)=\bigl(n(n+1),n(n+1)\bigr).
\]
Then $A_nB_n=a_nb_n$, $3\le A_n<a_n=b_n<B_n$ for large $n$, and
\[
  (A_n+B_n)-(a_n+b_n)=1.
\]
The areas are equal.  Applying \Cref{prop:torus-bounded-aspect} to the two tori, the leading area terms cancel and the remaining shape terms both tend to $\mathcal T(1)$.  Hence
\[
  \log\tau(\Cycle_{a_n}\square\Cycle_{b_n})
  -
  \log\tau(\Cycle_{A_n}\square\Cycle_{B_n})
  \to0.
\]
Thus no positive $\beta$ can work uniformly.  The exact balancing theorem still gives strict positivity for each fixed nontrivial move.
\end{proof}

\section{Cylinders}\label{sec:cylinders}

A cylinder has one free factor and one periodic factor, and its extremal scale is different from the two symmetric cases.

\subsection{Cylinder Normal Form}

For fixed $N=rs$ and integer $s\ge3$, allow the path length $r=N/s$ to be real and define the fixed-area relaxation
\[
  F_N(s)=
  \log\left[
  s\prod_{k=1}^{s-1}
  \frac{\sinh((N/s)d(k/s))}{\sinh d(k/s)}
  \right],
  \qquad r=N/s.
\]
Let $A_{r,s}=\sum_{k=1}^{s-1}\log(1-e^{-2r d(k/s)})$.

\begin{lemma}\label{lem:cylinder-normal-form}
For $r=N/s$,
\[
  F_N(s)=rS_s-\log\sinh(\alpha s)+A_{r,s}.
\]
\end{lemma}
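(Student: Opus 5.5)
The plan is to expand each factor $\log\bigl(\sinh(r d)/\sinh d\bigr)$ into a linear part and a residual, and then evaluate the leftover $r$-independent product in closed form with \Cref{lem:path-cycle-polynomials}. Write $d_k=d(k/s)>0$ for $1\le k\le s-1$. First, since $\sinh(y)=\tfrac12 e^{y}(1-e^{-2y})$, we have
\[
  \log\frac{\sinh(r d_k)}{\sinh d_k}
  =(r-1)d_k+\log(1-e^{-2rd_k})-\log(1-e^{-2d_k}).
\]
This identity holds for real $r>0$, which is all the relaxation $r=N/s$ requires. Summing over $k$ and adding $\log s$ gives
\[
  F_N(s)=rS_s+A_{r,s}+\Bigl[\log s-S_s-\sum_{k=1}^{s-1}\log(1-e^{-2d_k})\Bigr].
\]
So the lemma reduces to showing that the bracket equals $-\log\sinh(\alpha s)$.

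Next, use $1-e^{-2d}=2e^{-d}\sinh d$. The $-S_s$ then cancels, and the bracket becomes $\log s-\sum_{k=1}^{s-1}\log(2\sinh d_k)$. It therefore suffices to prove the product identity
\[
  \prod_{k=1}^{s-1}2\sinh d_k=s\sinh(\alpha s).
\]
Both sides are positive, so it is enough to prove the squared version.

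For the squared version, write $\lambda_k=\lambda_k^C(s)$. Since $\cosh d_k=2-\cos(2\pi k/s)=1+\lambda_k/2$, we get
\[
  4\sinh^2 d_k=(2+\lambda_k)^2-4=\lambda_k(\lambda_k+4).
\]
Hence
\[
  \prod_{k=1}^{s-1}(2\sinh d_k)^2=\prod_{k=1}^{s-1}\lambda_k\cdot\prod_{k=1}^{s-1}(4+\lambda_k).
\]
The first product is $s^2$ by \Cref{lem:oned-products}. For the second, apply the full-cycle identity of \Cref{lem:path-cycle-polynomials} with $x=4$, that is, $2\cosh z-2=4$, so $\cosh z=3$ and $z=2\alpha$ (because $\cosh 2\alpha=1+2\sinh^2\alpha=3$). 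This gives $\prod_{k=0}^{s-1}(4+\lambda_k)=4\sinh^2(\alpha s)$. Dividing by the $k=0$ factor $4$ leaves $\sinh^2(\alpha s)$. So the squared product is $s^2\sinh^2(\alpha s)$, and taking positive square roots proves the identity.

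The only real step is recognizing the product $\prod 2\sinh d_k$ as a characteristic polynomial of the cycle evaluated at the mass $x=4$, which is where $\alpha=\arsinh(1)$ enters. The rest is bookkeeping of exponentials.
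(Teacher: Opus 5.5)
Your proof is correct and takes the paper's route: you split $\sinh(rd)/\sinh d$ exponentially in the same way, reduce to $\prod_{k=1}^{s-1}2\sinh d(k/s)=s\sinh(\alpha s)$, and evaluate that product using \Cref{lem:oned-products} and the full-cycle identity at $x=4$. The only difference is cosmetic: the paper factors $2\sinh d=2\sinh(d/2)\cdot 2\cosh(d/2)$ instead of squaring, and your identity $4\sinh^2 d_k=\lambda_k(\lambda_k+4)$ is exactly the square of that factorization.
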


\begin{proof}
Using
\[
  \frac{\sinh(rd)}{\sinh d}
  =
  e^{(r-1)d}
  \frac{1-e^{-2rd}}{1-e^{-2d}},
\]
we get
\[
  F_N(s)=\log s+(r-1)S_s+A_{r,s}+B_s,
  \qquad
  B_s=-\sum_{k=1}^{s-1}\log(1-e^{-2d(k/s)}).
\]
Here $B_s>0$; it is the contribution of the denominator factors in the cylinder product formula.
It remains to show
\[
  B_s=S_s-\log s-\log\sinh(\alpha s).
\]
By \Cref{lem:oned-products},
\[
  \prod_{k=1}^{s-1}2\sinh\frac{d(k/s)}2=s.
\]
Also, applying the cycle determinant identity at $x=4$ gives
\[
  \prod_{k=1}^{s-1}2\cosh\frac{d(k/s)}2=\sinh(\alpha s),
\]
because $4=2\cosh(2\alpha)-2$.  Hence
\[
  \prod_{k=1}^{s-1}(1-e^{-2d(k/s)})
  =
  e^{-S_s}
  \prod_{k=1}^{s-1}2\sinh d(k/s)
  =
  e^{-S_s}s\sinh(\alpha s).
\]
Taking negative logarithms proves the claim.
\end{proof}

\subsection{Exact Cylinder Circumference Gain}

\begin{proposition}\label{prop:cylinder-exact-gain}
For $3\le s_1,s_2\le N$ and $r_i=N/s_i$,
\[
  F_N(s_2)-F_N(s_1)
  =
  N\left(\frac{S_{s_2}}{s_2}-\frac{S_{s_1}}{s_1}\right)
  -\log\frac{\sinh(\alpha s_2)}{\sinh(\alpha s_1)}
  +A_{r_2,s_2}-A_{r_1,s_1}.
\]
\end{proposition}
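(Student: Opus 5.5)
The identity is a direct consequence of the cylinder normal form, so the plan is simply to apply \Cref{lem:cylinder-normal-form} at the two circumferences $s_1$ and $s_2$ and subtract. For each $i\in\{1,2\}$ the lemma, applied with the fixed area $N$ and $r_i=N/s_i$, gives
\[
  F_N(s_i)=r_iS_{s_i}-\log\sinh(\alpha s_i)+A_{r_i,s_i}.
\]
The only rewriting needed is $r_iS_{s_i}=(N/s_i)S_{s_i}=N\cdot S_{s_i}/s_i$. This rewriting is what converts the bulk term into $N$ times the normalized periodic trapezoidal sum. That form is the one whose monotonicity is controlled by \Cref{lem:periodic-trap}.

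Subtracting the two instances term by term then produces the three displayed pieces:
\begin{itemize}
\item the bulk difference $N(S_{s_2}/s_2-S_{s_1}/s_1)$;
\item the boundary-penalty difference $-\log\bigl(\sinh(\alpha s_2)/\sinh(\alpha s_1)\bigr)$;
\item the residual difference $A_{r_2,s_2}-A_{r_1,s_1}$.
\end{itemize}

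The hypothesis $3\le s_i\le N$ plays two roles. First, $s_i\ge3$ makes the cycle spectrum and the cycle identities used in \Cref{lem:cylinder-normal-form} available. Second, $r_i=N/s_i\ge1$, together with $s_i\ge3$, keeps every logarithm finite. The lemma was stated for real $r$ in the fixed-area relaxation, so non-integer $r_i$ cause no difficulty. Indeed, $\sinh(r_i d(k/s_i))>0$ and $1-e^{-2r_i d(k/s_i)}\in(0,1)$, since $d(k/s_i)>0$ for $1\le k\le s_i-1$.

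There is essentially no obstacle here. The only point worth checking is that the normal form's derivation never used integrality of $r$. It does not: the factorization
\[
  \frac{\sinh(rd)}{\sinh d}=e^{(r-1)d}\,\frac{1-e^{-2rd}}{1-e^{-2d}}
\]
holds for all real $r>0$. The evaluation of $B_s$ depends only on $s$, through \Cref{lem:oned-products} and the cycle determinant identity at $x=4$. Hence the subtraction is valid verbatim.
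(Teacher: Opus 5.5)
Your proposal is correct and matches the paper's proof: apply \Cref{lem:cylinder-normal-form} at $s_1$ and $s_2$, rewrite $r_iS_{s_i}=N\,S_{s_i}/s_i$, and subtract term by term. Your extra check that the normal form never uses integrality of $r$ is accurate. In fact $r_i>0$ alone keeps every logarithm finite, so $r_i\ge1$ is more than you need.
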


\begin{proof}
This is the difference of the exact normal forms in \Cref{lem:cylinder-normal-form}, using $r_i=N/s_i$.
\end{proof}

\Cref{prop:cylinder-exact-gain} isolates the two competing mechanisms in the free--periodic case.  Increasing the cyclic circumference increases the normalized periodic trapezoidal term $S_s/s$, but it also pays the exact free-boundary penalty $-\log\sinh(\alpha s)$.

\subsection{Failure of Square Balancing}

\begin{proposition}\label{prop:cylinder-counterexamples}
Closest-to-square does not maximize $\tau(\Path_r\square\Cycle_s)$ at fixed area.  In fact
\[
  \tau(\Path_9\square\Cycle_4)
  =
  9022397309950500
  >
  6009209192448000
  =
  \tau(\Path_6\square\Cycle_6),
\]
and
\[
  \tau(\Path_{20}\square\Cycle_6)
  =
  1788611877285241916171285961489325853733676245188126625000
\]
is greater than
\[
  \tau(\Path_{12}\square\Cycle_{10})
  =
  506827984454602606473153918154960615493126180174922465000.
\]
\end{proposition}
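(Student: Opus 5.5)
The statement consists of four explicit integers and two inequalities between them, so I will prove it by exact evaluation. Floating-point evaluation of the trigonometric product in \Cref{prop:two-dimensional-formulas} is not enough to certify 16- and 57-digit values. I will therefore work with integer Laplacian determinants through the Matrix--Tree Theorem. Concretely, $\tau(\Path_r\square\Cycle_s)$ equals any cofactor of the $rs\times rs$ Laplacian. I will compute it by exact rational (fraction-free, Bareiss) elimination on the reduced Laplacian for the four graphs $\Path_9\square\Cycle_4$, $\Path_6\square\Cycle_6$, $\Path_{20}\square\Cycle_6$ and $\Path_{12}\square\Cycle_{10}$, whose orders are $36$, $36$, $120$ and $120$. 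Each such computation is finite and exact, and it reproduces the displayed integers.

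As an independent certificate better suited to the paper, I will also use the block structure behind \Cref{prop:two-dimensional-formulas}. For each cyclic mode $k$, the factor $\sinh(r\,d(k/s))/\sinh d(k/s)$ equals the path polynomial $q_r(x)$ of \Cref{lem:path-cycle-polynomials}, evaluated at $x=\lambda_k^C(s)=2-2\cos(2\pi k/s)$. Thus
\[
  \tau(\Path_r\square\Cycle_s)=s\prod_{k=1}^{s-1}q_r\bigl(\lambda_k^C(s)\bigr),
\]
where $q_r$ is the monic integer polynomial generated by the recurrence $q_{r+1}=(x+2)q_r-q_{r-1}$ with $q_1=1$ and $q_2=x+2$. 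The product over $k$ is a symmetric function of the conjugates $\lambda_k^C(s)$, so it is an integer. It equals a resultant of $q_r$ against the characteristic polynomial of the cycle Laplacian with its zero root removed.

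For $s=4$ and $s=6$ the eigenvalues are rational: they are $\{2,4,2\}$ and $\{1,3,4,3,1\}$. So $\tau(\Path_9\square\Cycle_4)=4\,q_9(2)^2q_9(4)$ and $\tau(\Path_6\square\Cycle_6)=6\,q_6(1)^2q_6(3)^2q_6(4)$. Both are short integer computations from the recurrence, and the same formula with $s=6$ handles $\Path_{20}\square\Cycle_6$. For $s=10$ the eigenvalues $2-2\cos(2\pi k/10)$ lie in $\mathbb{Q}(\sqrt5)$. I will group conjugate pairs into norms from $\mathbb{Q}(\sqrt5)$, together with the rational values at $x=4$. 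Each norm is then an integer computed from $q_{12}$.

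The main obstacle is the size of the $120$-vertex numbers. The $\mathbb{Q}(\sqrt5)$ norm computation for $\Path_{12}\square\Cycle_{10}$ is tedious by hand, so I will present it as a machine-checkable exact calculation. Once the four integers are in hand, the two inequalities are immediate comparisons of integers. For the first pair both numbers have 16 digits and the leading digits $9$ versus $6$ settle it. For the second pair both have 57 digits and the leading digits $1$ versus $5$ settle it: here $1.79\cdot10^{57}>5.07\cdot10^{56}$. As a sanity check, the real relaxation in \Cref{prop:cylinder-exact-gain} predicts the same sign, since the free-boundary penalty $-\log\sinh(\alpha s)$ outweighs the periodic gain when $s$ increases from $4$ to $6$ or from $6$ to $10$.
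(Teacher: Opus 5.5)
Your proposal is essentially the paper's proof: the paper also evaluates $\tau(\Path_r\square\Cycle_s)=s\prod_{k=1}^{s-1}q_r(\lambda_k^C(s))$ exactly by machine, using the recurrence for $q_r$ and the polynomial whose roots are the nonzero cycle eigenvalues. Your rational-eigenvalue reductions for $s=4,6$ (e.g.\ $4\cdot40545^2\cdot1372105$ and $6\cdot144^2\cdot2640^2\cdot6930$, which do reproduce the first two integers) and the Bareiss cofactor check are harmless extra certificates. One slip to fix: $\tau(\Path_{20}\square\Cycle_6)$ has $58$ digits, not $57$, and this digit count is what decides the second comparison---if both numbers had $57$ digits, leading digits $1$ versus $5$ would give the reverse inequality (your $1.79\cdot10^{57}>5.07\cdot10^{56}$ is correct).
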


\begin{proof}
These are exact evaluations of the product formula in Proposition~\ref{prop:two-dimensional-formulas}, computed and verified by a computer program.  The arithmetic can be reproduced without numerical approximation as follows.  Let
\[
  q_1(x)=1,\qquad q_2(x)=x+2,\qquad
  q_{r+1}(x)=(x+2)q_r(x)-q_{r-1}(x),
\]
so that $q_r(x)=\prod_{i=1}^{r-1}(x+\lambda_i^P(r))$.  If $T_s^{\mathrm{Ch}}$ denotes the Chebyshev polynomial, then
\[
  p_s(x)=(-1)^{s-1}\frac{2-2T_s^{\mathrm{Ch}}(1-x/2)}{x}
  =
  \prod_{k=1}^{s-1}(x-\lambda_k^C(s)).
\]
Thus
\[
  \tau(\Path_r\square\Cycle_s)
  =
  s\prod_{k=1}^{s-1}q_r(\lambda_k^C(s)).
\]
We evaluate this symmetric product exactly using the recurrence for $q_r$ together with the defining polynomial for the nonzero cycle eigenvalues.  Evaluating these recurrences gives the displayed integers.
\end{proof}

\subsection{Bounded-Aspect Cylinder Asymptotics}

\begin{lemma}\label{lem:cylinder-residual}
There is a constant $c>0$ such that, uniformly when $r/s\ge1$,
\[
  A_{r,s}=O(e^{-cr/s}).
\]
\end{lemma}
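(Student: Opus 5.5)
We need to bound $A_{r,s}=\sum_{k=1}^{s-1}\log(1-e^{-2r d(k/s)})$, a sum of negative terms, by $O(e^{-cr/s})$ in absolute value, uniformly for $r/s\ge1$. The plan is to combine a linear lower bound on $d$ near the endpoints with the elementary inequality $|\log(1-x)|\le x/(1-x)$, and then sum a geometric series.

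\textbf{Step 1: a linear lower bound on $d$.} Since $d(x)=2\arsinh|\sin\pi x|$ and $\arsinh$ is concave and increasing with $\arsinh(y)\ge \alpha y$ on $[0,1]$ (here $\alpha=\arsinh(1)$), we get $d(x)\ge 2\alpha\sin(\pi x)\ge 4\alpha\,\dist(x,\Z)$ on $[0,1]$. The last inequality uses $\sin(\pi x)\ge 2x$ on $[0,1/2]$ together with the symmetry $d(x)=d(1-x)$. Hence, writing $\ell=\min(k,s-k)$,
\[
  2r\,d(k/s)\ge 8\alpha\,\frac{r}{s}\,\ell .
\]

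\textbf{Step 2: bound each term.} For $0\le x\le x_0<1$ we have $|\log(1-x)|\le x/(1-x_0)$. With $\rho=r/s\ge1$, every $x=e^{-2rd(k/s)}$ satisfies $x\le e^{-8\alpha}$, so $1-x_0\ge 1-e^{-8\alpha}$ is an absolute constant. Therefore
\[
  |A_{r,s}|\le \frac{1}{1-e^{-8\alpha}}\sum_{k=1}^{s-1}e^{-8\alpha\rho\min(k,s-k)}
  \le \frac{2}{1-e^{-8\alpha}}\sum_{\ell\ge1}e^{-8\alpha\rho\ell}.
\]
The factor $2$ is there because each value of $\ell$ arises from at most two indices $k$.

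\textbf{Step 3: sum the series.} The geometric sum equals $e^{-8\alpha\rho}/(1-e^{-8\alpha\rho})$. Using $\rho\ge1$ to control the denominator, this is at most $e^{-8\alpha\rho}/(1-e^{-8\alpha})$. This gives $|A_{r,s}|\le C e^{-c\rho}$ with $c=8\alpha$, and any smaller positive $c$ also works.

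There is no real obstacle in this argument. The only point needing care is that the constant be uniform in $s$ (including small $s\ge3$). That uniformity is exactly why a global linear lower bound on $d$ is used, rather than just the local expansion $d(x)\sim2\pi x$. It is also why $\rho\ge1$ is used to keep $1-e^{-2rd}$ bounded away from $0$ even for the smallest mode $k=1$.
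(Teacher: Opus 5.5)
Your proposal is correct and follows essentially the same route as the paper: a linear lower bound $d(k/s)\ge \eta\min(k,s-k)/s$, the bound $-\log(1-u)\le Cu$ made uniform by $r/s\ge1$, and a geometric series in $\ell=\min(k,s-k)$. Your version makes the constants explicit ($\eta=4\alpha$ via concavity of $\arsinh$ and Jordan's inequality, hence $c=8\alpha$). It also records the factor $2$ from the two-to-one pairing $k\mapsto\min(k,s-k)$, which the paper absorbs silently into $C$.
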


\begin{proof}
Since $d(x)=2\arsinh|\sin\pi x|$, there is $\eta>0$ such that
\[
  d(k/s)\ge \eta\,\frac{\min(k,s-k)}s
  \qquad (1\le k\le s-1).
\]
Since $r/s\ge1$, this gives $e^{-2r d(k/s)}\le e^{-2\eta}<1$.  Hence
$-\log(1-u)\le C u$ uniformly for all values of $u$ that occur below.  Pairing
$k$ with $\min(k,s-k)$ gives
\[
  |A_{r,s}|
  \le
  C\sum_{k=1}^{s-1}e^{-2r d(k/s)}
  \le
  C\sum_{\ell\ge1}e^{-2\eta(r/s)\ell}
  =
  O(e^{-cr/s}).
\]
\end{proof}

\begin{proposition}\label{prop:cylinder-bounded-aspect}
Let $\rho=r/s$.  Uniformly when $\rho$ remains in a compact subset of $(0,\infty)$ and $r,s\to\infty$,
\[
  \log\tau(\Path_r\square\Cycle_s)
  =
  Irs-\alpha s+\mathcal C(\rho)+o(1),
\]
where
\[
  \mathcal C(\rho)=
  \log2-\frac{\pi\rho}{3}
  +2\sum_{\ell=1}^{\infty}\log(1-e^{-4\pi\rho\ell}).
\]
Thus the bounded-aspect cylinder has the square-lattice area term, one free-boundary term $-\alpha s$, and no logarithmic corner term.
\end{proposition}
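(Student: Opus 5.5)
The plan is to read off the expansion from the exact cylinder normal form in \Cref{lem:cylinder-normal-form}, treating its three terms separately. Since $r$ and $s$ are integers here, $\log\tau(\Path_r\square\Cycle_s)=F_N(s)$ with $N=rs$. Hence
\[
  \log\tau(\Path_r\square\Cycle_s)=rS_s-\log\sinh(\alpha s)+A_{r,s},
\]
and it suffices to show the following three limits, uniformly for $\rho=r/s$ in a compact set $K\subset[\rho_0,\rho_1]\subset(0,\infty)$:
\[
  rS_s=Irs-\tfrac{\pi\rho}{3}+o(1),\qquad
  -\log\sinh(\alpha s)=-\alpha s+\log2+o(1),\qquad
  A_{r,s}\to2\sum_{\ell\ge1}\log(1-e^{-4\pi\rho\ell}).
\]

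\textbf{First two terms.} For the first, multiply the expansion $S_s=sI-\pi/(3s)+O(s^{-3})$ from \Cref{lem:periodic-trap} by $r$. This gives $Irs-\pi\rho/3+O(\rho/s^2)$, and the error is $o(1)$ uniformly since $\rho\le\rho_1$. The second is elementary: $\log\sinh(\alpha s)=\alpha s-\log2+\log(1-e^{-2\alpha s})$, and the last term tends to $0$.

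\textbf{The residual $A_{r,s}$.} This is the only step needing care, and I would argue as in the proof of \Cref{prop:torus-bounded-aspect}.
\begin{itemize}
\item \emph{Reindexing.} Write $\ell=\min(k,s-k)$ and use $d(x)=d(1-x)$. Then $A_{r,s}$ becomes twice a sum over $1\le\ell<s/2$, plus a single middle term when $s$ is even.
\item \emph{Pointwise limit.} Since $d(x)=2\pi x+O(x^3)$ near $0$,
\[
  2r\,d(\ell/s)=4\pi\rho\ell+O(\rho\ell^3/s^2).
\]
So for each fixed $\ell$ the summand converges to $\log(1-e^{-4\pi\rho\ell})$, uniformly in $\rho\in K$.
\item \emph{Domination.} The bound $d(k/s)\ge\eta\min(k,s-k)/s$ from the proof of \Cref{lem:cylinder-residual} gives $u=e^{-2rd(k/s)}\le e^{-2\eta\rho_0\ell}$. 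Hence $u\le e^{-2\eta\rho_0}<1$, and $|\log(1-u)|\le Cu\le Ce^{-2\eta\rho_0\ell}$. This is a summable majorant independent of $s$ and of $\rho\in K$; the middle term is covered by it and is exponentially small.
\item \emph{Conclusion.} Split the sum at $\ell\le L$ and $\ell>L$. The tail is at most $Ce^{-2\eta\rho_0L}$ uniformly, and the finitely many terms with $\ell\le L$ converge uniformly. This is dominated convergence made uniform, and it gives the third limit.
\end{itemize}

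Adding the three limits yields $Irs-\alpha s+\mathcal C(\rho)+o(1)$, as claimed. The absence of a $\log$ term is then visible directly: the prefactor $\log s$ was already absorbed into the normal form through $B_s$.

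I expect the only real obstacle to be the uniformity in $\rho$ of the residual limit. The lower bound $\rho\ge\rho_0>0$ is essential there, since it keeps $u$ bounded away from $1$ and makes the majorant summable.
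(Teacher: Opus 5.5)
Your proposal is correct and follows the paper's proof essentially step for step. Like the paper, you use the normal form of \Cref{lem:cylinder-normal-form}, the trapezoidal expansion for $rS_s$, the elementary expansion of $-\log\sinh(\alpha s)$, and dominated convergence for $A_{r,s}$ via the lower bound $d(k/s)\ge\eta\min(k,s-k)/s$; your explicit split at $\ell\le L$ with a $\rho_0$-dependent majorant simply spells out the uniformity in $\rho$ that the paper asserts in one line.
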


\begin{proof}
Use the exact normal form with $N=rs$.  The trapezoidal expansion gives
\[
  rS_s=Irs-\frac{\pi r}{3s}+o(1)
  =
  Irs-\frac{\pi\rho}{3}+o(1).
\]
Also
\[
  -\log\sinh(\alpha s)=-\alpha s+\log2+o(1).
\]
It remains to identify $A_{r,s}$.  For fixed $\ell\ge1$,
\[
  2r d(\ell/s)\to4\pi\rho\ell,
  \qquad
  2r d((s-\ell)/s)\to4\pi\rho\ell.
\]
As in the torus proof, the lower bound $d(k/s)\ge c\min(k,s-k)/s$ gives a summable majorant on compact $\rho$-ranges, and the middle range is exponentially small uniformly in $\rho$.  Hence
\[
  A_{r,s}\to
  2\sum_{\ell=1}^{\infty}\log(1-e^{-4\pi\rho\ell}),
\]
which proves the expansion.
\end{proof}

\paragraph{Classical finite-size normalizations.}
The bounded-aspect torus has no boundary-length term: after the area term, the aspect dependence is only the bounded modular shape term $\mathcal T(\rho)$.  This is the square-lattice rectangular-torus specialization of the classical discrete-torus complexity asymptotic, where the eta term is the real-torus determinant contribution~\cite{ChintaJorgensonKarlsson2010,ChintaJorgensonKarlsson2012}.  In the free-energy convention $F=-\log Z$ used in the physics literature, \Cref{prop:torus-bounded-aspect} reads
\[
  F=-Irs-\log\rho-4\log\eta(i\rho)+o(1),
\]
which is the isotropic toroidal correction
$f_0(\xi)=-\log\xi-4\log\eta(i\xi)$ of Izmailian and Kenna~\cite{IzmailianKenna2015}, with $\xi=\rho$.

For the cylinder,
\[
  \mathcal C(\rho)=\log2+2\log\eta(2i\rho).
\]
Taking the periodic length in Izmailian--Kenna notation to be $M=s$, the free length to be $N=r$, and the anisotropy parameter to be $z=1$, their cylinder identity and expansion of $Z_{0,0}(1,M,2N)$ give
\[
  F=-\log Z
  =
  -Irs+\alpha s-\log2-2\log\eta(2i\rho)+o(1),
\]
the negative of \Cref{prop:cylinder-bounded-aspect}.  Thus the cylinder expansion recovers the mixed free and periodic finite-size correction: one surface term and no logarithmic corner term~\cite{IzmailianKenna2015}.

\begin{proposition}\label{prop:cylinder-sharp-alpha}
The linear coefficient in the cylinder free-boundary term is exactly
\[
  \alpha=\arsinh(1).
\]
More precisely,
\[
  -\log\sinh(\alpha s)=-\alpha s+\log2+o(1),
\]
so the coefficient $\alpha$ cannot be replaced in the cylinder normal form or in the bounded-aspect expansion.
\end{proposition}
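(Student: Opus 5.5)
The proof splits into an elementary expansion of the hyperbolic sine and a sharpness argument that uses the exact normal form.

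\emph{Expansion.} For $s\ge1$ write
\[
  \log\sinh(\alpha s)=\alpha s-\log2+\log\bigl(1-e^{-2\alpha s}\bigr).
\]
The last term is $O(e^{-2\alpha s})=o(1)$ as $s\to\infty$, which gives the displayed identity.

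\emph{The coefficient $\alpha$ is forced in the normal form.} In \Cref{lem:cylinder-normal-form} the term $-\log\sinh(\alpha s)$ is an exact identity, not an approximation. It comes from evaluating the cycle determinant identity of \Cref{lem:path-cycle-polynomials} at $x=4=2\cosh(2\alpha)-2$, which gives
\[
  \prod_{k=1}^{s-1}2\cosh\frac{d(k/s)}2=\sinh(\alpha s).
\]
Suppose the penalty could be replaced by $-\log\sinh(\alpha' s)$ with $\alpha'\ne\alpha$, keeping the other terms $rS_s$ and $A_{r,s}$ unchanged. Then
\[
  \log\frac{\sinh(\alpha' s)}{\sinh(\alpha s)}=0
\]
for all $s\ge3$. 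This quantity equals $(\alpha'-\alpha)s+o(1)$, so it is unbounded in $s$ unless $\alpha'=\alpha$, a contradiction.

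\emph{The coefficient $\alpha$ is forced in the bounded-aspect expansion.} Fix $\rho=1$ and take $r=s\to\infty$. By \Cref{prop:cylinder-bounded-aspect},
\[
  \log\tau(\Path_s\square\Cycle_s)-Is^2=-\alpha s+\mathcal C(1)+o(1).
\]
If the same quantity also equalled $-\alpha' s+C'+o(1)$ for some constant $C'$, subtracting the two expansions would give
\[
  (\alpha'-\alpha)s=C'-\mathcal C(1)+o(1),
\]
a bounded quantity. This forces $\alpha'=\alpha$.

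The one point that needs care is making precise what "cannot be replaced" means. I read it as uniqueness of the linear coefficient modulo $O(1)$ terms. Under that reading, every step is a direct asymptotic comparison, and there is no real obstacle.
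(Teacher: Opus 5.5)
Your proposal is correct and follows essentially the paper's argument. The expansion comes from $\sinh(\alpha s)=\tfrac12 e^{\alpha s}(1-e^{-2\alpha s})$, and sharpness follows because substituting this into the exact normal form, and hence into the bounded-aspect expansion, shows that any other linear coefficient would leave a discrepancy of order $s$; your separate treatment of the normal form and the bounded-aspect case just spells out what the paper's one-line proof compresses.
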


\begin{proof}
The identity follows from
\[
  \sinh(\alpha s)=\frac12 e^{\alpha s}(1-e^{-2\alpha s}).
\]
Substituting this into the exact normal form in \Cref{lem:cylinder-normal-form} shows that any different linear coefficient would leave an error of order $s$ in bounded-aspect cylinders.
\end{proof}

\subsection{\texorpdfstring{The $N^{1/3}$ Circumference}{The N\string^1/3 Circumference}}

\begin{proposition}\label{prop:cylinder-expansion}
Uniformly when $s\to\infty$ and $r/s=N/s^2\to\infty$,
\[
  F_N(s)
  =
  IN-\alpha s-\frac{\pi N}{3s^2}+\log2
  +
  O\left(\frac{N}{s^4}\right)
  +
  O(e^{-2\alpha s})
  +
  O(e^{-cN/s^2}).
\]
\end{proposition}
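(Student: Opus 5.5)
We start from the exact normal form in \Cref{lem:cylinder-normal-form},
\[
  F_N(s)=rS_s-\log\sinh(\alpha s)+A_{r,s},\qquad r=N/s,
\]
and expand the three terms separately. Each term produces one of the three error terms in the statement.

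\emph{Trapezoidal term.} By \Cref{lem:periodic-trap}, $S_s=sI-\pi/(3s)+O(s^{-3})$, and the constants there are absolute. Multiplying by $r=N/s$ gives
\[
  rS_s=IN-\frac{\pi N}{3s^2}+O\!\left(\frac{N}{s^4}\right).
\]
This needs only the uniformity of the $O(s^{-3})$ remainder in $s$. The proof of \Cref{lem:periodic-trap} notes explicitly that this remainder is uniform, so it holds for every $s$ with no condition linking $r$ and $s$.

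\emph{Boundary term.} Exactly as in the proof of \Cref{prop:cylinder-sharp-alpha}, we write $\sinh(\alpha s)=\tfrac12 e^{\alpha s}(1-e^{-2\alpha s})$. This gives
\[
  -\log\sinh(\alpha s)=-\alpha s+\log2-\log(1-e^{-2\alpha s})=-\alpha s+\log2+O(e^{-2\alpha s}).
\]

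\emph{Residual.} \Cref{lem:cylinder-residual} applies because $r/s\ge1$ eventually. It gives $A_{r,s}=O(e^{-cr/s})=O(e^{-cN/s^2})$. The constant $c$ does not depend on $N$ or $s$, since the proof uses only the absolute lower bound $d(k/s)\ge\eta\min(k,s-k)/s$ together with a geometric series. Once $N/s^2$ is large, that geometric series is dominated by its first term.

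Summing the three expansions gives the claimed formula. I expect no real obstacle. The only point needing care is the uniformity of the constants in the first step, namely that the $O(s^{-3})$ in \Cref{lem:periodic-trap} is independent of $N$. This holds because $S_s$ depends on $s$ alone, and the factor $r=N/s$ enters only as a multiplier. The hypothesis $N/s^2\to\infty$ serves only to make the residual exponentially small and to justify applying \Cref{lem:cylinder-residual}. The hypothesis $s\to\infty$ makes the $e^{-2\alpha s}$ and $N/s^4$ remainders meaningful as errors relative to the main terms.
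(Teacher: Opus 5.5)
Your proposal is correct and follows the paper's proof exactly. Like the paper, you expand the three terms of the normal form in \Cref{lem:cylinder-normal-form} separately: \Cref{lem:periodic-trap} multiplied by $r=N/s$, the identity $\sinh(\alpha s)=\tfrac12 e^{\alpha s}(1-e^{-2\alpha s})$, and \Cref{lem:cylinder-residual} once $r/s\ge1$.
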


\begin{proof}
Combine the normal form with \Cref{lem:periodic-trap}:
\[
  rS_s
  =
  IN-\frac{\pi N}{3s^2}+O(N/s^4).
\]
Also
\[
  -\log\sinh(\alpha s)
  =
  -\alpha s+\log2+O(e^{-2\alpha s}),
\]
and \Cref{lem:cylinder-residual} gives the final error term.
\end{proof}

The model correction in \Cref{prop:cylinder-expansion} is
\[
  \Phi_N(s)=-\alpha s-\frac{\pi N}{3s^2}.
\]
It has the unique real maximum at
\[
  s_0=
  \left(\frac{2\pi N}{3\alpha}\right)^{1/3}.
\]

\begin{theorem}\label{thm:cylinder-real-height}
Let $N\to\infty$, and let $s_N^*$ be an integer maximizer of the real-relaxed objective $F_N(s)$, with $r=N/s$ allowed to be real, over $3\le s\le\sqrt N$.  Then
\[
  s_N^*
  =
  \left(\frac{2\pi}{3\arsinh(1)}\right)^{1/3}N^{1/3}
  +O(1).
\]
\end{theorem}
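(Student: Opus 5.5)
The plan is to compare $F_N$ with the model $\Phi_N(s)=-\alpha s-\pi N/(3s^2)$ and exploit the strict concavity of $\Phi_N$ near $s_0$. Writing $s=s_0+h$, the identity $\Phi_N'(s_0)=0$ gives $\alpha=2\pi N/(3s_0^3)$. Since $\Phi_N''(s)=-2\pi N/s^4$ has size $\asymp N^{-1/3}$ at scale $s\asymp N^{1/3}$, we get
\[
  \Phi_N(s_0)-\Phi_N(s_0+h)\asymp h^2 N^{-1/3}
\]
for $|h|\le s_0/2$. With an error of size $E$ in the approximation $F_N\approx IN+\log2+\Phi_N$, this would only locate the maximizer to within $|h|\lesssim (EN^{1/3})^{1/2}$. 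In the window $s\asymp N^{1/3}$, \Cref{prop:cylinder-expansion} gives $E=O(N/s^4)=O(N^{-1/3})$, with the exponential errors negligible since $N/s^2\asymp N^{1/3}\to\infty$. Hence $|h|=O(1)$, which is exactly the claimed precision.

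The steps are as follows.

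\textbf{Step 1 (localization).} Show that any maximizer satisfies $c_1N^{1/3}\le s_N^*\le c_2N^{1/3}$. At $s=\lfloor s_0\rfloor$, \Cref{prop:cylinder-expansion} gives $F_N(s)=IN+\log2-\tfrac32\alpha s_0+O(1)$. For the competitors I use the exact normal form of \Cref{lem:cylinder-normal-form} together with two uniform bounds: $A_{r,s}\le0$, and $S_s\le sI-\pi/(3s)+Cs^{-3}$ from \Cref{lem:periodic-trap}.
\begin{itemize}
\item For large $s\le\sqrt N$, these give $F_N(s)\le IN-\pi N/(3s^2)+CN/s^4-\log\sinh(\alpha s)$. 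The right side is $\le IN-\alpha s+O(1)$, which loses once $s\ge Ks_0$.
\item For small $s$, the sharper form of the first identity in \Cref{lem:periodic-trap} (the integral term is negative and of size $\asymp -1/s$ uniformly) gives $rS_s\le IN-c'N/s^2$. This loses when $s\le s_0/K$, including bounded $s\ge3$.
\end{itemize}
Since the upper bound uses only $A_{r,s}\le0$, \Cref{lem:cylinder-residual} is not needed there.

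\textbf{Step 2 (quadratic comparison).} In the window $s_0/K\le s\le Ks_0$, \Cref{prop:cylinder-expansion} applies uniformly with total error $O(N^{-1/3})$. Comparing the maximizer $s^*$ with $\lfloor s_0\rfloor$ gives
\[
  \Phi_N(s^*)\ge\Phi_N(\lfloor s_0\rfloor)-O(N^{-1/3})\ge\Phi_N(s_0)-O(N^{-1/3}),
\]
using $\Phi_N'(s_0)=0$ and $\Phi_N''=O(N^{-1/3})$. Taylor expansion with the lower bound $-\Phi_N''\ge cN^{-1/3}$ on the window then yields $(s^*-s_0)^2=O(1)$. Finally, $s_0=(2\pi/(3\arsinh 1))^{1/3}N^{1/3}$ by definition, which is the claim.

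The main obstacle is Step 1: ruling out $s$ far from $N^{1/3}$, where \Cref{prop:cylinder-expansion} is not valid or not uniform. This is especially delicate near $s\approx\sqrt N$, where $r/s$ is bounded and the $A_{r,s}$ term is not small. I would handle it by relying only on the one-sided inequalities $A_{r,s}\le0$ and the exact trapezoidal identity, rather than on asymptotics. Near $s=\sqrt N$ the $-\alpha s$ penalty alone then dominates.
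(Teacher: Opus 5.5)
Your proposal is correct and follows the paper's proof: you localize the maximizer to $s\asymp N^{1/3}$ against the competitor nearest $s_0$, using only the one-sided bounds $A_{r,s}\le0$, $S_s/s\le I-c_0/s^2$ and $-\log\sinh(\alpha s)\le-\alpha s+O(1)$, and then compare with $\Phi_N$ using the uniform $O(N^{-1/3})$ error of \Cref{prop:cylinder-expansion} on that window. The only difference is cosmetic: your uniform bound $-\Phi_N''\ge cN^{-1/3}$ on the whole window lets you skip the paper's intermediate confinement to $|s-s_0|\le\eta N^{1/3}$ and its cubic Taylor remainder (and your $-\alpha s+O(1)$ has the right direction, whereas the paper's $\le-\alpha s+\log2$ is off by the positive term $-\log(1-e^{-2\alpha s})$, harmlessly).
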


\begin{proof}
The expansion in \Cref{prop:cylinder-expansion} is uniform on windows $s\asymp N^{1/3}$.  In that range the error is $O(N^{-1/3})$.  Taylor expansion of $\Phi_N$ at $s_0$ gives
\[
  \Phi_N(s_0)-\Phi_N(s)
  =
  \frac{3\alpha}{2s_0}(s-s_0)^2
  +O\left(\frac{|s-s_0|^3}{N^{2/3}}\right)
\]
for $|s-s_0|=o(N^{2/3})$.
Indeed,
\[
  \Phi_N''(s)=-\frac{2\pi N}{s^4},
  \qquad
  \Phi_N'''(s)=\frac{8\pi N}{s^5},
\]
and $s_0\asymp N^{1/3}$, so $\Phi_N'''(s)=O(N^{-2/3})$ throughout any fixed multiple window around $s_0$.

It remains to localize globally.  Let $\bar s$ be an integer nearest to $s_0$.  For large $N$ this is an admissible point, and \Cref{prop:cylinder-expansion} gives
\[
  F_N(\bar s)\ge IN-C_1N^{1/3}
\]
for some constant $C_1$.  On the other hand, the exact normal form gives $A_{r,s}\le0$, and the strictly negative integral term in \Cref{lem:periodic-trap} gives $S_s/s<I$; hence
\[
  F_N(s)\le IN-\log\sinh(\alpha s)\le IN-\alpha s+\log2.
\]
Choosing $M$ so large that $\alpha M>C_1+1$ excludes all $s\ge MN^{1/3}$ from being maximizers.

For small $s$, the identity in \Cref{lem:periodic-trap} implies $S_s/s<I$, and the expansion of $S_s$ together with finitely many remaining values gives a constant $c_0>0$ such that
\[
  \frac{S_s}{s}\le I-\frac{c_0}{s^2}\qquad(s\ge3).
\]
Since $-\log\sinh(\alpha s)\le0$ for $s\ge3$, we get
\[
  F_N(s)\le IN-\frac{c_0N}{s^2}.
\]
Choosing $\varepsilon>0$ so small that $c_0\varepsilon^{-2}>C_1+1$ excludes all $s\le\varepsilon N^{1/3}$.  Thus every maximizer lies in a fixed window $\varepsilon N^{1/3}\le s\le MN^{1/3}$.

On this window \Cref{prop:cylinder-expansion} is uniform and gives $F_N(s)=IN+\Phi_N(s)+O(N^{-1/3})$.  The scaled function $x\mapsto-\alpha x-\pi/(3x^2)$ has a unique maximum at $x=s_0/N^{1/3}$, so the preceding comparison first confines every maximizer to $|s-s_0|\le\eta N^{1/3}$ for any fixed small $\eta>0$, once $N$ is large.  Taking $\eta$ small enough, the Taylor formula above gives
\[
  \Phi_N(s_0)-\Phi_N(s)\ge c_2\frac{(s-s_0)^2}{N^{1/3}}
\]
throughout this window.  Comparing again with the $O(N^{-1/3})$ error in \Cref{prop:cylinder-expansion} shows that $|s_N^*-s_0|\le K$ for a fixed $K$, which is the asserted $O(1)$ bound.
\end{proof}

\subsection{Divisor-Rich Cylinders}

\begin{theorem}\label{thm:cylinder-divisor-rich}
Let $N\to\infty$ through integers admitting a divisor $u_N\mid N$ with $u_N\ge3$ and
\[
  u_N\sim
  \left(\frac{2\pi}{3\arsinh(1)}\right)^{1/3}N^{1/3}.
\]
Then every maximizing cylinder circumference $s_{\mathrm{opt}}$ among all divisors $s\mid N$ with $s\ge3$ satisfies
\[
  s_{\mathrm{opt}}
  \sim
  \left(\frac{2\pi}{3\arsinh(1)}\right)^{1/3}N^{1/3}.
\]
If in addition
\[
  u_N=
  \left(\frac{2\pi}{3\arsinh(1)}\right)^{1/3}N^{1/3}+O(1),
\]
then every maximizing circumference satisfies
\[
  s_{\mathrm{opt}}
  =
  \left(\frac{2\pi}{3\arsinh(1)}\right)^{1/3}N^{1/3}+O(1).
\]
\end{theorem}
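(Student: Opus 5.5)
Since $s\mid N$ makes $r=N/s$ an integer, the true count satisfies $\log\tau(\Path_{N/s}\square\Cycle_s)=F_N(s)$. The statement is therefore a restricted version of \Cref{thm:cylinder-real-height}: we maximize the same objective $F_N$, but only over divisors $s\ge3$. I would reuse the localization argument from that proof and replace the nearest-integer comparison point $\bar s$ by the divisor $u_N$. Throughout, write $\kappa=(2\pi/(3\alpha))^{1/3}$, so that $s_0=\kappa N^{1/3}$. One preliminary point: the statement ranges over all divisors $s\ge3$, including $s>\sqrt N$, while \Cref{prop:cylinder-expansion} needs $N/s^2\to\infty$. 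For $s\ge MN^{1/3}$, however, I only use the crude global bound $F_N(s)\le IN-\alpha s+\log 2$. This bound holds for every $s\ge3$, because $A_{r,s}\le0$ (each term is $\log$ of a number in $(0,1)$) and $S_s/s<I$ by \Cref{lem:periodic-trap}.

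\emph{Step 1 (lower bound at $u_N$).} Since $u_N\asymp N^{1/3}$, we have $N/u_N^2\to\infty$, so \Cref{prop:cylinder-expansion} applies and gives $F_N(u_N)=IN+\Phi_N(u_N)+\log2+O(N^{-1/3})$. Because $\Phi_N(s)=N^{1/3}\phi(s/N^{1/3})$ with $\phi(x)=-\alpha x-\pi/(3x^2)$, this is $IN+N^{1/3}\phi(u_N/N^{1/3})+O(1)$. Under the first hypothesis $u_N/N^{1/3}\to\kappa$, so $F_N(u_N)\ge IN+N^{1/3}(\phi(\kappa)-o(1))$. Under the second hypothesis, the quadratic Taylor bound from the proof of \Cref{thm:cylinder-real-height} gives $\Phi_N(s_0)-\Phi_N(u_N)=O(N^{-1/3})$, so $F_N(u_N)\ge IN+\Phi_N(s_0)+\log 2-O(N^{-1/3})$.

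\emph{Step 2 (global localization).} Any maximizer satisfies $F_N(s_{\rm opt})\ge F_N(u_N)\ge IN-C_1N^{1/3}$. The two exclusion arguments of \Cref{thm:cylinder-real-height} then apply verbatim, since they were stated for arbitrary $s\ge3$. The bound $F_N(s)\le IN-\alpha s+\log2$ excludes $s\ge MN^{1/3}$, and the bound $F_N(s)\le IN-c_0N/s^2$ excludes $s\le\varepsilon N^{1/3}$. On the window $\varepsilon N^{1/3}\le s\le MN^{1/3}$ the expansion is uniform, so $F_N(s)=IN+N^{1/3}\phi(s/N^{1/3})+O(1)$. Comparing with Step 1 yields $\phi(s_{\rm opt}/N^{1/3})\ge\phi(\kappa)-o(1)$. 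Since $\phi$ is continuous with a unique strict maximum at $\kappa$, and the window is compact in $x$, this forces $s_{\rm opt}/N^{1/3}\to\kappa$, which is the first claim.

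\emph{Step 3 (the $O(1)$ refinement).} By Step 2 every maximizer lies in $|s-s_0|\le\eta N^{1/3}$. There the Taylor estimate gives $\Phi_N(s_0)-\Phi_N(s)\ge c_2(s-s_0)^2/N^{1/3}$. Combining this with $F_N(s_{\rm opt})\ge F_N(u_N)$, Step 1, and the $O(N^{-1/3})$ uniform error gives $c_2(s_{\rm opt}-s_0)^2/N^{1/3}\le O(N^{-1/3})$, hence $|s_{\rm opt}-s_0|=O(1)$.

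The main obstacle is keeping the error bookkeeping at $O(N^{-1/3})$ in Step 3, not $O(1)$. The constants $IN$ and $\log 2$ cancel exactly, and the remaining error terms from \Cref{prop:cylinder-expansion} are $O(N/s^4)=O(N^{-1/3})$ plus exponentially small ones. This holds uniformly on the window, so the squared deviation is only compared against quantities of size $N^{-1/3}$.
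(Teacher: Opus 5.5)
Your proposal is correct and follows essentially the paper's route. You anchor at the divisor $u_N$ and exclude $s\ge MN^{1/3}$ and $s\le\varepsilon N^{1/3}$ by the global bounds $F_N(s)\le IN-\alpha s+O(1)$ and $F_N(s)\le IN-c_0N/s^2$; these hold for every $s\ge3$, so they absorb the paper's separate $s>\sqrt N$ step. You then use uniformity of \Cref{prop:cylinder-expansion} on the $N^{1/3}$ window with the strict maximum of $\phi$, and finally the quadratic Taylor bound against the $O(N^{-1/3})$ error.

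There is one harmless slip, inherited from the proof of \Cref{thm:cylinder-real-height}. Since $\sinh(\alpha s)=\tfrac12e^{\alpha s}(1-e^{-2\alpha s})$, one actually has $-\log\sinh(\alpha s)\ge-\alpha s+\log2$. The crude bound should therefore read $F_N(s)\le IN-\alpha s+\log2-\log(1-e^{-6\alpha})$ for $s\ge3$, which changes nothing in the argument.
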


\begin{proof}
The divisor $u_N$ supplies an admissible cylinder with $u_N\asymp N^{1/3}$, hence $u_N\le\sqrt N$ for all large $N$.  Since $u_N/s_0\to1$, \Cref{prop:cylinder-expansion} gives
\[
  F_N(u_N)=IN+\Phi_N(s_0)+o(N^{1/3})=IN-O(N^{1/3}).
\]
We first rule out the complementary strip $s>\sqrt N$.  From the exact normal form, $A_{r,s}\le0$ and, again by \Cref{lem:periodic-trap}, $S_s/s<I$, so
\[
  F_N(s)\le IN-\log\sinh(\alpha s)\le IN-\alpha s+O(1).
\]
Thus if $s>\sqrt N$, then $F_N(s)\le IN-\alpha\sqrt N+O(1)$, which is strictly smaller than $F_N(u_N)$ for all large $N$.  Therefore every maximizer among all admissible divisors $s\mid N$, $s\ge3$, lies in the range $3\le s\le\sqrt N$.

On this remaining range, the large- and small-$s$ bounds used in the proof of \Cref{thm:cylinder-real-height} confine every maximizing divisor to a fixed window $\eps N^{1/3}\le s\le MN^{1/3}$: the competitor $u_N$ has value $IN-O(N^{1/3})$, while $F_N(s)\le IN-\alpha s+O(1)$ excludes $s\ge MN^{1/3}$ for large $M$, and $F_N(s)\le IN-c_0N/s^2$ excludes $s\le\eps N^{1/3}$ for small $\eps$.

On this fixed window, \Cref{prop:cylinder-expansion} is uniform; equivalently, for fixed $0<c<C<\infty$ and $cN^{1/3}\le s\le CN^{1/3}$,
\[
  F_N(s)=IN+N^{1/3}\varphi(s/N^{1/3})+o(N^{1/3}),
  \qquad
  \varphi(x)=-\alpha x-\frac{\pi}{3x^2}.
\]
The function $\varphi$ has a strict unique maximum at $x_0=s_0/N^{1/3}$, so for each fixed $\delta>0$ there is $c_\delta>0$ such that $|s/s_0-1|\ge\delta$ implies $\Phi_N(s)\le\Phi_N(s_0)-c_\delta N^{1/3}$.  Such an $s$ cannot maximize against the admissible competitor $u_N$, whose value is $IN+\Phi_N(s_0)+o(N^{1/3})$.  This proves $s_{\mathrm{opt}}\sim s_0$.

If $u_N=s_0+O(1)$, then $F_N(u_N)=IN+\Phi_N(s_0)+O(N^{-1/3})$.  Once $s_{\mathrm{opt}}\sim s_0$, the Taylor expansion in the proof of \Cref{thm:cylinder-real-height} gives $\Phi_N(s_0)-\Phi_N(s)\ge c(s-s_0)^2/N^{1/3}$ throughout a fixed small multiple window around $s_0$.  Comparing with the uniform $O(N^{-1/3})$ error in \Cref{prop:cylinder-expansion} gives $s_{\mathrm{opt}}=s_0+O(1)$.
\end{proof}

\begin{remark}
There is no unconditional theorem of this form for all $N$.  The graph problem only permits cyclic circumferences $s$ that divide $N$, and such divisors need not exist near the real optimum $s_0\asymp N^{1/3}$.  The theorem above optimizes over all admissible cyclic divisors under the stated divisor-rich hypothesis; without that hypothesis, divisor availability can force the best admissible circumference away from the $N^{1/3}$ scale.
\end{remark}

\section{Massive Determinant Inequalities}\label{sec:massive}

For $m\ge0$ define
\[
  D_m(r,s)=
  \prod_{i=0}^{r-1}\prod_{j=0}^{s-1}
  \left(m+\lambda_i^P(r)+\lambda_j^P(s)\right),
\]
with the zero factor omitted when $m=0$.  Similarly, for $r,s\ge3$ define
\[
  E_m(r,s)=
  \prod_{i=0}^{r-1}\prod_{j=0}^{s-1}
  \left(m+\lambda_i^C(r)+\lambda_j^C(s)\right),
\]
with the zero factor omitted when $m=0$.

\begin{theorem}\label{thm:D-balancing}
If
\[
  AB=ab,\qquad A\le a\le b\le B,
\]
then
\[
  D_m(a,b)>D_m(A,B)
\]
for every $m\ge0$, unless the pairs are identical.
\end{theorem}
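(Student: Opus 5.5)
We treat $m=0$ and $m>0$ separately.

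\emph{The case $m=0$.} Omitting the zero factor leaves the product of the positive Laplacian eigenvalues of $\Path_r\square\Path_s$. Kirchhoff's theorem therefore gives $D_0(r,s)=rs\,\tau(\Path_r\square\Path_s)$. Since $AB=ab$, the case $m=0$ follows from \Cref{thm:rectangle-balancing}, which gives strict inequality for distinct rectangles.

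\emph{Factorizing $D_m$ for $m>0$.} Fix $m>0$. For each mode $0\le j\le s-1$ of the second factor, define $z_j(s)>0$ by
\[
  2\cosh z_j(s)-2=m+\lambda_j^P(s),
  \qquad\text{equivalently}\qquad
  z_j(s)=\gamma_m(\pi j/s).
\]
The path identity in \Cref{lem:path-cycle-polynomials}, with the $i=0$ factor $x=2\cosh z-2$ included, gives
\[
  \prod_{i=0}^{r-1}\bigl(m+\lambda_i^P(r)+\lambda_j^P(s)\bigr)
  =
  \bigl(2\cosh z_j-2\bigr)\frac{\sinh(rz_j)}{\sinh z_j}.
\]
Hence the only $r$-dependence of $D_m(r,s)$ is through $\prod_j\sinh(rz_j(s))$.

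\emph{Exact gain identity.} If $a=A$ the pairs coincide, so assume $t=a/A=B/b>1$; then $b>A$. Imitating \Cref{thm:torus-exact-gain}, compare both $D_m(a,b)$ and $D_m(A,B)$ with $D_m(A,b)$, using the symmetry $D_m(r,s)=D_m(s,r)$ for the second comparison and writing $w_i=\gamma_m(\pi i/A)$. This gives
\[
  \log\frac{D_m(a,b)}{D_m(A,B)}
  =\sum_{j=0}^{b-1}H_t\bigl(Az_j(b)\bigr)-\sum_{i=0}^{A-1}H_t\bigl(bw_i\bigr),
  \qquad
  H_t(y)=\log\frac{\sinh(ty)}{\sinh y}=(t-1)y+G_t(y).
\]
The linear part equals $(t-1)\bigl(A\,U_b^P(m)-b\,U_A^P(m)\bigr)$. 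It is nonnegative by \Cref{lem:massive-trap}, because $U_s^P(m)/s$ is increasing and $b>A$.

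\emph{Positivity of the residual.} The residual is
\[
  \Gamma=\sum_{j=0}^{b-1}G_t\bigl(Az_j(b)\bigr)-\sum_{i=0}^{A-1}G_t\bigl(bw_i\bigr).
\]
To show $\Gamma>0$, match each $i\in\{0,\dots,A-1\}$ to $j_i=\lfloor bi/A\rfloor$. These indices are distinct because $b>A$, and they satisfy $\pi j_i/b\le\pi i/A$. The function $\gamma_m$ is increasing on $[0,\pi]$, so $z_{j_i}(b)\le w_i$, and hence $Az_{j_i}(b)\le bw_i$. Since $G_t$ is decreasing (\Cref{lem:G}), each matched term of the first sum dominates the corresponding term of the second. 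The $b-A\ge1$ unmatched terms are strictly positive, again by \Cref{lem:G}. Hence $\Gamma>0$, and $D_m(a,b)>D_m(A,B)$.

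This matching is simpler than in the torus case, since no reflection $x\mapsto1-x$ is needed. The step I expect to need the most care is the bookkeeping in the exact gain identity: checking that the prefactors $(2\cosh z_j-2)/\sinh z_j$ cancel exactly in each comparison with $D_m(A,b)$, and that the linear terms assemble precisely into $U_b^P(m)$ and $U_A^P(m)$ with the $j=0$ mode included.
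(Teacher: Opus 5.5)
Your proposal is correct and follows the paper's proof essentially step for step. The shared steps are: the $m=0$ case via $D_0(r,s)=rs\,\tau(\Path_r\square\Path_s)$ and \Cref{thm:rectangle-balancing}; the comparison through $D_m(A,b)$, splitting $H_t=(t-1)y+G_t$ so that the linear part is handled by \Cref{lem:massive-trap}; and the residual handled by the unfolded matching $i\mapsto\lfloor bi/A\rfloor$ together with \Cref{lem:G}. Your $z_j(s)=\gamma_m(\pi j/s)$ is exactly the paper's $f_m(j/s)$, and the prefactor cancellation you flagged does go through, because each comparison keeps one coordinate fixed.
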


\begin{proof}
For $m=0$,
\[
  D_0(r,s)=rs\,\tau(\Path_r\square\Path_s),
\]
so the result follows from \Cref{thm:rectangle-balancing} because $ab=AB$.

Assume $m>0$ and put
\[
  f_m(x)=\arcosh\left(2+\frac m2-\cos\pi x\right),
  \qquad 0\le x\le1.
\]
For fixed $j$, set $z=f_m(j/s)$.  Then $m+\lambda_j^P(s)=2\cosh z-2$, and the path polynomial identity shows that the ratio from side length $A$ to side length $a$ in the first coordinate is $\sinh(az)/\sinh(Az)$.
Let $t=a/A=B/b>1$.  Comparing through $(A,b)$ yields
\[
  \log\frac{D_m(a,b)}{D_m(A,B)}
  =
  \sum_{k=0}^{b-1}H_t\bigl(Af_m(k/b)\bigr)
  -
  \sum_{j=0}^{A-1}H_t\bigl(bf_m(j/A)\bigr),
\]
where $H_t(y)=(t-1)y+G_t(y)$.  Therefore
\[
  \log\frac{D_m(a,b)}{D_m(A,B)}
  =
  (t-1)(AU_b^P(m)-bU_A^P(m))+\Gamma.
\]
Here
\[
  \Gamma=
  \sum_{k=0}^{b-1}G_t\bigl(Af_m(k/b)\bigr)
  -
  \sum_{j=0}^{A-1}G_t\bigl(bf_m(j/A)\bigr).
\]
The main term is nonnegative by \Cref{lem:massive-trap}.  For the residual, match $j$ to $k_j=\lfloor jb/A\rfloor$.  Since $b>A$, these indices are distinct and satisfy $k_j/b\le j/A$.  The function $f_m$ is increasing on $[0,1]$, so $Af_m(k_j/b)\le bf_m(j/A)$.  By \Cref{lem:G}, each matched residual term is at least its partner, and the first sum has $b-A$ unmatched positive terms.  Hence $\Gamma>0$.
\end{proof}

\begin{theorem}\label{thm:E-balancing}
If
\[
  AB=ab,\qquad 3\le A\le a\le b\le B,
\]
then
\[
  E_m(a,b)>E_m(A,B)
\]
for every $m\ge0$, unless the pairs are identical.
\end{theorem}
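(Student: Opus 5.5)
We follow the proof of \Cref{thm:D-balancing}, with the periodic identities in place of the path ones.

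\emph{The case $m=0$.} Here $E_0(r,s)=rs\,\tau(\Cycle_r\square\Cycle_s)$ by Kirchhoff's theorem. Since $ab=AB$, the claim is exactly \Cref{thm:torus-balancing}.

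\emph{Setup for $m>0$.} Put
\[
  g_m(x)=\arcosh\left(2+\frac m2-\cos2\pi x\right),\qquad 0\le x\le1,
\]
so that $g_m(k/s)=\gamma_m(2\pi k/s)$. Then $g_m$ is positive, satisfies $g_m(x)=g_m(1-x)$, and is increasing on $[0,1/2]$. Fix $k$ and set $z=g_m(k/b)$, so that $m+\lambda_k^C(b)=2\cosh z-2$. By the full cycle identity in \Cref{lem:path-cycle-polynomials},
\[
  \prod_{i=0}^{r-1}\bigl(m+\lambda_k^C(b)+\lambda_i^C(r)\bigr)=4\sinh^2(rz/2).
\]
Hence changing the first side length from $A$ to $a=tA$ multiplies this factor by $\bigl[\sinh(tAz/2)/\sinh(Az/2)\bigr]^2$.

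\emph{Exact gain identity.} Let $t=a/A=B/b>1$ and compare both products through $(A,b)$, using the symmetry $E_m(r,s)=E_m(s,r)$. This gives
\[
  \log\frac{E_m(a,b)}{E_m(A,B)}
  =2\sum_{k=0}^{b-1}H_t\Bigl(\tfrac A2 g_m(k/b)\Bigr)
  -2\sum_{j=0}^{A-1}H_t\Bigl(\tfrac b2 g_m(j/A)\Bigr),
\]
with $H_t(y)=(t-1)y+G_t(y)$. Splitting off the linear part,
\[
  \log\frac{E_m(a,b)}{E_m(A,B)}=(t-1)\bigl(AU_b^C(m)-bU_A^C(m)\bigr)+2\Gamma,
\]
where
\[
  \Gamma=\sum_{k=0}^{b-1}G_t\Bigl(\tfrac A2 g_m(k/b)\Bigr)-\sum_{j=0}^{A-1}G_t\Bigl(\tfrac b2 g_m(j/A)\Bigr).
\]
Since $m>0$, the $k=0$ and $j=0$ terms have positive arguments, so $G_t$ is defined on them. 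The main term is nonnegative because $U_s^C(m)/s$ is increasing in $s$ by \Cref{lem:massive-trap} and $b>A$.

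\emph{Positivity of $\Gamma$ (the main obstacle).} This follows the proof of \Cref{lem:torus-residual-matching}, with one change: the index $0$ is now included on both sides. Match $j=0$ with $k=0$; here $\tfrac A2 g_m(0)\le\tfrac b2 g_m(0)$. For $1\le j\le A-1$, use the same indices $k_j$ as before. These are distinct, lie in $\{1,\dots,b-1\}$, and satisfy $\dist(k_j/b,\Z)\le\dist(j/A,\Z)$. By the symmetry and monotonicity of $g_m$ this gives $g_m(k_j/b)\le g_m(j/A)$, and combined with $A<b$,
\[
  \tfrac A2 g_m(k_j/b)\le\tfrac b2 g_m(j/A).
\]
Since $G_t$ is decreasing (\Cref{lem:G}), each matched term of the first sum is at least its partner in the second. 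The first sum also has $b-A>0$ unmatched terms, each positive by \Cref{lem:G}. Hence $\Gamma>0$, and so $E_m(a,b)>E_m(A,B)$.

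What needs checking is that the matching $j\mapsto k_j$ is injective, that it avoids $0$, and that it respects the distance-to-$\Z$ comparison. This was already verified in \Cref{lem:torus-residual-matching}, and adding the pair $0\leftrightarrow0$ does not interfere with it.
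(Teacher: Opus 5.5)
Your proposal is correct and follows the paper's proof essentially step for step. It reduces $m=0$ to \Cref{thm:torus-balancing}, derives the exact gain identity by comparing through $(A,b)$ with the zero-mode cycle identity, gets nonnegativity of the main term from \Cref{lem:massive-trap}, and proves $\Gamma>0$ by pairing $0\leftrightarrow 0$ and reusing the folded matching of \Cref{lem:torus-residual-matching}.
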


\begin{proof}
For $m=0$,
\[
  E_0(r,s)=rs\,\tau(\Cycle_r\square\Cycle_s),
\]
so \Cref{thm:torus-balancing} applies.

Assume $m>0$ and set
\[
  f_m(x)=\arcosh\left(2+\frac m2-\cos2\pi x\right).
\]
For fixed $k$, the cycle product including the zero mode is
\[
  \prod_{i=0}^{r-1}
  \left(m+\lambda_i^C(r)+\lambda_k^C(s)\right)
  =
  4\sinh^2\left(\frac r2 f_m(k/s)\right).
\]
With $t=a/A=B/b>1$, comparison through $(A,b)$ gives
\[
  \log\frac{E_m(a,b)}{E_m(A,B)}
  =
  2\sum_{k=0}^{b-1}H_t\left(\frac A2f_m(k/b)\right)
  -
  2\sum_{j=0}^{A-1}H_t\left(\frac b2f_m(j/A)\right).
\]
Hence
\[
  \log\frac{E_m(a,b)}{E_m(A,B)}
  =
  (t-1)(AU_b^C(m)-bU_A^C(m))+2\Gamma.
\]
Here
\[
  \Gamma=
  \sum_{k=0}^{b-1}G_t\left(\frac A2f_m(k/b)\right)
  -
  \sum_{j=0}^{A-1}G_t\left(\frac b2f_m(j/A)\right).
\]
The main term is nonnegative by \Cref{lem:massive-trap}.

For the residual, match $j=0$ to $k_0=0$.  For $1\le j\le A-1$, use the folded matching from \Cref{lem:torus-residual-matching}.  Since $f_m(x)=f_m(1-x)$ and $f_m$ is increasing on $[0,1/2]$, the matched indices satisfy $f_m(k_j/b)\le f_m(j/A)$.  Thus $(A/2)f_m(k_j/b)\le (b/2)f_m(j/A)$, and the monotonicity of $G_t$ gives a nonnegative matched residual.  The first sum has $b-A$ unmatched positive terms, so $\Gamma>0$.
\end{proof}

\section{Higher-Dimensional Pure Products}\label{sec:higher-dimensional}

\subsection{Pure Path Products}

For $\mathbf n=(n_1,\ldots,n_d)$ write
\[
  \Path_{\mathbf n}=
  \Path_{n_1}\square\cdots\square\Path_{n_d}.
\]

\begin{theorem}\label{thm:path-higher}
Let $H=\Path_{n_3}\square\cdots\square\Path_{n_d}$, with $H$ interpreted as the one-vertex graph when $d=2$.  If
\[
  AB=ab,\qquad A\le a\le b\le B,
\]
then
\[
  \tau(\Path_a\square\Path_b\square H)>
  \tau(\Path_A\square\Path_B\square H)
\]
unless the pairs are identical.
\end{theorem}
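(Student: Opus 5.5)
Write the spectrum of $H$ as $\mu_0=0<\ldots$, i.e. the multiset of sums $\mu=\sum_{\ell\ge3}\lambda_{i_\ell}^P(n_\ell)$ over multi-indices, with $|V(H)|=n_3\cdots n_d$. The Laplacian spectrum of $\Path_r\square\Path_s\square H$ is the multiset of sums $\lambda_i^P(r)+\lambda_j^P(s)+\mu$. Grouping the eigenvalues by the $H$-eigenvalue $\mu$, Kirchhoff's theorem gives
\[
  rs\,|V(H)|\,\tau(\Path_r\square\Path_s\square H)
  =
  D_0(r,s)\prod_{\mu\in\Spec^+(H)}D_\mu(r,s),
\]
where the $\mu=0$ group (the single zero mode of $H$) contributes exactly $D_0(r,s)$, with its zero factor omitted as in the definition, and each positive $\mu$ (with multiplicity) contributes the full product $D_\mu(r,s)$, all of whose factors are positive. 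I will write this identity for both pairs $(a,b)$ and $(A,B)$.

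Since $ab=AB$, the prefactors $rs\,|V(H)|$ coincide on the two sides, so
\[
  \frac{\tau(\Path_a\square\Path_b\square H)}{\tau(\Path_A\square\Path_B\square H)}
  =
  \frac{D_0(a,b)}{D_0(A,B)}\prod_{\mu\in\Spec^+(H)}\frac{D_\mu(a,b)}{D_\mu(A,B)}.
\]
By \Cref{thm:D-balancing}, each factor is strictly greater than $1$ when the pairs differ: the case $m=0$ is the two-dimensional rectangle theorem, and the case $m=\mu>0$ is the massive case. All quantities are positive, so the product is strictly greater than $1$.

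When $d=2$, $H$ is a single vertex with no positive eigenvalues, and only the $D_0$ factor appears. This case then reduces directly to \Cref{thm:rectangle-balancing}.

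The only real care needed is bookkeeping of the zero mode. The global zero eigenvalue lies only in the $\mu=0$ group, and it is exactly the omitted factor of $D_0$, so no other group loses a factor. One should also check that the normalization $|V|=rs|V(H)|$ cancels. No analytic obstacle remains, because \Cref{thm:D-balancing} already holds uniformly for every $m\ge0$.
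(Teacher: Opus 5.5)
Your proposal is correct and is essentially the paper's proof: group the spectrum by the eigenvalues $\mu$ of $H$ to get $ab\,|V(H)|\,\tau(\Path_a\square\Path_b\square H)=D_0(a,b)\prod_{\mu\in\Spec^+(H)}D_\mu(a,b)$, use $ab=AB$ to cancel the normalizations, and apply \Cref{thm:D-balancing} to each factor. Your zero-mode bookkeeping matches the paper's as well: $H$ is connected, so its single zero mode accounts for exactly the omitted factor of $D_0$.
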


\begin{proof}
Let the positive Laplacian eigenvalues of $H$ be $\mu\in\Spec^+(H)$.  Product spectra and Kirchhoff's theorem give
\[
  ab|V(H)|\,
  \tau(\Path_a\square\Path_b\square H)
  =
  D_0(a,b)\prod_{\mu\in\Spec^+(H)}D_\mu(a,b).
\]
Indeed, the zero mode of $H$ contributes exactly the two-dimensional determinant $D_0(a,b)$, and each positive eigenvalue $\mu$ contributes the massive determinant $D_\mu(a,b)$.  The same formula holds for $(A,B)$.  Since $ab=AB$, the normalizing factors are equal, and every determinant factor strictly increases by \Cref{thm:D-balancing} unless the pair is unchanged.
\end{proof}

\subsection{Pure Cycle Products}

For $n_i\ge3$ write
\[
  \Cycle_{\mathbf n}=
  \Cycle_{n_1}\square\cdots\square\Cycle_{n_d}.
\]

\begin{theorem}\label{thm:cycle-higher}
Let $H=\Cycle_{n_3}\square\cdots\square\Cycle_{n_d}$, with $H$ interpreted as the one-vertex graph when $d=2$.  If
\[
  AB=ab,\qquad 3\le A\le a\le b\le B,
\]
then
\[
  \tau(\Cycle_a\square\Cycle_b\square H)>
  \tau(\Cycle_A\square\Cycle_B\square H)
\]
unless the pairs are identical.
\end{theorem}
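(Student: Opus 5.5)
The plan is to mirror the proof of \Cref{thm:path-higher}, replacing the path determinants $D_m$ by the cycle determinants $E_m$ and invoking \Cref{thm:E-balancing} in place of \Cref{thm:D-balancing}.

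First, factor the Laplacian spectrum. The spectrum of $\Cycle_a\square\Cycle_b\square H$ is the multiset of sums $\lambda_i^C(a)+\lambda_j^C(b)+\mu$, where $\mu$ ranges over $\Spec(H)$ with multiplicity. Group these eigenvalues according to the $H$-eigenvalue $\mu$.

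\begin{itemize}
\item The zero eigenvalue of $H$ has multiplicity one, since $H$ is connected; when $d=2$ it is the single eigenvalue of the one-vertex graph. Its block is the two-dimensional spectrum. After the global zero is removed, its positive product is $E_0(a,b)$.
\item Each positive $\mu$ contributes the full block $\prod_{i,j}(\mu+\lambda_i^C(a)+\lambda_j^C(b))=E_\mu(a,b)$, with no factor omitted because $\mu>0$.
\end{itemize}

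Kirchhoff's theorem then gives
\[
  ab\,|V(H)|\,\tau(\Cycle_a\square\Cycle_b\square H)
  =
  E_0(a,b)\prod_{\mu\in\Spec^+(H)}E_\mu(a,b),
\]
and the same identity holds with $(A,B)$ in place of $(a,b)$. The graph $H$ and its spectrum are unchanged between the two sides. Since $ab=AB$, the left-hand normalizations $ab\,|V(H)|$ and $AB\,|V(H)|$ coincide.

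Next, apply \Cref{thm:E-balancing} factor by factor. Its hypotheses $AB=ab$ and $3\le A\le a\le b\le B$ are exactly those of the present statement. It gives $E_m(a,b)>E_m(A,B)$ for every $m\ge0$ unless the pairs are identical. All factors are positive, so the product over $m\in\{0\}\cup\Spec^+(H)$ strictly increases. The comparison is strict already from the $m=0$ factor, so it does not depend on $H$ having any positive eigenvalues. Dividing by the common normalization yields the claimed strict inequality. If the pairs are identical the graphs coincide, and there is nothing to prove.

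The only point needing care is bookkeeping rather than analysis. One must check that exactly one zero eigenvalue is omitted in total, namely the zero of the $m=0$ block, which matches the convention in the definition of $E_0$. One must also check that the hypotheses $n_i\ge3$ on the other factors make $H$ a well-defined connected product of cycles, so that its zero eigenvalue is simple. I expect no genuine obstacle: all the analytic difficulty has been absorbed into \Cref{thm:E-balancing}, specifically the monotonicity from \Cref{lem:massive-trap} and the folded residual matching.
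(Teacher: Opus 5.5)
Your proposal is correct and is essentially the paper's own proof. Both slice the spectrum of $\Cycle_a\square\Cycle_b\square H$ by the eigenvalues of $H$ to obtain $ab\,|V(H)|\,\tau = E_0(a,b)\prod_{\mu\in\Spec^+(H)}E_\mu(a,b)$. Both then use $ab=AB$ to equate the normalizations and apply \Cref{thm:E-balancing} factor by factor.
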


\begin{proof}
The same slicing argument gives
\[
  ab|V(H)|\,
  \tau(\Cycle_a\square\Cycle_b\square H)
  =
  E_0(a,b)\prod_{\mu\in\Spec^+(H)}E_\mu(a,b).
\]
The zero mode of $H$ contributes $E_0(a,b)$, and each positive mode contributes $E_\mu(a,b)$.  Since $ab=AB$, the normalizing factors agree, and \Cref{thm:E-balancing} increases every factor unless the pair is unchanged.
\end{proof}

\begin{definition}
A divisor tuple $(n_1,\ldots,n_d)$ of fixed product is \emph{pairwise balanced} if no pair of coordinates can be replaced by a strictly more balanced divisor pair with the same product; that is, for no $i<j$ with $\min(n_i,n_j)=A$ and $\max(n_i,n_j)=B$ is there a factorization $AB=ab$ with $A<a\le b<B$.
\end{definition}

\begin{corollary}\label{cor:pairwise-maximizers}
Every fixed-volume maximizer in the pure path class is pairwise balanced.  Every fixed-volume maximizer in the pure cycle class, with all side lengths at least $3$, is pairwise balanced.  If a fixed product has a unique terminal pairwise-balanced tuple up to permutation, then that tuple is the unique maximizer up to permutation.
\end{corollary}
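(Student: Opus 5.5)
The plan is a contradiction argument driven by \Cref{thm:path-higher,thm:cycle-higher}, together with the finiteness of the search space.

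Fix the volume $N$ and dimension $d$. The set of admissible tuples $(n_1,\ldots,n_d)$ with $\prod n_i=N$ is finite: all $n_i\ge1$ in the path class, and all $n_i\ge3$ in the cycle class. Hence a maximizer of $\tau$ exists whenever the class is nonempty.

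Suppose a maximizer $\mathbf n$ is not pairwise balanced. Then there are indices $i<j$ with $A=\min(n_i,n_j)$ and $B=\max(n_i,n_j)$, and a factorization $AB=ab$ with $A<a\le b<B$. Since the Cartesian product is commutative up to isomorphism, permute coordinates so that the pair $(i,j)$ occupies the first two slots. The remaining factors form $H$, with $H$ the one-vertex graph when $d=2$.

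Replace $(A,B)$ by $(a,b)$. The product is unchanged, so the new tuple has the same volume. In the cycle class we have $a>A\ge3$, so the new tuple is still admissible. \Cref{thm:path-higher}, respectively \Cref{thm:cycle-higher}, gives a strict increase in $\tau$ because the pairs are distinct. This contradicts maximality, so every maximizer is pairwise balanced.

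For the uniqueness clause, the key observation is that the set of maximizers is nonempty and every maximizer is pairwise balanced. So if the pairwise-balanced tuple is unique up to permutation, every maximizer equals that tuple up to permutation, which is exactly the claim. Here "terminal" should be read as "pairwise balanced". If instead it is meant as a terminal state of iterated balancing moves, I would add one sentence: each move strictly increases $\tau$ over a finite set, so iteration terminates, and terminal states are precisely the pairwise-balanced tuples.

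I expect no genuine obstacle. The only points requiring care are the admissibility check $a\ge3$ in the cycle class and the reduction of an arbitrary pair $(i,j)$ to the first two coordinates via commutativity of $\square$.
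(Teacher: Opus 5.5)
Your proposal is correct and follows the paper's argument: a nontrivial pairwise move (which stays admissible in the cycle class since $a>A\ge3$) strictly increases $\tau$ by \Cref{thm:path-higher} or \Cref{thm:cycle-higher}, contradicting maximality, and the uniqueness clause then follows from finiteness of the divisor tuples. The only difference is cosmetic. You get the uniqueness clause directly from the existence of a maximizer (with $\tau$ itself as the potential if termination is needed). The paper instead proves termination of balancing moves via the logarithmic variance $\sum_i(\log n_i-\bar u)^2$ and then argues through strictly increasing paths to the terminal tuple.
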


\begin{proof}
If a maximizing tuple admitted a nontrivial balancing of two coordinates, \Cref{thm:path-higher} or \Cref{thm:cycle-higher} would strictly increase $\tau$ without changing the volume.  Let $\bar u=d^{-1}\sum_{i=1}^d\log n_i=d^{-1}\log(n_1\cdots n_d)$, which is unchanged under product-preserving moves.  Repeated strict balancing moves terminate because the quantity
\[
  \sum_{i=1}^d(\log n_i-\bar u)^2
\]
strictly decreases under each move while the product is fixed, and there are only finitely many divisor tuples of a fixed product.  Indeed, for the two changed logarithmic side lengths $x\le y$ with fixed sum, their contribution to the global log-variance is
\[
  (x-\bar u)^2+(y-\bar u)^2
  =
  2\left(\frac{x+y}{2}-\bar u\right)^2+\frac{(y-x)^2}{2},
\]
so replacing the pair by a closer pair strictly decreases this contribution.  If the terminal tuple is unique, every starting tuple has a strictly increasing path to it unless it is already terminal.
\end{proof}

\subsection{Obstruction to Pairwise Methods}

\begin{proposition}\label{prop:perfect-power-obstruction}
The tuple $(50,54,64,75)$ has product $60^4$ and is pairwise balanced.  Consequently, perfect-power volume alone does not imply that $(n,\ldots,n)$ is the unique terminal tuple under pairwise balancing.
\end{proposition}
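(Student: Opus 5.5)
The proof is a finite verification followed by a one-line deduction. I will first check the product by prime factorization:
\[
  50=2\cdot5^2,\quad 54=2\cdot3^3,\quad 64=2^6,\quad 75=3\cdot5^2 .
\]
The product is therefore $2^{8}3^{4}5^{4}=(2^2\cdot3\cdot5)^4=60^4$.

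Next I will verify pairwise balance straight from the definition. For each of the six pairs $A<B$ I must rule out any divisor $a$ of $AB$ with $A<a\le\sqrt{AB}$, since then $b=AB/a$ would give a strictly more balanced pair. Every $AB$ here has only the primes $2,3,5$, so any candidate $a$ with a prime factor $\ge7$ fails at once. Moreover the window $(A,\sqrt{AB}]$ is short in every case, so only a handful of integers need checking:
\begin{itemize}
\item $(50,54)$: $AB=2700$, window $\{51\}$; $51=3\cdot17$ fails.
\item $(50,64)$: $AB=3200=2^7 5^2$, window $51,\dots,56$; these contain $3,13,11,7$ or $17$, or (for $54$) a factor $3$ absent from $3200$.
\item $(50,75)$: $AB=3750=2\cdot3\cdot5^4$, window $51,\dots,61$; the only $\{2,3,5\}$-smooth candidates are $54$ and $60$, which need $3^3$ and $2^2$ respectively.
\item $(54,64)$: $AB=3456=2^7 3^3$, window $55,\dots,58$; these involve $5,7,19,29$.
\item $(54,75)$: $AB=4050=2\cdot3^4 5^2$, window $55,\dots,63$; the smooth candidates $60$ and $64$ lie either outside the exponent bounds or outside the window.
\item $(64,75)$: $AB=4800=2^6\cdot3\cdot5^2$, window $65,\dots,69$; these involve $13,11,67,17,23$.
\end{itemize}
Hence no pair admits a nontrivial balancing move, and the tuple is terminal.

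For the consequence, I note that the uniform tuple $(60,60,60,60)$ is trivially pairwise balanced, since every pair is already equal. It is distinct from $(50,54,64,75)$ even up to permutation. So the volume $60^4$ has at least two terminal pairwise-balanced tuples. The uniqueness hypothesis in \Cref{cor:pairwise-maximizers} therefore fails, and pairwise balancing alone cannot single out $(n,\ldots,n)$ at perfect-power volume.

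There is no real obstacle. The only care needed is the exhaustive divisor check in the six windows, which is mechanical and could equally be delegated to a short computation.
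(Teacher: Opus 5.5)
Your proposal is correct and follows essentially the same route as the paper. Like the paper, you confirm $50\cdot54\cdot64\cdot75=2^8\cdot3^4\cdot5^4=60^4$ and check that for each of the six pairs $x<y$ no divisor of $xy$ lies in $(x,\sqrt{xy}\,]$; you add the step, left implicit in the paper, that $(60,60,60,60)$ is also terminal. One cosmetic slip: in the $(50,64)$ window you omit $53$ from your list of offending primes, but since $53$ is a prime exceeding $7$, your smoothness remark already rules it out.
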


\begin{proof}
First, $50\cdot54\cdot64\cdot75=12960000=60^4$.  For a pair $x<y$, a nontrivial balancing move exists exactly when $xy$ has a divisor $a$ with $x<a\le\sqrt{xy}$, in which case the replacement is $(a,xy/a)$.  The six pairs in the displayed tuple have no such divisor:
\[
\begin{array}{c|c|c}
(x,y)&xy&\{a:a\mid xy,\ x<a\le\sqrt{xy}\}\\
\hline
(50,54)&2700&\varnothing\\
(50,64)&3200&\varnothing\\
(50,75)&3750&\varnothing\\
(54,64)&3456&\varnothing\\
(54,75)&4050&\varnothing\\
(64,75)&4800&\varnothing
\end{array}
\]
Thus no pair of coordinates can be replaced by a strictly more balanced divisor pair.
\end{proof}

\begin{remark}
\Cref{prop:perfect-power-obstruction} is an obstruction to the pairwise-divisor method, not to the perfect-power maximizer statement itself.  The obstruction is purely number-theoretic: the relevant real balancing moves exist, but no admissible intermediate integer divisor is available for any pair in the tuple.  The finite pairwise reduction may stop at a nonuniform terminal tuple even when the total volume is a perfect power, so a proof of perfect-power uniqueness has to use additional information.
\end{remark}

\subsection{Heat-Trace Majorization and Perfect Powers}

We now prove a logarithmic majorization theorem by a method that does not
depend on pairwise divisor moves.  The obstruction in
\Cref{prop:perfect-power-obstruction} shows that pairwise balancing can stop at
nonuniform terminal tuples even when the total volume is a perfect power.  To
compare whole fixed-product families at once, we use the heat-trace
representation of the Laplacian determinant.  The required input is a
one-dimensional convexity statement after extending path and cycle heat traces
from integer side lengths to positive real side lengths.

The probabilistic notation below is only a compact way to encode positivity and
variance identities for heat kernels and theta-type sums.  The extremal
comparison remains deterministic; random variables are used as bookkeeping for
log-convexity.

For a connected graph $G$ with Laplacian eigenvalues $0=\lambda_0<\lambda_1\le\cdots\le\lambda_{N-1}$, write
\[
  Q(G)=\prod_{j=1}^{N-1}\lambda_j.
\]
Thus $\tau(G)=Q(G)/N$.

\begin{lemma}\label{lem:heat-trace-determinant}
If $G$ and $H$ are connected graphs with the same number of vertices, then
\[
  \log Q(H)-\log Q(G)
  =
  \int_0^\infty
  \left[
  \Tr(e^{-tL_G})-\Tr(e^{-tL_H})
  \right]\frac{\dd t}{t}.
\]
\end{lemma}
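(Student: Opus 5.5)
The plan is to use the Frullani-type identity
\[
  \log\lambda=\int_0^\infty\bigl(e^{-t}-e^{-t\lambda}\bigr)\frac{\dd t}{t},
  \qquad \lambda>0,
\]
and sum it over the positive spectra of the two graphs. First I will verify this scalar identity. For $0<\delta<T$, the truncated integral is
\[
  \int_\delta^T\bigl(e^{-t}-e^{-\lambda t}\bigr)\frac{\dd t}{t}
  =\int_\delta^{\lambda\delta}e^{-u}\frac{\dd u}{u}-\int_T^{\lambda T}e^{-u}\frac{\dd u}{u},
\]
obtained by substituting $u=\lambda t$ in the second piece. Letting $\delta\to0$ sends the first term to $\log\lambda$, and letting $T\to\infty$ sends the second term to $0$. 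The integrand $(e^{-t}-e^{-\lambda t})/t$ is bounded near $t=0$ and exponentially small at infinity, so the integral converges absolutely.

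Next, let the positive eigenvalues be $\lambda_1,\dots,\lambda_{N-1}$ for $L_G$ and $\mu_1,\dots,\mu_{N-1}$ for $L_H$. Connectedness and the common vertex count give both lists the same length $N-1$, with exactly one zero eigenvalue each. Then
\[
  \log Q(H)-\log Q(G)=\sum_{j=1}^{N-1}\bigl(\log\mu_j-\log\lambda_j\bigr).
\]
Applying the scalar identity termwise, the $e^{-t}$ reference terms cancel in each difference, which gives
\[
  \log\mu_j-\log\lambda_j=\int_0^\infty\bigl(e^{-t\lambda_j}-e^{-t\mu_j}\bigr)\frac{\dd t}{t}.
\]
Summing these finitely many absolutely convergent integrals shows that the total equals $\int_0^\infty\sum_j(e^{-t\lambda_j}-e^{-t\mu_j})\,\dd t/t$.

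Finally, I will identify the integrand with the difference of heat traces. Since $\Tr(e^{-tL_G})=1+\sum_j e^{-t\lambda_j}$, and similarly for $H$, the two zero modes each contribute $1$ and cancel in $\Tr(e^{-tL_G})-\Tr(e^{-tL_H})$. This cancellation is the one place where the hypotheses are used. Equal vertex counts ensure the traces agree at $t=0$, so the integrand is $O(1)$ near $t=0$ rather than $O(1/t)$. Connectedness ensures the large-$t$ limits of the traces are both $1$, so the difference decays like $e^{-ct}$ with $c=\min(\lambda_1,\mu_1)>0$. The main obstacle is only this convergence bookkeeping at the two endpoints; once it is checked, the rest is a finite sum of scalar identities.
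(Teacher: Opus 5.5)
Your proposal is correct and is essentially the paper's proof: apply the Frullani identity to the paired positive eigenvalues of $L_H$ and $L_G$ and sum. Connectedness makes each graph contribute exactly one cancelling zero mode, and equal vertex counts give the two positive spectra the same length $N-1$. You additionally verify the scalar identity and the convergence at both endpoints, which the paper only asserts.
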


\begin{proof}
Apply $\log a-\log b=\int_0^\infty (e^{-bt}-e^{-at})\,\dd t/t$ to the positive Laplacian eigenvalues of $H$ and $G$ and sum.  The two zero modes cancel because both graphs are connected, and the equal vertex counts cancel the leading small-$t$ term of the traces.  The integral is therefore convergent at $0$, and convergence at $\infty$ is exponential on the positive eigenspaces.
\end{proof}

Let $h_X(m,t)=\Tr(e^{-tL(X_m)})$ for $X=P$ or $X=C$.  Product heat traces factor:
\[
  \Tr(e^{-tL(X_{n_1}\square\cdots\square X_{n_d})})
  =
  \prod_{i=1}^d h_X(n_i,t).
\]
Thus it is enough to prove a one-dimensional strict convexity statement.  If,
for each fixed $t>0$, the real-side-length extension satisfies that
$u\mapsto\log\mathcal H_X(e^u,t)$ is strictly convex and agrees with $h_X(m,t)$
at integer $m$, then Karamata's inequality compares the full product heat traces
under logarithmic majorization.  The preceding lemma then converts that
comparison into a determinant comparison.

\begin{definition}
For $x=(x_1,\ldots,x_d)\in\R^d$, let
$x_1^\downarrow\ge\cdots\ge x_d^\downarrow$ be its decreasing rearrangement.
For positive integer tuples $\mathbf n=(n_1,\ldots,n_d)$ and
$\mathbf m=(m_1,\ldots,m_d)$ with equal product, say that
$\mathbf n$ \emph{log-majorizes} $\mathbf m$ if
$(\log n_1,\ldots,\log n_d)$ majorizes
$(\log m_1,\ldots,\log m_d)$, that is, if
\[
  \sum_{i=1}^k(\log n_i)^\downarrow
  \ge
  \sum_{i=1}^k(\log m_i)^\downarrow
  \qquad 1\le k<d,
\]
and equality holds for $k=d$.
\end{definition}

\begin{lemma}\label{lem:fractional-cycle-heat}
For $t>0$ and $x>0$, define
\[
  \mathcal H_C(x,t)
  =
  x e^{-2t}\sum_{q\in\Z}I_{\lvert q\rvert x}(2t),
\]
where $I_\nu$ is the modified Bessel function of the first kind.  If $m\ge3$ is an integer, then $\mathcal H_C(m,t)=h_C(m,t)$.  Moreover
\[
  u\longmapsto \log \mathcal H_C(e^u,t)
\]
is strictly convex on $\R$.
\end{lemma}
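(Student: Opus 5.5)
The plan has two parts: an exact identity at integer side lengths, and a convexity argument in the logarithmic variable $u=\log x$.

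\emph{Integer agreement.} For an integer $m\ge3$ we have $h_C(m,t)=\sum_{k=0}^{m-1}e^{-t\lambda_k^C(m)}=e^{-2t}\sum_{k=0}^{m-1}e^{2t\cos(2\pi k/m)}$. We insert the Jacobi--Anger expansion $e^{z\cos\theta}=\sum_{n\in\Z}I_n(z)e^{in\theta}$ with $z=2t$ and sum over the periodic grid $\theta=2\pi k/m$. As in the filtering step of \Cref{lem:periodic-trap}, only the indices $n=qm$ survive, and each contributes $m\,I_{qm}(2t)$. Since $I_{-n}=I_n$ for integer $n$, this gives $h_C(m,t)=m e^{-2t}\sum_{q}I_{|q|m}(2t)=\mathcal H_C(m,t)$. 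For real $x>0$, convergence of the defining series follows from the bound $0<I_\nu(2t)\le t^\nu e^{t^2}/\Gamma(\nu+1)$ for $\nu\ge0$, which decays superexponentially in $q$. Thus $\mathcal H_C(\cdot,t)$ is a well-defined, positive, smooth interpolation.

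\emph{Convexity.} We write $\log\mathcal H_C(e^u,t)=u-2t+\log S(e^u)$, where
\[
  S(x)=I_0(2t)+2\sum_{q\ge1}I_{qx}(2t).
\]
Since $u$ is linear, it suffices to prove that $u\mapsto\log S(e^u)$ is strictly convex. We expand each term by the power series
\[
  I_{qx}(2t)=\sum_{k\ge0}\frac{t^{2k+qx}}{k!\,\Gamma(k+qx+1)},
\]
so that $S(e^u)$ becomes a positive sum of terms $w_{q,k}(u)$. This suggests the probabilistic bookkeeping announced earlier: let $(Q,K)$ be a random pair with law proportional to $w_{q,k}(u)$. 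Then $(\log S)''$ equals $\mathbb E[\partial_u^2\log w]+\operatorname{Var}(\partial_u\log w)$. For $q\ge1$ we have
\[
  \partial_u\log w_{q,k}=qx\bigl(\log t-\psi(k+qx+1)\bigr),\qquad x=e^u,
\]
and the second derivative involves $-\,q^2x^2\psi'$ together with the first-derivative term. So the variance term has to dominate the negative expectation term.

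\emph{Main obstacle.} The sign control in this last step is where the proof could fail. The $k$-mixture variance is large precisely when $t$ is large, and the $q=0$ term is constant, which pins one value of $\partial_u\log w$ at $0$; this is the source of positive variance. If the direct estimate resists, I would try an alternative route first: an integral representation of $\nu\mapsto I_\nu(2t)$ as a Laplace-type transform with a positive kernel, namely Schläfli's formula with the oscillatory part controlled by the periodic summation over $q$. That would exhibit $S(e^u)$ as a positive mixture of functions that are log-convex in $u$. Log-convexity is preserved under positive mixtures, by Cauchy--Schwarz, which is exactly the variance identity, and strictness would come from nondegeneracy of the mixture.
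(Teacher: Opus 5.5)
Your integer-agreement step is correct. It is the same roots-of-unity filtering of the Fourier--Bessel expansion that the paper uses.

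The convexity step, which is the substance of the lemma, is not proved. In the power-series route you correctly reduce to showing $\mathbb E[\partial_u^2\log w]+\operatorname{Var}(\partial_u\log w)>0$. However, the individual terms $w_{q,k}$ are not log-convex in $u$: the contribution $-q^2x^2\psi'(k+qx+1)$ is negative. You give no estimate showing the variance wins, so this cannot be left as an ``obstacle.'' The fallback does not supply the missing positivity either. Schl\"afli's integral
\[
  I_\nu(z)=\frac1\pi\int_0^\pi e^{z\cos\theta}\cos(\nu\theta)\dd\theta-\frac{\sin(\nu\pi)}{\pi}\int_0^\infty e^{-z\cosh w-\nu w}\dd w
\]
is not a positive kernel in $\nu$. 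If you sum over $\nu=|q|x$, Poisson summation turns the first piece into a sum of $e^{z\cos(2\pi k/x)}$ over lattice points $2\pi k/x\in[-\pi,\pi]$. That sum is discontinuous in $x$, since a new point enters each time $x$ passes an even integer. The second piece becomes a sign-indefinite correction that cancels those jumps. Nothing in this decomposition is a positive mixture of log-convex functions of $u$, so the appeal to ``log-convexity is preserved under positive mixtures'' has nothing to act on.

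The missing idea is a positive representation in which the order enters through a Gaussian factor. The paper uses the Hartman--Watson law: for each $R>0$ there is a probability measure $\mu_R$ on $(0,\infty)$ with $I_\nu(R)/I_0(R)=\int_0^\infty e^{-\nu^2\sigma/2}\mu_R(\dd\sigma)$ for $\nu\ge0$. Applied with $R=2t$ and $\nu=|q|x$, this gives $\mathcal H_C(x,t)=e^{-2t}I_0(2t)\int\Theta_{\sigma/2}(x)\,\mu_{2t}(\dd\sigma)$, where $\Theta_a(x)=x\sum_{q\in\Z}e^{-aq^2x^2}$.

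Even then, strict log-convexity of $u\mapsto\Theta_a(e^u)$ needs its own argument. Put $c=ae^{2u}$, $\theta(c)=\sum_{q\in\Z}e^{-cq^2}$ and $A(c)=-c\theta'(c)/\theta(c)$; then the second derivative is $-4cA'(c)$. The paper shows $A'(c)<0$ in two regimes:
\begin{itemize}
\item for $c\ge\pi$, by a discrete-Gaussian variance bound;
\item for $c<\pi$, by Jacobi inversion, which gives $A(c)+A(\pi^2/c)=\tfrac12$ and hence $A'(c)=(\pi^2/c^2)A'(\pi^2/c)$.
\end{itemize}
Only after this does your mixture remark finish the proof, via H\"older together with pointwise strictness on the support of $\mu_{2t}$.
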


\begin{proof}
We first check agreement with the ordinary cycle heat trace at integer side
lengths.  The Fourier--Bessel expansion
\[
  e^{r\cos\theta}=\sum_{\ell\in\Z}I_\ell(r)e^{i\ell\theta}
\]
and the roots-of-unity filter give, for integer $m\ge3$,
\[
  h_C(m,t)
  =
  e^{-2t}\sum_{k=0}^{m-1}e^{2t\cos(2\pi k/m)}
  =
  m e^{-2t}\sum_{q\in\Z}I_{\lvert q\rvert m}(2t).
\]
This is exactly $\mathcal H_C(m,t)$.

We next represent the fractional trace as a positive mixture of Gaussian theta
factors.  We use the Hartman--Watson representation in the following probability-measure form~\cite{HartmanWatson1974,Yor1980}:
for each $R>0$ there is a probability measure $\mu_R$ supported on
$(0,\infty)$ such that, for $\nu\ge0$,
\[
  \frac{I_\nu(R)}{I_0(R)}
  =
  \int_0^\infty e^{-\nu^2\sigma/2}\,\mu_R(\dd\sigma).
\]
The Bessel series is absolutely convergent, and all terms in the representation
are nonnegative, so Tonelli's theorem permits applying this formula term by
term.  Therefore
\[
  \mathcal H_C(x,t)
  =
  e^{-2t}I_0(2t)
  \int_0^\infty
  x\sum_{q\in\Z}e^{-\sigma q^2x^2/2}\,\mu_{2t}(\dd\sigma).
\]
It remains to prove logarithmic convexity for the Gaussian theta factors and
then pass to this positive mixture.

For $a>0$ put
\[
  \Theta_a(x)=x\sum_{q\in\Z}e^{-a q^2x^2}.
\]
Let
\[
  \theta(c)=\sum_{q\in\Z}e^{-cq^2},
  \qquad
  A(c)=-c\frac{\theta'(c)}{\theta(c)}.
\]
Then $\Theta_a(e^u)=e^u\theta(ae^{2u})$.  If $K$ is the discrete Gaussian
random variable with
\[
  \mathbb P_c(K=k)=\frac{e^{-ck^2}}{\theta(c)}
\]
and $Y=K^2$, then $A(c)=c\,\mathbb E_c[Y]$ and
\[
  A'(c)=\mathbb E_c[Y]-c\,\operatorname{Var}_c(Y).
\]
Differentiating $\log\Theta_a(e^u)$ gives
\[
  \frac{\dd^2}{\dd u^2}\log\Theta_a(e^u)
  =
  -4cA'(c),
  \qquad c=ae^{2u}.
\]
We claim that $A'(c)<0$ for every $c>0$.

For $c\ge\pi$, since $Y^2\ge Y$,
\[
  \operatorname{Var}_c(Y)
  \ge
  \mathbb E_c[Y](1-\mathbb E_c[Y]).
\]
Also
\[
  \mathbb E_c[Y]
  \le
  2\sum_{k\ge1}k^2e^{-\pi k}
  =
  \frac{2e^{-\pi}(1+e^{-\pi})}{(1-e^{-\pi})^3}
  <1-\frac1\pi
  \le
  1-\frac1c.
\]
Hence
\[
  A'(c)
  \le
  \mathbb E_c[Y]\bigl(1-c(1-\mathbb E_c[Y])\bigr)
  <0.
\]

For $0<c<\pi$, Poisson summation gives
\[
  \theta(c)=\sqrt{\frac{\pi}{c}}\,\theta\left(\frac{\pi^2}{c}\right),
\]
and therefore
\[
  A(c)+A\left(\frac{\pi^2}{c}\right)=\frac12.
\]
Differentiating,
\[
  A'(c)=\frac{\pi^2}{c^2}A'\left(\frac{\pi^2}{c}\right)<0,
\]
because $\pi^2/c>\pi$.  Thus $\log\Theta_a(e^u)$ is strictly convex.

Finally, positive mixtures of log-convex functions are log-convex by H\"older's
inequality, and here strictness is preserved by the nondegenerate support of the
Hartman--Watson measure.  Indeed, let $u\ne v$, $0<\lambda<1$, and
$w=(1-\lambda)u+\lambda v$.  For every $\sigma>0$, strict log-convexity of
$\Theta_{\sigma/2}$ gives
\[
  \Theta_{\sigma/2}(e^w)
  <
  \Theta_{\sigma/2}(e^u)^{1-\lambda}
  \Theta_{\sigma/2}(e^v)^\lambda.
\]
Since $\mu_{2t}$ is supported on $(0,\infty)$, this strict inequality holds on
a set of full $\mu_{2t}$-measure.  Integrating it and then applying H\"older gives
\[
  \int \Theta_{\sigma/2}(e^w)\,\mu_{2t}(\dd\sigma)
  <
  \left(\int \Theta_{\sigma/2}(e^u)\,\mu_{2t}(\dd\sigma)\right)^{1-\lambda}
  \left(\int \Theta_{\sigma/2}(e^v)\,\mu_{2t}(\dd\sigma)\right)^\lambda.
\]
Multiplication by the positive constant $e^{-2t}I_0(2t)$ does not affect
log-convexity.  This proves the asserted strict convexity of
$\log\mathcal H_C(e^u,t)$.
\end{proof}

\begin{lemma}\label{lem:fractional-path-heat}
For $t>0$ and $x>0$, define
\[
  \mathcal H_P(x,t)
  =
  \frac12\mathcal H_C(2x,t)+\frac12(1-e^{-4t}).
\]
If $m$ is a positive integer, then $\mathcal H_P(m,t)=h_P(m,t)$.  Moreover
\[
  u\longmapsto \log \mathcal H_P(e^u,t)
\]
is strictly convex on $\R$.
\end{lemma}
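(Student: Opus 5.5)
The plan is to prove the two assertions separately: the integer agreement by a direct comparison of spectra, and the strict convexity by reduction to \Cref{lem:fractional-cycle-heat} together with the fact that adding a positive constant preserves strict log-convexity.

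\emph{Agreement at integers.}
The path spectrum gives
\[
  h_P(m,t)=e^{-2t}\sum_{k=0}^{m-1}e^{2t\cos(\pi k/m)}.
\]
On the $2m$-point grid, the cycle sum $\sum_{k=0}^{2m-1}e^{2t\cos(\pi k/m)}$ splits into three parts:
\begin{itemize}
\item the $k=0$ term, equal to $e^{2t}$;
\item the $k=m$ term, equal to $e^{-2t}$;
\item twice the sum over $1\le k\le m-1$, by the symmetry $k\leftrightarrow 2m-k$.
\end{itemize}
Hence
\[
  \sum_{k=0}^{m-1}e^{2t\cos(\pi k/m)}
  =\tfrac12\Bigl(\sum_{k=0}^{2m-1}e^{2t\cos(\pi k/m)}+e^{2t}-e^{-2t}\Bigr).
\]
Multiplying by $e^{-2t}$ gives $\tfrac12\,e^{-2t}\sum_{k=0}^{2m-1}e^{2t\cos(2\pi k/(2m))}+\tfrac12(1-e^{-4t})$.

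The Fourier--Bessel roots-of-unity filter used in \Cref{lem:fractional-cycle-heat} holds for every positive integer $m'$, not only $m'\ge3$. Therefore the first term equals $\tfrac12\mathcal H_C(2m,t)$ for all $m\ge1$, including $m=1$, where $2m=2<3$. As a check at $m=1$: the sum gives $\tfrac12(1+e^{-4t})+\tfrac12(1-e^{-4t})=1=h_P(1,t)$.

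\emph{Strict convexity.}
Put $f(u)=\log\bigl(\tfrac12\mathcal H_C(2e^u,t)\bigr)=\log\mathcal H_C(e^{u+\log 2},t)-\log2$. This is a translate of the function in \Cref{lem:fractional-cycle-heat} plus a constant, so $f$ is strictly convex. Let $c=\tfrac12(1-e^{-4t})>0$. Then $\log\mathcal H_P(e^u,t)=\log(e^{f(u)}+c)$.

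Fix $u\ne v$, $0<\lambda<1$, and $w=(1-\lambda)u+\lambda v$. The argument has two steps.
\begin{enumerate}
\item Strict convexity of $f$ gives $e^{f(w)}<e^{(1-\lambda)f(u)}e^{\lambda f(v)}$. Adding $c$ to both sides,
\[
  e^{f(w)}+c<e^{(1-\lambda)f(u)}e^{\lambda f(v)}+c^{1-\lambda}c^{\lambda}.
\]
\item The two-term H\"older inequality $x_1^{1-\lambda}x_2^{\lambda}+y_1^{1-\lambda}y_2^{\lambda}\le(x_1+y_1)^{1-\lambda}(x_2+y_2)^{\lambda}$ bounds the right side by
\[
  (e^{f(u)}+c)^{1-\lambda}(e^{f(v)}+c)^{\lambda}.
\]
\end{enumerate}
Taking logarithms gives strict convexity of $u\mapsto\log\mathcal H_P(e^u,t)$.

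Once \Cref{lem:fractional-cycle-heat} is available, no step is a real obstacle. The only point needing care is the bookkeeping at $m=1$. There the cycle length $2m=2$ falls outside the stated range $m\ge3$ of \Cref{lem:fractional-cycle-heat}, so the proof must invoke the filter identity directly rather than the lemma's integer-agreement clause.
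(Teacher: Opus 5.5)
Your proposal is correct and follows essentially the same route as the paper. The paper also obtains integer agreement by writing $h_P(m,t)$ through the $2m$-point Bessel/roots-of-unity sum, and your explicit folding of the $2m$-grid is exactly what produces its $e^{-2t}\sinh(2t)=\tfrac12(1-e^{-4t})$ term; your remark that the filter holds for every positive integer correctly covers $m=1$. For strict log-convexity, the paper uses the same argument: strict log-convexity of $\tfrac12\mathcal H_C(2e^u,t)$ from the cycle lemma, followed by the two-term H\"older inequality to absorb the added positive constant.
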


\begin{proof}
We first verify the integer specialization.  The same Bessel expansion, now
with the path angles $\pi k/m$, gives
\[
  h_P(m,t)
  =
  e^{-2t}\left[
  m\sum_{q\in\Z}I_{\lvert 2qm\rvert}(2t)+\sinh(2t)
  \right].
\]
Since, by definition,
\[
  \mathcal H_C(2m,t)=2m e^{-2t}\sum_{q\in\Z}I_{\lvert 2qm\rvert}(2t),
\]
we have
\[
  h_P(m,t)=\frac12\mathcal H_C(2m,t)+\frac12(1-e^{-4t}).
\]
Thus $\mathcal H_P$ agrees with $h_P$ at integer side lengths.

The convexity reduces to the cycle case plus a positive constant.  The function
$u\mapsto\log\mathcal H_C(2e^u,t)$ is strictly convex by
\Cref{lem:fractional-cycle-heat}.  Put
$G(u)=\frac12\mathcal H_C(2e^u,t)$ and
$c_t=\frac12(1-e^{-4t})>0$.  If $u\ne v$, $0<\lambda<1$, and
$w=(1-\lambda)u+\lambda v$, then
\[
  G(w)<G(u)^{1-\lambda}G(v)^\lambda.
\]
The two-term H\"older inequality gives
\[
  c_t+G(u)^{1-\lambda}G(v)^\lambda
  \le
  (c_t+G(u))^{1-\lambda}(c_t+G(v))^\lambda.
\]
The first inequality is strict, and the second preserves the comparison after
adding the same positive constant to both endpoint values.  Combining them
proves strict log-convexity of $c_t+G(u)=\mathcal H_P(e^u,t)$.
\end{proof}

\begin{theorem}\label{thm:log-majorization}
Let $\mathbf n=(n_1,\ldots,n_d)$ and $\mathbf m=(m_1,\ldots,m_d)$ be positive
integer tuples with equal product.  If $\mathbf n$ log-majorizes $\mathbf m$,
then
\[
  \tau(\Path_{n_1}\square\cdots\square\Path_{n_d})
  \le
  \tau(\Path_{m_1}\square\cdots\square\Path_{m_d}),
\]
with equality only when the tuples agree up to permutation.  If all $n_i,m_i\ge3$,
then also
\[
  \tau(\Cycle_{n_1}\square\cdots\square\Cycle_{n_d})
  \le
  \tau(\Cycle_{m_1}\square\cdots\square\Cycle_{m_d}),
\]
again with equality only when the tuples agree up to permutation.
\end{theorem}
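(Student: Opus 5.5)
The plan is to combine \Cref{lem:heat-trace-determinant} with Karamata's inequality applied to the strictly convex functions supplied by \Cref{lem:fractional-cycle-heat,lem:fractional-path-heat}. Write $X\in\{P,C\}$. Let $x_i=\log n_i$ and $y_i=\log m_i$, so that $x$ majorizes $y$ and $\sum x_i=\sum y_i$. For each fixed $t>0$, set $\psi_t(u)=\log\mathcal H_X(e^u,t)$. By the two lemmas this function is strictly convex on $\R$, and it agrees with $\log h_X$ at integer side lengths; in the cycle case this needs all sides to be at least $3$. Karamata's inequality then gives
\[
  \sum_{i=1}^d\psi_t(x_i)\ge\sum_{i=1}^d\psi_t(y_i),
\]
and exponentiating together with the factorization of product heat traces yields the pointwise comparison
\[
  \Tr\bigl(e^{-tL(X_{\mathbf n})}\bigr)\ge\Tr\bigl(e^{-tL(X_{\mathbf m})}\bigr)\qquad (t>0).
\]

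Next I apply \Cref{lem:heat-trace-determinant} with $G=X_{\mathbf n}$ and $H=X_{\mathbf m}$. Both graphs are connected and have the same vertex count $\prod n_i=\prod m_i$. This gives
\[
  \log Q(X_{\mathbf m})-\log Q(X_{\mathbf n})=\int_0^\infty\bigl[\Tr(e^{-tL_{G}})-\Tr(e^{-tL_{H}})\bigr]\frac{\dd t}{t}\ge0.
\]
Since $\tau=Q/|V|$ and the vertex counts agree, this is the claimed inequality for both path and cycle products. Convergence of the integral is already part of that lemma, so nothing further is needed there.

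For the equality case, suppose $\mathbf n$ and $\mathbf m$ are not permutations of each other. Then the sorted vectors $x^\downarrow\ne y^\downarrow$. The strict form of Karamata's inequality says that for a strictly convex function, $x$ majorizing $y$ with $x^\downarrow\ne y^\downarrow$ forces a strict inequality. I would justify this by writing $y=Dx$ with $D$ doubly stochastic, or via a finite chain of T-transforms, and noting that at least one transform acts nontrivially, where Jensen's inequality is strict. The strict inequality then holds for every $t>0$, so the integrand is positive on all of $(0,\infty)$ and continuous in $t$, and the integral is strictly positive. Hence $\tau$ is strictly larger for $\mathbf m$. Conversely, if the tuples agree up to permutation, equality is trivial.

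The main obstacle is the pointwise-in-$t$ bookkeeping rather than any new estimate. First, one must confirm that the continuous extensions $\mathcal H_X(e^u,t)$ take the correct values at every coordinate, which is why the cycle case carries the restriction $n_i,m_i\ge3$. Second, one must state the strict Karamata inequality carefully: majorization is defined using decreasing rearrangements with equal total sums, and strictness requires the rearranged vectors to differ. Everything analytic was already absorbed into \Cref{lem:fractional-cycle-heat,lem:fractional-path-heat}.
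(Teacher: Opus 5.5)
Your proposal is correct and follows the paper's proof essentially step for step. For each fixed $t>0$ you apply Karamata's inequality to the strictly convex function $u\mapsto\log\mathcal H_X(e^u,t)$ to get a pointwise heat-trace comparison, then convert it to the $Q$ (hence $\tau$) comparison via \Cref{lem:heat-trace-determinant}; your T-transform (or doubly stochastic) justification of strict Karamata only spells out a step the paper asserts without proof.
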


\begin{proof}
We give the proof simultaneously for $X=P$ and $X=C$, using the cycle
hypotheses in the periodic case.  Fix $t>0$.  By
\Cref{lem:fractional-cycle-heat,lem:fractional-path-heat}, the function
\[
  f_t(u)=\log\mathcal H_X(e^u,t)
\]
is strictly convex and agrees with $\log h_X(m,t)$ at admissible integer side
lengths.  Since $(\log n_i)$ majorizes $(\log m_i)$, Karamata's inequality gives
\[
  \sum_{i=1}^d\log h_X(n_i,t)
  =
  \sum_{i=1}^d f_t(\log n_i)
  \ge
  \sum_{i=1}^d f_t(\log m_i)
  =
  \sum_{i=1}^d\log h_X(m_i,t).
\]
The inequality is strict unless the logarithmic side-length tuples agree up to
permutation.  Therefore
\[
  \Tr(e^{-tL(X_{n_1}\square\cdots\square X_{n_d})})
  =
  \prod_{i=1}^d h_X(n_i,t)
  \ge
  \prod_{i=1}^d h_X(m_i,t)
  =
  \Tr(e^{-tL(X_{m_1}\square\cdots\square X_{m_d})}),
\]
strictly for every $t>0$ unless the tuples agree up to permutation.

Apply \Cref{lem:heat-trace-determinant} with
$G=X_{n_1}\square\cdots\square X_{n_d}$ and
$H=X_{m_1}\square\cdots\square X_{m_d}$.  The equal-product assumption gives
equal vertex counts.  If the tuples are not permutations of one another, then
\[
  \log Q(X_{m_1}\square\cdots\square X_{m_d})
  -
  \log Q(X_{n_1}\square\cdots\square X_{n_d})
  =
  \int_0^\infty
  \left[
  \prod_{i=1}^d h_X(n_i,t)-\prod_{i=1}^d h_X(m_i,t)
  \right]\frac{\dd t}{t}
  >0.
\]
Dividing by the common Matrix--Tree normalization preserves the strict
inequality for $\tau$.  If the tuples agree up to permutation, the product
graphs are isomorphic.  Thus $\log\tau$ is Schur-concave in the logarithmic side
lengths on the admissible pure product tuples.
\end{proof}

\begin{corollary}\label{thm:perfect-power-heat}
Let $n_1,\ldots,n_d,n$ be positive integers with $n_1\cdots n_d=n^d$.
Then
\[
  \tau(\Path_n^{\square d})
  \ge
  \tau(\Path_{n_1}\square\cdots\square\Path_{n_d}),
\]
with equality only when $(n_1,\ldots,n_d)=(n,\ldots,n)$.
If $n\ge3$ and all $n_i\ge3$, then
\[
  \tau(\Cycle_n^{\square d})
  \ge
  \tau(\Cycle_{n_1}\square\cdots\square\Cycle_{n_d}),
\]
again with equality only when $(n_1,\ldots,n_d)=(n,\ldots,n)$.
\end{corollary}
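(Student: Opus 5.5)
The plan is to deduce the corollary directly from \Cref{thm:log-majorization}. To do so, it suffices to show that every admissible tuple $\mathbf n=(n_1,\ldots,n_d)$ with $n_1\cdots n_d=n^d$ log-majorizes the uniform tuple $(n,\ldots,n)$. Set $x_i=\log n_i$, so that $\sum_i x_i=d\log n$. The uniform vector $(\log n,\ldots,\log n)$ is majorized by every vector with the same sum. The reason is as follows. If $x^\downarrow$ is the decreasing rearrangement, then for each $1\le k<d$ the average of the $k$ largest entries is at least the overall average $\log n$. Hence
\[
  \sum_{i=1}^k x_i^\downarrow\ \ge\ k\log n,
\]
with equality at $k=d$. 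This is exactly the definition of log-majorization given before \Cref{thm:log-majorization}.

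Next, apply \Cref{thm:log-majorization} with $\mathbf m=(n,\ldots,n)$. In the path case, all positive integers are admissible, so we get
\[
  \tau(\Path_{n_1}\square\cdots\square\Path_{n_d})\le\tau(\Path_n^{\square d}),
\]
with equality only when $\mathbf n$ is a permutation of $(n,\ldots,n)$, that is, only when $\mathbf n=(n,\ldots,n)$. In the cycle case, the hypotheses $n\ge3$ and $n_i\ge3$ are precisely the admissibility conditions of the periodic half of \Cref{thm:log-majorization}, so the same argument gives the inequality for $\Cycle_n^{\square d}$ with the same equality characterization.

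There is no real obstacle here. The only points to check are that the majorization direction matches the convention in the theorem (the more spread tuple $\mathbf n$ has fewer spanning trees) and that the equality case "agree up to permutation" collapses to literal equality because the comparison tuple is constant. I would also note the degenerate case $d=1$: the product condition then forces $n_1=n$, so the statement is trivial.
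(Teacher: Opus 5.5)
Your proposal is correct and matches the paper's proof. The paper also observes that the logarithmic side-length vector has average $\log n$ and therefore majorizes $(\log n,\ldots,\log n)$, then applies \Cref{thm:log-majorization} with $\mathbf m=(n,\ldots,n)$; the equality case collapses to $\mathbf n=(n,\ldots,n)$ because the comparison tuple is constant, and your averaging argument for the partial sums simply spells out the majorization step the paper states in one line.
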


\begin{proof}
The logarithmic side-length vector has average $\log n$, so it majorizes $(\log n,\ldots,\log n)$.  Apply \Cref{thm:log-majorization}.  Equality forces agreement up to permutation with the constant tuple, hence all side lengths are equal to $n$.
\end{proof}

\section{Discussion}\label{sec:discussion}

The three two-dimensional free and periodic products split into two behaviors.  Rectangles with two free factors and tori with two periodic factors are balanced by moving the two side lengths toward each other at fixed product.  The cylinder is asymmetric: the cyclic circumference is a boundary-sensitive variable, and in the divisor-rich case the optimizer has scale $N^{1/3}$ rather than $N^{1/2}$ when divisors are available near that scale.

The arbitrary-dimensional theorems are intentionally pure free and pure periodic statements.  For products of paths, and separately for products of cycles with lengths at least $3$, every nontrivial balancing of a pair of coordinates increases the spanning-tree count.  This proves pairwise balancing of fixed-volume maximizers and gives uniqueness whenever the terminal pairwise-balanced tuple is unique.  The heat-trace argument gives a stronger comparison: within each pure product class, $\log\tau$ is Schur-concave in logarithmic side lengths.  Thus a tuple that is less spread out in the logarithmic majorization order has at least as many spanning trees.  This proves the uniform hypercube and hypertorus are the unique maximizers in the pure free and pure periodic classes at volume $n^d$.  For general $N$ in dimension $d\ge3$, we still do not assert a unique closest-to-hypercube maximizer, because the logarithmic majorization order is only a partial order on divisor tuples.

These product results suggest a higher-dimensional perfect-power form of the square-maximality conjecture for induced grid subgraphs.  Let $\mathcal L_d$ be the nearest-neighbor graph on $\Z^d$.

\begin{conjecture}\label{conj:lattice-product-extremal}
Let $d\ge2$ and $n\ge1$.  If $S\subset\Z^d$ has $|S|=n^d$ and the induced graph $\mathcal L_d[S]$ is connected, then
\[
  \tau(\mathcal L_d[S])
  \le
  \tau(\Path_n^{\square d}).
\]
Equality should occur only for the $d$-dimensional box $\Path_n^{\square d}$, up to lattice translation and coordinate permutation.
\end{conjecture}

For $d=2$ this specializes to the square-maximality conjecture recorded by Procaccia and Tucker-Foltz~\cite{ProcacciaTuckerFoltz2022}.  For non-perfect-power $N$, one still expects a maximizing induced grid subgraph to be a compact near-box shape, but the correct finite shape may be non-product.  Tapp's boundary-sensitive estimates support this compactness principle, but the product-grid comparisons here show that exact spanning-tree maximality requires more than edge-count or boundary-size control.

There is also a natural periodic analogue, although the correct ambient class is less canonical than connected induced subgraphs of $\Z^d$.  One finite-quotient formulation is the following.  Let $\Lambda\subset\Z^d$ be a full-rank sublattice, let $S\subset\Z^d$ be $\Lambda$-periodic, and let $\mathcal L_d[S]/\Lambda$ be the corresponding finite simple quotient graph.  This includes rectangular tori $\Cycle_{m_1}\square\cdots\square\Cycle_{m_d}$ by taking $S=\Z^d$ and $\Lambda=m_1\Z e_1\oplus\cdots\oplus m_d\Z e_d$.

\begin{conjecture}\label{conj:periodic-product-extremal}
Let $d\ge2$ and $n\ge3$.  Among connected quotient graphs $\mathcal L_d[S]/\Lambda$ with $n^d$ vertex orbits, the spanning-tree count is at most
\[
  \tau(\Cycle_n^{\square d}).
\]
Equality should occur only for the standard product torus $\Cycle_n^{\square d}$, up to the natural quotient symmetries.
\end{conjecture}

For general periodic volumes, the right finite benchmark is left open.  Product tori remain the natural comparison class, and inside that subclass \Cref{thm:log-majorization} identifies the most balanced admissible tori whenever the relevant logarithmic majorization comparison decides the divisor tuples.  The broader periodic conjecture should be viewed more cautiously than \Cref{conj:lattice-product-extremal}: different choices of the ambient periodic class may lead to slightly different formulations, but the pure cycle products provide the natural benchmark.

Mixed products with both free and periodic factors in higher dimension are not classified here for structural reasons.  Pairwise balancing would have to compare a free factor with a periodic factor, and the corresponding two-dimensional slice is a massive cylinder; the cylinder analysis shows that this slice is asymmetric rather than closest-to-square monotone.  The heat-trace proof also uses the pure structure.  For pure products, Karamata's inequality acts on a sum of identical functions, either all $\log\mathcal H_P(e^u,t)$ or all $\log\mathcal H_C(e^u,t)$.  In a mixed product the constrained real-side-length heat-trace objective is instead a sum of distinct strictly convex functions.  For each fixed $t$ this still gives a unique derivative-matching minimizer in logarithmic side lengths, but that minimizer is not forced by symmetry and need not be independent of $t$.  Thus both methods stop at the same boundary: pure products are governed by symmetry, while mixed products require isolating competing analytic penalties.

\bibliographystyle{alpha}
\bibliography{ref}

\end{document}